\documentclass[12pt]{amsart}
\usepackage[margin=1.15in]{geometry}
\usepackage{amscd, amssymb, amsmath, wasysym}
\usepackage{graphicx}
\usepackage{amsfonts}
\usepackage{mathrsfs}    
\usepackage{eucal}     
\usepackage{latexsym}   
\usepackage{verbatim}   
\usepackage[all]{xy}   
\usepackage[dvipsnames]{xcolor}
\usepackage{bookmark}
\usepackage{subcaption}

\usepackage{microtype}
\usepackage{tikz-cd}

\usepackage{hyperref}
\hypersetup{
	colorlinks=true,
	linkcolor=NavyBlue,
	filecolor=NavyBlue,
	citecolor = TealBlue,
	urlcolor=magenta,
}

\newcounter{thmcounter}

\numberwithin{equation}{section}
\numberwithin{thmcounter}{section}

\theoremstyle{theorem}
\newtheorem{theoremalpha}{Theorem}
\newtheorem{propositionalpha}[theoremalpha]{Proposition}

\newtheorem*{EKS}{Eisenbud--Koh--Stillman Conjecture}
\newtheorem*{Sidman-Smith}{Sidman--Smith Conjecture}
\newtheorem*{HLMP}{Han--Lee--Moon--Park Conjecture}

\newtheorem{theorem}[thmcounter]{Theorem}
\newtheorem{proposition}[thmcounter]{Proposition}
\newtheorem{lemma}[thmcounter]{Lemma}
\newtheorem{corollary}[thmcounter]{Corollary}

\theoremstyle{definition}

\newtheorem{remark}[thmcounter]{Remark}

\newtheoremstyle{claim}{9pt}{3pt}{}{\parindent}{\bf}{.}{1em}{}

\theoremstyle{claim}

\newcommand{\thistheoremname}{}
\newtheorem*{genericthm*}{\thistheoremname}
\newenvironment{namedthm*}[1]
{\renewcommand{\thistheoremname}{#1}%
	\begin{genericthm*}}
	{\end{genericthm*}}
\newenvironment{namedtheorem*}[1]
{\renewcommand{\thistheoremname}{#1}%
	\begin{genericthm*}}
	{\end{genericthm*}}


\newenvironment{namelist}[1]{%
	\begin{list}{}
		{
			\settowidth{\labelwidth}{#1}%
			\setlength{\labelsep}{0.3em}%
			\setlength{\leftmargin}{\labelwidth}%
			\addtolength{\leftmargin}{\labelsep}}}{%
\end{list}}


\newcommand{\nZ}{\mathbf{Z}}                   
               
\newcommand{\nC}{\mathbf{C}}                     
           
\newcommand{\kk}{\mathbf{k}}         

\newcommand{\nP}{\mathbf{P}}                     
\newcommand{\nA}{\mathbf{A}}

\newcommand{\sO}{\mathscr{O}}

\DeclareMathOperator{\id}{id}

\DeclareMathOperator{\pr}{pr}

\DeclareMathOperator{\res}{res}

\DeclareMathOperator{\Spec}{Spec}

\DeclareMathOperator{\reg}{reg}


\newcommand*{\longhookrightarrow}{\ensuremath{\lhook\joinrel\relbar\joinrel\rightarrow}}

\newcommand*{\longtwoheadrightarrow}{\ensuremath{\joinrel\relbar\joinrel\twoheadrightarrow}}



\newcounter{rkcounter}             
\setcounter{rkcounter}{0}          




\begin{document}

\title[Determinantal ideals of secant varieties]{Determinantal ideals of secant varieties}

\author{Daniele Agostini}
\address{Fachbereich Mathematik, University of T\"ubingen, Auf der Morgestelle 10, T\"ubingen 72072, Germany}
\email{daniele.agostini@uni-tuebingen.de}

\author{Jinhyung Park}
\address{Department of Mathematical Sciences, KAIST, 291 Daehak-ro, Yuseong-gu, Daejeon 34141, Republic of Korea}
\email{parkjh13@kaist.ac.kr}

\date{\today}

\begin{abstract} 
Using Hilbert schemes of points, we establish a number of results for a smooth projective variety $X$ in a sufficiently ample embedding. If $X$ is a curve or a surface, we show that the ideals of higher secant varieties are determinantally presented, and we prove the same for the first secant variety if $X$ has arbitrary dimension. This completely settles a conjecture of Eisenbud--Koh--Stillman for curves and partially resolves a conjecture of Sidman--Smith in higher dimensions. If $X$ is a curve or a surface we also prove that the corresponding embedding of the Hilbert scheme of points $X^{[d]}$ into the Grassmannian is projectively normal. Finally, if $X$ is an arbitrary projective scheme in a sufficiently ample embedding, then we demonstrate that its homogeneous ideal is generated by quadrics of rank three, confirming a conjecture of Han--Lee--Moon--Park. Along the way, we check that the Hilbert scheme of three points on a smooth variety is the blow-up of the symmetric product along the big diagonal. 
\end{abstract}

\maketitle


\section*{Conventions}
\noindent Throughout the paper, we work over an algebraically closed field $\kk$ of characteristic zero. When $V$ is a $\kk$-vector space, $\nP(V)$ is the space of hyperplanes in $V$. For a projective scheme $X \subseteq \nP (H^0(X, L))$ embedded by the complete linear system of a very ample line bundle $L$ on $X$, we denote by $I(X)=I(X,L)$ the homogeneous ideal defining $X$ in $\nP(H^0(X,L))$. We say that a  property $\mathscr{P}$ holds for a \emph{sufficiently ample line bundle} on a fixed projective scheme $X$ over $\kk$ if there is a line bundle $L_0$ on $X$ such that $\mathscr{P}$ holds for any line bundle $L = L_0\otimes A$ with $A$ ample.

\section{Introduction}

\noindent Let $X$ be a smooth projective variety with an embedding $\varphi\colon X \hookrightarrow \mathbf{P}^r := \mathbf{P}(H^0(X,L))$ given by the complete linear system of a very ample line bundle $L$ on $X$. For an integer $k\geq 0$, the $k$-th \emph{secant variety} $\Sigma_k(X)=\Sigma_k(X,L)$ is the closure of the union of all $k$-planes spanned by $k+1$ distinct points on $X$. We have natural inclusions
\[ 
X = \Sigma_0(X) \subseteq \Sigma_1(X) \subseteq \Sigma_2(X) \subseteq \cdots.
\]
An important topic in the study of secant varieties is the search of an explicit set of equations for them. It is well known that $\Sigma_k(X)$ is not contained in hypersurfaces of degree $k+1$, i.e., $I(\Sigma_k(X,L))_{k+1}=0$  (see e.g., \cite{CJ01}). On the other hand, there is a natural set of determinantal equations of degree $k+2$ constructed as follows. Assume that $L = A\otimes B$ for two line bundles $A,B$ on $X$. Then there is a linear map
\begin{equation}\label{eq:mABk+2} 
	m_{A,B}^{k+2}\colon  \wedge^{k+2}H^0(X,A)\otimes \wedge^{k+2}H^0(X,B) \longrightarrow  I(\Sigma_k(X,L))_{k+2}
\end{equation}
given by 
\[
(s_1 \wedge \cdots \wedge s_{k+2}) \otimes (t_1 \wedge \cdots \wedge t_{k+2}) \longmapsto  \sum_{\sigma \in \mathfrak{S}_{k+2}} \varepsilon_{k+2}(\sigma) m_{A,B}^1(s_{\sigma(1)} \otimes t_1) \cdots m_{A,B}^1(s_{\sigma(k+2)} \otimes t_{k+2}).
\]
where
$$
m_{A,B}^1 \colon H^0(X, A) \otimes H^0(X, B) \longrightarrow H^0(X, L)
$$
is the usual multiplication map on $X$ and 
$$
\varepsilon_{k+2} \colon \mathfrak{S}_{k+2} \longrightarrow \{\pm 1\}
$$
is the alternating character of the symmetric group $\mathfrak{S}_{k+1}$. The image of the map $m^{k+2}_{A,B}$ in \eqref{eq:mABk+2} is generated by the $(k+2) \times (k+2)$-minors of the  \emph{catalecticant matrix} 
\begin{equation}\label{eq:omega}
	\operatorname{Cat}(A,B):=\begin{pmatrix}
		m_{A,B}^1(\alpha_1 \otimes \beta_1) & m_{A,B}^1(\alpha_1 \otimes \beta_2) & \cdots & m_{A,B}^1(\alpha_1 \otimes \beta_b) \\
		m_{A,B}^1(\alpha_2 \otimes \beta_1) & m_{A,B}^1(\alpha_2 \otimes \beta_2)  & \cdots & m_{A,B}^1(\alpha_2 \otimes \beta_b) \\
		\vdots & \vdots & \ddots & \vdots \\
		m_{A,B}^1(\alpha_a \otimes \beta_1)  & m_{A,B}^1(\alpha_a \otimes \beta_2)  & \cdots & m_{A,B}^1(\alpha_a \otimes \beta_b) 
	\end{pmatrix}
\end{equation}
which is an $a \times b$ matrix of linear forms, where $\alpha_1, \ldots, \alpha_a$ is a basis of $H^0(X, A)$, and $\beta_1, \ldots, \beta_b$ is a basis of $H^0(X, B)$. 
This matrix is $1$-generic, and it is straightforward to see that $\operatorname{Cat}(A,B)$ has rank at most $1$ on $X$ so that it has rank at most $k+1$ on $\Sigma_k(X)$ (see \cite[Proposition 6.10]{Eis2005}). Thus its $(k+2)\times (k+2)$-minors are indeed in the homogeneous ideal $I(\Sigma_{k}(X,L))$. Notice that if $I(\Sigma_{k}(X,L))$ is generated in degree $k+2$, then $I(\Sigma_{k}(X,L))$ is generated by $(k+2)\times (k+2)$-minors of $\operatorname{Cat}(A,B)$ if and only if $m_{A,B}^{k+2}$ is surjective. If there is a factorization $L=A\otimes B$ such that $I(\Sigma_k(X,L))$ is generated by the $(k+2) \times (k+2)$-minors of $\operatorname{Cat}(A,B)$, we say that the ideal $I(\Sigma_k(X,L))$ is \emph{determinantally presented}. When $X=C$ is a smooth curve, it was conjectured by Eisenbud--Koh--Stillman \cite[Remark in page 518]{EKS1988} that this is the case if the degree of $L$ is large enough with respect to the genus $g=g(C)$ and the order $k$ of the secant variety. 

\begin{EKS}
Let $C$ be a smooth projective curve of genus $g$. If $A,B$ are two line bundles of sufficiently large degree with respect to $g$ and $k$ on $C$ and $L:=A \otimes B$, then the ideal $I(\Sigma_k(C,L))$ is generated by the $(k+2) \times (k+2)$-minors of $\operatorname{Cat}(A,B)$. In particular, $I(\Sigma_k(C,L))$ is determinantally presented if $\deg L$ is large enough.
\end{EKS}

This was proven for the case $k=0$ by Eisenbud--Koh--Stillman \cite[Theorem 1]{EKS1988}, where the $0$-th secant variety is the curve itself, i.e., $\Sigma_0(C,L)=C$. In the case of an arbitrary secant variety $\Sigma_k(C)$, this was known for the case $g=0$ by Wakerling (see \cite[Remark in page 518]{EKS1988}) and the case $g=1$ by Fisher \cite[Theorem 1.3]{Fisher}. In general, this was only verified set-theoretically by Ravi \cite{Ravi} and scheme-theoretically by Ginensky \cite[Theorem 7.4]{Gin2010}. On the other hand, it was shown by Ein--Niu--Park \cite[Theorem 1.2]{ENP} that the ideal $I(\Sigma_k(C,L))$ is generated in degree $k+2$ as soon as $\deg L \geq 2g+2k+2$. Our first main result is a complete solution to the strongest ideal-theoretic form of the Eisenbud--Koh--Stillman conjecture.\footnote{While we were preparing this paper, we learned that Junho Choe also had independently obtained a similar result (see \cite[Theorem 1.4]{Choe25+}).}

\begin{theoremalpha}\label{thm:EKS}
Let $C$ be a smooth projective curve of genus $g$, and $A,B$ be two line bundles on $C$ with $\deg(A),\deg(B)\geq 2g+k+1$ such that $A \not\cong B$ when $g > 0$ and $\deg (A)=\deg(B)=2g+k+1$. Then the ideal $I(\Sigma_k(C,A\otimes B))$ is generated by the $(k+2) \times (k+2)$-minors of $\operatorname{Cat}(A,B)$. Furthermore, if $L$ is a line bundle on $C$ with
\[
\deg(L) \geq  \min\{ 4g+2k+2,3g+3k+4\}, 
\]
then the ideal $I(\Sigma_k(C,L))$ is determinantally presented.
\end{theoremalpha}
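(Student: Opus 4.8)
The statement has two parts: the determinantal presentation of $I(\Sigma_k(C,A\otimes B))$ under the stated hypotheses on $A$ and $B$, and its consequence for an arbitrary line bundle $L$.

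\emph{Reduction to a surjectivity.} Write $L=A\otimes B$ and set $V_A:=H^0(C,A)$, $V_B:=H^0(C,B)$, $V:=H^0(C,L)$. Since $\deg L=\deg A+\deg B\ge 2(2g+k+1)\ge 2g+2k+2$, the ideal $I(\Sigma_k(C,L))$ is generated in degree $k+2$ by Ein--Niu--Park \cite[Theorem 1.2]{ENP}. By the remark preceding the theorem, it then suffices to prove that $m^{k+2}_{A,B}$ from \eqref{eq:mABk+2} is surjective onto $I(\Sigma_k(C,L))_{k+2}$, i.e.\ that the $(k+2)\times(k+2)$-minors of $\operatorname{Cat}(A,B)$ span $I(\Sigma_k(C,L))_{k+2}$.

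\emph{Cohomology on the Hilbert scheme.} Let $\mathcal Z\subseteq C^{[k+1]}\times C$ be the universal subscheme with projections $p$ and $q$, and for a line bundle $N$ on $C$ put $E_N:=p_*q^*N$, a rank-$(k+1)$ bundle on $C^{[k+1]}$; when $\deg N\ge 2g+k$ the evaluation $H^0(C,N)\otimes\mathcal O_{C^{[k+1]}}\to E_N$ is surjective, with kernel the secant syzygy bundle $M_N$. In the range at hand the secant bundle $\mathbf{P}(E_L)$ resolves $\Sigma_k(C,L)$ via a morphism $\beta$ with $\beta_*\mathcal O=\mathcal O_{\Sigma_k}$ and $R^{>0}\beta_*\mathcal O=0$ (Ein--Niu--Park \cite{ENP}); hence $I(\Sigma_k(C,L))_{k+2}=\ker(\operatorname{Sym}^{k+2}V\to H^0(C^{[k+1]},\operatorname{Sym}^{k+2}E_L))=H^0(C^{[k+1]},\mathcal K)$, where $\mathcal K:=\ker(\operatorname{Sym}^{k+2}V\otimes\mathcal O_{C^{[k+1]}}\to\operatorname{Sym}^{k+2}E_L)$. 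The sheaf $\mathcal K$ carries the canonical filtration of $\operatorname{Sym}^{k+2}$ of the extension $0\to M_L\to V\otimes\mathcal O\to E_L\to 0$, with graded pieces $\operatorname{Sym}^i M_L\otimes\operatorname{Sym}^{k+2-i}E_L$ for $1\le i\le k+2$ and leading quotient $M_L\otimes\operatorname{Sym}^{k+1}E_L$. Multiplication on $\mathcal Z$ gives a surjection $E_A\otimes E_B\twoheadrightarrow E_L$, and since $\operatorname{rk}E_A=k+1$ forces $\wedge^{k+2}E_A=0$ there is an Eagon--Northcott surjection $\wedge^{k+2}V_A\otimes\mathcal O\twoheadrightarrow M_A\otimes\det E_A$ (and likewise for $B$); combining these, one checks that $m^{k+2}_{A,B}$ factors, compatibly with the filtrations, through $H^0$ of the leading piece $M_L\otimes\operatorname{Sym}^{k+1}E_L$.

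\emph{Reduction to vanishing theorems.} It thus suffices to prove: (i) $H^0(C^{[k+1]},\operatorname{Sym}^i M_L\otimes\operatorname{Sym}^{k+2-i}E_L)=0$ for $2\le i\le k+2$, together with the analogous vanishing of $H^1$ --- which forces $I(\Sigma_k(C,L))_{k+2}=H^0(\mathcal K)$ to be $H^0(C^{[k+1]},M_L\otimes\operatorname{Sym}^{k+1}E_L)$; and (ii) that the image of $m^{k+2}_{A,B}$ exhausts this group, which, through the factorization above, unwinds into surjectivity of $\wedge^{k+2}V_A\to H^0(M_A\otimes\det E_A)$ (and its analogue for $B$), of the relevant multiplication maps $H^0(\,\cdot\,)\otimes H^0(\,\cdot\,)\to H^0(\,\cdot\otimes\cdot\,)$, and of the sheaf map realizing $M_L\otimes\operatorname{Sym}^{k+1}E_L$ as a quotient of the bundle assembled from $M_A,E_A,M_B,E_B$ --- the last again modulo an $H^1$-vanishing. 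Because $C^{[k+1]}=\operatorname{Sym}^{k+1}C$ is a quotient of $C^{k+1}$ and all the bundles occurring are built from tautological data, each of these statements reduces, via the K\"unneth formula, to an explicit $H^1$-vanishing on $C$ for symmetric and tensor powers of $A$, $B$ and $L$. The same circle of ideas underlies the projective normality of $C^{[k+1]}$ in the Grassmannian announced in the introduction.

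\emph{The main obstacle, and the general $L$.} The crux is to run these vanishings in the \emph{sharp} range $\deg A,\deg B\ge 2g+k+1$ rather than merely for $\deg A,\deg B\gg 0$: the value $2g+k+1$ is exactly where the governing $H^1$ on $C$ dies, and when $\deg A=\deg B=2g+k+1$ with $A\cong B$ the diagonal symmetry produces an extra section (dually a nonzero $H^1$) that breaks the argument --- hence the exclusion of that case. For the final assertion, given $L$ with $\deg L\ge\min\{4g+2k+2,\,3g+3k+4\}$, one exhibits a factorization $L=A\otimes B$ to which the above applies. If $\deg L\ge 4g+2k+2$, split with $\deg A,\deg B\ge 2g+k+1$, choosing $A$ with $A^{\otimes 2}\not\cong L$ in the boundary case $\deg L=4g+2k+2$ --- possible because $L$ has only finitely many square roots while $\Pic^{2g+k+1}(C)$ is positive-dimensional for $g\ge 1$, the condition being vacuous for $g=0$. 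The bound $3g+3k+4$ is reached by permitting an unbalanced factorization together with a correspondingly refined form of the vanishing package. Taking the smaller of the two bounds proves the theorem.
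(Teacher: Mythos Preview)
Your proposal outlines a plausible framework but does not constitute a proof: the steps you label ``one checks,'' ``unwinds into,'' and ``reduces via K\"unneth'' are precisely where the content lies, and you never carry any of them out. In particular, your claim (i) that $H^0(C^{[k+1]},\operatorname{Sym}^i M_L\otimes\operatorname{Sym}^{k+2-i}E_L)=0$ for $i\ge 2$ is neither proved nor obviously true (and is not how the paper identifies $I(\Sigma_k)_{k+2}$); your assertion that $m^{k+2}_{A,B}$ factors through $H^0$ of the leading graded piece is unsupported; and the sentence ``the crux is to run these vanishings in the sharp range'' acknowledges the real difficulty without addressing it. The sharp numerics cannot be obtained by a soft K\"unneth reduction: the paper's proof hinges on a specific inequality, $h^1(C,B\otimes A^{-1})\le r(A)-k-2$, which is fed into Green's vanishing theorem after a nontrivial lifting isomorphism $H^i(C^{[k+2]},S^iM_{k+2,B}\otimes N_{k+2,A})\cong H^i(C^{[i]},\wedge^{k+2}M_{i,A}\otimes S_{i,B})$ and an inductive descent along $C^{[i]}\to C^{[i-1]}$. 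None of this machinery appears in your argument.

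The paper's actual route is also structurally different from yours. It works on $C^{[k+2]}$, not $C^{[k+1]}$: the identification $I(\Sigma_k(C,L))_{k+2}\cong H^0(C^{[k+2]},A_{k+2,L})$ recasts $m^{k+2}_{A,B}$ as the multiplication map $H^0(N_{k+2,A})\otimes H^0(N_{k+2,B})\to H^0(A_{k+2,L})$, whose surjectivity follows from the vanishings $H^i(C^{[k+2]},S^iM_{k+2,B}\otimes N_{k+2,A})=0$ for $1\le i\le k+2$, established via the lifting technique just mentioned. For the bound $3g+3k+4$ the paper makes an explicit unbalanced choice: $A=\sO_C(p_1+\cdots+p_a)$ for $a=\deg L-(2g+k+1)\ge g+2k+3$ general points and $B=L\otimes A^{-1}$, so that $h^0(C,B\otimes A^{-1})=h^0(C,L(-2p_1-\cdots-2p_a))=0$ by generality, which verifies the key inequality. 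Your treatment of this bound (``a correspondingly refined form of the vanishing package'') gives no indication of what that refinement is or why it yields $3g+3k+4$.
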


We can actually give more precise conditions on $A,B$ to satisfy the theorem (see Theorem \ref{thm:precisethmA} and Corollary \ref{cor:precisethmA}). Our  bounds are comparable with those of \cite{EKS1988,Fisher, Gin2010, Ravi}. Especially, \cite[Theorem 1]{EKS1988} says that $I(C,L)$ is determinantally presented when $\deg L \geq 4g+2$ (this is the case $k=0$ in our theorem). Our result shows that $\deg L \geq 3g+4$ is sufficient in this case. This already significantly improves the previous result as soon as $g \geq 3$. 
Our theorem would be very useful in finding explicit equations for algebraic curves and their secant varieties.

\medskip

Theorem \ref{thm:EKS} can be generalized to secant varieties of smooth projective surfaces and to the first secant varieties of arbitrary smooth projective varieties without the explicit effective conditions on $A,B$. This is our second main result:

\begin{theoremalpha}\label{thm:SS}
Let $X$ be a smooth projective variety of dimension $n$. Assume that $n\leq 2$ or $k\leq 1$. If $L=A\otimes B$ is a line bundle on $X$ with $A,B$ sufficiently ample, then the ideal $I(\Sigma_k(X,L))$ is generated by  the $(k+2) \times (k+2)$-minors of $\operatorname{Cat}(A,B)$. In particular, the ideal $I(\Sigma_k(X,L))$ is determinantally presented whenever $L$ is sufficiently ample.
\end{theoremalpha}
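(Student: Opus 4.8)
The plan is to use the Hilbert scheme of points to resolve $\Sigma_k(X)$, to translate the ideal-theoretic assertion into cohomology on a smooth variety, and then to invoke vanishing theorems; the hypotheses $n\le 2$ or $k\le 1$ are exactly what keeps the relevant Hilbert schemes smooth. Set $d=k+1$. Recall $X^{[m]}$ is smooth whenever $m\le 3$ or $\dim X\le 2$, so under our assumptions both $X^{[k+1]}$ and $X^{[k+2]}$ are smooth; for $k=1$ one uses in addition that $X^{[3]}$ is the blow-up of $\Sym^3 X$ along the big diagonal, as stated in the abstract. For $L$ sufficiently ample the tautological sheaves $M^{[m]}:=p_*q^*M$ attached to the universal family $X^{[m]}\xleftarrow{p}\Xi_m\xrightarrow{q}X$ are vector bundles of rank $m$ with $H^0(L)\otimes\mathcal O_{X^{[m]}}\twoheadrightarrow L^{[m]}$, and the induced morphism $\beta\colon B:=\mathbf P_{X^{[k+1]}}(L^{[k+1]})\to\mathbf P(H^0(L))$ (with $\mathcal O_B(1)=\beta^*\mathcal O(1)$) has image $\Sigma_k(X,L)$, is a resolution of singularities, and satisfies $\beta_*\mathcal O_B=\mathcal O_{\Sigma_k(X,L)}$ and $R^{>0}\beta_*\mathcal O_B=0$ (for curves this is Ein--Niu--Park; the general sufficiently ample case is analogous, or may be quoted). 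Writing $\pi\colon B\to X^{[k+1]}$ for the bundle projection, $\pi_*\mathcal O_B(j)=\Sym^j L^{[k+1]}$, so that
\[
I(\Sigma_k(X,L))_j=\ker\big(\Sym^j H^0(L)\longrightarrow H^0(X^{[k+1]},\Sym^j L^{[k+1]})\big)\qquad\text{for every }j;
\]
the same argument applied to $\Sigma_{k+1}$ on $X^{[k+2]}$, together with the known fact $I(\Sigma_{k+1}(X,L))_{k+2}=0$, shows that $\Sym^{k+2}H^0(L)\to H^0(X^{[k+2]},\Sym^{k+2}L^{[k+2]})$ is injective. Moreover $I(\Sigma_k(X,L))$ is generated in degree $k+2$: this follows from a Castelnuovo--Mumford regularity bound for $\Sigma_k(X,L)\subseteq\mathbf P^r$ which, via $\beta$, reduces to the vanishing of $H^{\ge 1}(X^{[k+1]},\Sym^j L^{[k+1]})$ for $L$ sufficiently ample, combined with $I(\Sigma_k(X,L))_{\le k+1}=0$. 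It remains to prove $I(\Sigma_k(X,L))_{k+2}=\im(m_{A,B}^{k+2})$.

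For this I would restrict degree-$(k+2)$ forms to spans of length-$(k+2)$ schemes. For $L$ sufficiently ample every $\xi\in X^{[k+2]}$ imposes independent conditions on $L$, so $\langle\xi\rangle\cong\mathbf P^{k+1}$; for general $\xi$ the secant variety $\Sigma_k(\xi)$ of the $k+2$ ``points'' is the union of the $k+2$ hyperplanes $\langle\xi\smallsetminus\{x\}\rangle$, so $I(\Sigma_k(\xi))_{k+2}\subseteq\Sym^{k+2}H^0(L|_\xi)$ is the line spanned by the product of the corresponding $k+2$ linear forms. As every $k$-secant $k$-plane of $X$ appears as such a coordinate hyperplane for a suitable $\xi$, while $\Sigma_k(\xi)\subseteq\Sigma_k(X)$, one deduces that $F\in I(\Sigma_k(X,L))_{k+2}$ if and only if $F|_{\langle\xi\rangle}\in I(\Sigma_k(\xi))_{k+2}$ for all such $\xi$, that is, if and only if the image $s_F\in H^0(X^{[k+2]},\Sym^{k+2}L^{[k+2]})$ of $F$ is a section of the line subbundle $\mathcal M\subseteq\Sym^{k+2}L^{[k+2]}$ whose fibre over $\xi$ is $I(\Sigma_k(\xi))_{k+2}$ (extended over $X^{[k+2]}$ as a saturated subsheaf, hence a subbundle). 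A local computation — the boundary bookkeeping being handled through the structure of $X^{[k+2]}$, and for $k=1$ through the blow-up description of $X^{[3]}$ — identifies $\mathcal M$ with $\det A^{[k+2]}\otimes\det B^{[k+2]}$ compatibly with $m_{A,B}^{k+2}$, in the sense that $\im(m_{A,B}^{k+2})$ maps into $H^0(\mathcal M)$ and the composite $\wedge^{k+2}H^0(A)\otimes\wedge^{k+2}H^0(B)\xrightarrow{m_{A,B}^{k+2}}\Sym^{k+2}H^0(L)\to H^0(X^{[k+2]},\Sym^{k+2}L^{[k+2]})$ agrees with
\[
\wedge^{k+2}H^0(A)\otimes\wedge^{k+2}H^0(B)\longrightarrow H^0(\det A^{[k+2]})\otimes H^0(\det B^{[k+2]})\longrightarrow H^0(\mathcal M)\hookrightarrow H^0(X^{[k+2]},\Sym^{k+2}L^{[k+2]}),
\]
the first arrow being the tensor product of the Plücker maps of $X^{[k+2]}\hookrightarrow\operatorname{Gr}(k+2,H^0(A))$ and $X^{[k+2]}\hookrightarrow\operatorname{Gr}(k+2,H^0(B))$, the second multiplication of sections.

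Granting this, the theorem follows formally. The inclusion $\im(m_{A,B}^{k+2})\subseteq I(\Sigma_k(X,L))_{k+2}$ is classical. Conversely, given $F\in I(\Sigma_k(X,L))_{k+2}$ we have $s_F\in H^0(X^{[k+2]},\mathcal M)$; if the last displayed composite is surjective onto $H^0(\mathcal M)$, pick $\omega$ mapping to $s_F$, so that $F-m_{A,B}^{k+2}(\omega)$ maps to $0$ in $H^0(X^{[k+2]},\Sym^{k+2}L^{[k+2]})$ and therefore vanishes by the injectivity noted above; thus $F\in\im(m_{A,B}^{k+2})$. So everything reduces to two surjectivity statements on the smooth variety $X^{[k+2]}$: (i) the Plücker maps $\wedge^{k+2}H^0(A)\to H^0(X^{[k+2]},\det A^{[k+2]})$ and $\wedge^{k+2}H^0(B)\to H^0(X^{[k+2]},\det B^{[k+2]})$ are surjective (linear normality of $X^{[k+2]}$ in its Grassmannian embeddings), and (ii) $H^0(\det A^{[k+2]})\otimes H^0(\det B^{[k+2]})\to H^0(\det A^{[k+2]}\otimes\det B^{[k+2]})$ is surjective.

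The hard part — and where the hypothesis that $A$ and $B$ are sufficiently ample is essential — is precisely (i) and (ii): they are surjectivity of multiplication and restriction maps for line bundles on $X^{[k+2]}$ built from bundles that become very positive as $A$ and $B$ grow, and I would prove them by Kawamata--Viehweg/Nadel-type vanishing on $X^{[k+2]}$, using the geometry of the Hilbert scheme (for surfaces, the explicit formula for $\det M^{[m]}$ in terms of tautological and boundary divisors; for $k=1$, the blow-up $X^{[3]}=\Bl_{\Delta}\Sym^3 X$). Controlling the cohomology of twists of tautological and determinant bundles on $X^{[k+2]}$, whose boundary and exceptional loci over the diagonals are delicate, is the technical heart of the matter — and it is exactly to keep $X^{[k+1]}$ and $X^{[k+2]}$ smooth and sufficiently well understood that one restricts to $n\le 2$ or $k\le 1$. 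Finally, the ``in particular'' clause follows by writing, for $L$ sufficiently ample, $L=A\otimes B$ with $A$ and $B$ themselves sufficiently ample, and applying the first part.
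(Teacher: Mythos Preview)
Your overall architecture matches the paper's: reduce to showing the multiplication map
\[
H^0(X^{[k+2]},\det A^{[k+2]})\otimes H^0(X^{[k+2]},\det B^{[k+2]})\longrightarrow H^0(X^{[k+2]},\det A^{[k+2]}\otimes\det B^{[k+2]})
\]
is surjective, after identifying $I(\Sigma_k(X,L))_{k+2}$ with the right-hand side. Two points where you are off the mark. First, your step (i) is a non-issue: the Pl\"ucker maps $\wedge^{k+2}H^0(X,A)\to H^0(X^{[k+2]},\det A^{[k+2]})$ are always \emph{isomorphisms}, not merely surjections (this is standard and is recorded in the paper). Second, your derivation of $I(\Sigma_k)_{k+2}\cong H^0(\mathcal M)$ via restriction to spans $\langle\xi\rangle$ and a ``local computation'' at the boundary is correct in spirit but sketchy; the paper simply quotes this identification from \cite{CLPS25+}, where it is proved carefully.

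The real gap is your step (ii). You write that you would prove the multiplication surjectivity ``by Kawamata--Viehweg/Nadel-type vanishing on $X^{[k+2]}$'', but no such direct vanishing argument is given, and it is not clear one exists: the determinant bundles $\det A^{[k+2]}\cong h^*S_{k+2,A}\otimes\mathcal O(-\delta_{k+2})$ carry the half-boundary twist $-\delta_{k+2}$, which is precisely what obstructs a naive positivity argument. The paper handles this in two concrete ways. The first proof pushes forward along the Hilbert--Chow morphism $h\colon X^{[k+2]}\to X^{(k+2)}$ and reduces to surjectivity of the sheaf map $h_*\mathcal O(-\delta_{k+2})\otimes h_*\mathcal O(-\delta_{k+2})\to h_*\mathcal O(-2\delta_{k+2})$; this is a \emph{local} statement on the symmetric product, equivalent to the commutative-algebra identity $J_{k+2}^{\mathfrak S_{k+2}}=I(\Delta_{k+2})^{\mathfrak S_{k+2}}$ for multisymmetric polynomials, which is known for $n\le 2$ by Haiman and is verified for $k=1$ (i.e.\ $d=3$) by an explicit reduction plus computer algebra. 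The second proof is cohomological but not via ambient vanishing: one uses the exact sequence for the globally generated $E_{k+2,B}$ to reduce to $H^i(X^{[k+2]},S^iE_{k+2,B}^\vee\otimes N_{k+2,A})=0$, which is then shown by computing $R^ih_*$ along the Hilbert--Chow morphism; for $k=1$ with $n\ge 3$ even this is not known directly, and an extra descent to $X^{[k+1]}$ via the nested Hilbert scheme $X^{[k+1,k+2]}$ is needed. Either way, the substance is not a black-box vanishing theorem on $X^{[k+2]}$ but a genuine structural input about the Hilbert--Chow morphism and the big diagonal.
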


This theorem gives an answer to Eisenbud's question \cite[Question 1.5]{BGL13} in the cases $n \leq 2$ or $k \leq 1$. This result was proven by Sidman--Smith \cite{SS2011} for an arbitrary projective scheme $X$ in the case $k=0$ and by Raicu \cite{Raicu} in the case $k=1$ for $X=\nP^n$. 

\medskip

For higher secant varieties, one important issue that arises is that the $(k+2) \times (k+2)$-minors of $\operatorname{Cat}(A,B)$ do not vanish only on $\Sigma_k(X,L)$ but also on the $k$-th \emph{cactus variety} $\kappa_k(X,L)$, which is the closure of the union of all $k$-planes spanned by finite subschemes $\xi\subseteq X$ of length $k+1$. It is always true that $\Sigma_k(X,L)\subseteq \kappa_k(X,L)$ and equality holds when the \emph{Hilbert scheme}
\[ 
X^{[k+1]} := \{ \xi \subseteq X \text{ finite subscheme } \,|\, \operatorname{length}(\xi) = k+1 \} 
\]
of $k+1$ points on $X$ is irreducible. In particular, this holds if $n\leq 2$ or $k\leq 2$: in these cases the Hilbert scheme of points is even smooth. In general, however, this is not the case if $n$ and $k$ are large enough, so that Sidman--Smith proposed to generalize the Eisenbud--Koh--Stillman conjecture by replacing secant varieties with cactus varieties (see \cite[Conjecture 1.2 and Subsequent Discussion]{SS2011}).

\begin{Sidman-Smith}
Let $X$ be a smooth projective variety of dimension $n$. If $L=A\otimes B$ is a line bundle on $X$ with $A,B$ sufficiently ample, then the ideal $I(\kappa_k(X,L))$ is generated by  the $(k+2) \times (k+2)$-minors of $\operatorname{Cat}(A,B)$. In particular, the ideal $I(\kappa_k(X,L))$ is determinantally presented whenever $L$ is sufficiently ample.
\end{Sidman-Smith}

Theorem \ref{thm:SS} is a full affirmative answer to this conjecture in the cases $n\leq 2$ or $k\leq 1$. As remarked before, in these cases, $\Sigma_k(X,L)=\kappa_k(X,L)$. We point out that if $n\leq 2$ or $k\leq 2$, then it was recently proven by Choi--Lacini--Park--Sheridan \cite[Theorem B]{CLPS25+} that the ideal $I(\Sigma_k(X,L)) = I(\kappa_k(X,L))$ is generated in degree $k+2$ whenever $L$ is sufficiently ample. This is indeed the starting point for this paper. 

\medskip

The distinction between cactus varieties and secant varieties becomes problematic even when $X$ is a singular curve. It is known that the Sidman--Smith conjecture does not hold for secant varieties of singular curves (see \cite[Theorem 1.17]{BGL13}). However, for an arbitrary cactus variety of any projective scheme, the conjecture was recently proven by Buczy\'nska--Buczy\'nski--Farnik \cite{BBF24+} but at the level of sets, not of ideals. Actually, they expect that the conjecture should not always hold on the ideal-theoretic or even scheme-theoretic level (see \cite[Subsection 6.3]{BBF24+}).

\medskip

One of our approaches to Theorem \ref{thm:SS} also leads to a quick proof of projective normality for Hilbert schemes of points on curves and surfaces. Suppose that $X$ is smooth of dimension $n\leq 2$, set $k\geq 0$ and consider a line bundle $L$ on $X$ that is $(k+1)$-very ample, meaning that for any $\xi\in X^{[k+2]}$ the evaluation map
\[ \operatorname{ev}_{L,\xi}\colon H^0(X,L) \longrightarrow H^0(X,L\otimes \sO_{\xi}) \]
is surjective. This yields a map 
\[ \varphi_{L,k+2}\colon X^{[k+2]} \longrightarrow G(k+2,H^0(X,L)) \subseteq \nP(\wedge^{k+2} H^0(X,L)); \quad \xi \longmapsto H^0(X,L\otimes \sO_{\xi}) \]
into the Grassmannian of $(k+2)$-dimensional quotients of $H^0(X,L)$. This map is furthermore an embedding if $L$ is $(k+2)$-very ample by \cite{CG}, so that it realizes $X^{[k+2]}$ as a subvariety of $\nP(\wedge^{k+2}H^0(X,L))$. It is quite natural then to expect that the positivity $L$ on $X$ should be reflected into the equations of $X^{[k+2]}$. Here we confirm this expectation for projective normality:

\begin{theoremalpha}\label{thm:projnormalityhilb}
	Let $X$ be a smooth projective variety of dimension $n\leq 2$, and let $k\geq 0$. If $L$ is a sufficiently ample line bundle on $X$, then the embedding
	\[ \varphi_{L,k+2}\colon X^{[k+2]} \longhookrightarrow \nP(\wedge^{k+2}H^0(X,L)) \]
	is projectively normal.
\end{theoremalpha}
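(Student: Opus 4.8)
The plan is to deduce projective normality of $\varphi_{L,k+2}\colon X^{[k+2]}\hookrightarrow \nP(\wedge^{k+2}H^0(X,L))$ from the determinantal description of $I(\Sigma_{k+1}(X,L))$ established in Theorem \ref{thm:SS}, via the tautological secant construction. Recall that when $X$ has dimension $n\leq 2$, the Hilbert scheme $X^{[k+2]}$ is smooth and irreducible, and the secant variety $\Sigma_{k+1}(X,L)$ admits the universal presentation as the image of the projective bundle
\[
\nP := \nP_{X^{[k+2]}}\bigl((\pi_*(L\boxtimes \sO_{\mathscr{Z}}))^{\vee}\bigr) \longrightarrow \Sigma_{k+1}(X,L)\subseteq \nP^r,
\]
where $\mathscr{Z}\subseteq X\times X^{[k+2]}$ is the universal subscheme. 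The key point is that this map is a resolution of singularities (it is birational onto $\Sigma_{k+1}$ since $k+1\leq 2$ forces finite subschemes of length $k+2$ to impose independent conditions generically, and $X^{[k+2]}$ is irreducible), and the Grassmannian embedding $\varphi_{L,k+2}$ is exactly the Stein-factorization base of $\nP\to \nP^r$, i.e.\ $X^{[k+2]}$ sits inside $G(k+2,H^0(X,L))$ as the variety parametrizing the $(k+2)$-secant planes. Projective normality of $X^{[k+2]}$ in its Plücker-coordinate embedding will follow once we know (i) the Plücker coordinates pull back to generate the section ring, and (ii) the higher cohomology of the relevant twists vanishes — both of which can be arranged by taking $L$ sufficiently ample and using the known syzygy/cohomology statements for $\Sigma_{k+1}(X,L)$ from \cite{ENP} and \cite{CLPS25+}.

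First, I would set up the comparison carefully: the pullback of $\sO_{G}(1)$ under $\varphi_{L,k+2}$ is the determinant line bundle $\det(\pi_*(L\otimes\sO_{\mathscr{Z}}))^{\vee}$, call it $\sL$, and $H^0(X^{[k+2]},\sL)$ receives a natural map from $\wedge^{k+2}H^0(X,L)$. Surjectivity of this map is the $d=k+2$, degree-one case; for higher degrees one wants the multiplication maps
\[
\Sym^j\bigl(\wedge^{k+2}H^0(X,L)\bigr) \longrightarrow H^0\bigl(X^{[k+2]},\sL^{\otimes j}\bigr)
\]
to be surjective for all $j\geq 1$. Second — and this is where Theorem \ref{thm:SS} enters — the quadratic (and higher) relations among Plücker coordinates restricted to $X^{[k+2]}$ are controlled by the homogeneous ideal $I(\Sigma_{k+1}(X,L))$: concretely, the coordinate ring $\bigoplus_j H^0(\Sigma_{k+1},\sO(j))$ is a finite module over $\bigoplus_j H^0(X^{[k+2]},\sL^{\otimes j})$, and because $\Sigma_{k+1}(X,L)$ is projectively normal for $L$ sufficiently ample (this follows from the degree-$(k+2)$ generation in \cite{CLPS25+} together with the $N_{k+1,1}$-type vanishing of \cite{ENP}, or can be cited directly), one can transfer normality downstairs. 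Third, I would invoke the standard criterion: a projective variety $Y\subseteq \nP(W)$ is projectively normal iff $H^1(\nP(W),\mathcal{I}_Y(j))=0$ for all $j$, and reduce this to a Koszul/cohomology computation on $X^{[k+2]}$ and on $\nP$, using that $R^i\rho_*\sO_{\nP}=0$ for $i>0$ (where $\rho\colon \nP\to X^{[k+2]}$), so that cohomology of powers of $\sL$ on $X^{[k+2]}$ agrees with cohomology of $\sO_{\nP}(j)$ along the fibers.

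The main obstacle, I expect, will be step two: rigorously identifying the section ring $\bigoplus_j H^0(X^{[k+2]},\sL^{\otimes j})$ with (the integral closure of, or better, exactly) a subring of $\bigoplus_j H^0(\Sigma_{k+1}(X,L),\sO(j))$ and controlling the discrepancy between "normal on the secant variety" and "normal on the Hilbert scheme." The resolution $\nP\to\Sigma_{k+1}$ is birational but not an isomorphism over the locus of non-reduced or badly-positioned schemes, so $\sL$ is a pullback of an ample (not very ample a priori) bundle twisted by something $\rho$-trivial, and one must check that the exceptional locus does not introduce new sections in low degree. I would handle this by choosing $L=L_0\otimes A^{\otimes m}$ with $A$ ample and $m\gg 0$, so that all the relevant $R^i$'s and obstruction groups — which are cohomology groups of fixed bundles tensored with positive powers of $A$ on the fixed varieties $X$, $X^{[k+2]}$, and $\mathscr{Z}$ — vanish by Serre vanishing, reducing everything to the already-established surjectivity of $m^{k+2}_{A,B}$ and the generation-by-quadrics statement of Theorem \ref{thm:SS}. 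Once those vanishings are in place, projective normality is essentially formal.
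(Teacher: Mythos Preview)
Your proposal has a genuine gap at the key transfer step. The line bundle $\sL$ you want is $N_{k+2,L}=\det E_{k+2,L}$, and projective normality asks for surjectivity of
\[
\Sym^j\bigl(\wedge^{k+2}H^0(X,L)\bigr)\longrightarrow H^0\bigl(X^{[k+2]},N_{k+2,L}^{\otimes j}\bigr).
\]
However, if $\rho\colon \nP\to X^{[k+2]}$ is the projective bundle you describe, then $\rho_*\sO_{\nP}(j)\cong S^jE_{k+2,L}$, so
\[
H^0(\nP,\sO_{\nP}(j))\;\cong\;H^0\bigl(X^{[k+2]},S^jE_{k+2,L}\bigr),
\]
which is \emph{not} $H^0(X^{[k+2]},N_{k+2,L}^{\otimes j})$. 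There is no natural map of sheaves $(\det E_{k+2,L})^{\otimes j}\to S^jE_{k+2,L}$, and consequently no ring homomorphism from $\bigoplus_j H^0(X^{[k+2]},\sL^{\otimes j})$ to $\bigoplus_j H^0(\Sigma_{k+1},\sO(j))$ making the latter a module over the former. The sentence ``the coordinate ring $\bigoplus_j H^0(\Sigma_{k+1},\sO(j))$ is a finite module over $\bigoplus_j H^0(X^{[k+2]},\sL^{\otimes j})$'' therefore has no content as written, and the mechanism by which projective normality of $\Sigma_{k+1}\subseteq \nP^r$ (which lives in a completely different projective space) would imply projective normality of $X^{[k+2]}\subseteq \nP(\wedge^{k+2}H^0(X,L))$ is never made precise. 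Theorem~\ref{thm:SS}, which concerns generators of $I(\Sigma_k)$ in degree $k+2$, plays no visible role in producing sections of $N_{k+2,L}^{\otimes j}$ for $j\geq 2$.

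The paper proceeds along an entirely different line. It first uses Castelnuovo--Mumford regularity to reduce to surjectivity of the multiplication maps $H^0(N_{k+2,L})^{\otimes \ell}\to H^0(N_{k+2,L}^{\otimes \ell})$ for finitely many $\ell$. Then, pushing forward along the Hilbert--Chow morphism $h\colon X^{[k+2]}\to X^{(k+2)}$ and stripping off the sufficiently ample twist, it reduces to surjectivity of the sheaf maps $(h_*N_{k+2,\sO_X})^{\otimes \ell}\to h_*(N_{k+2,\sO_X}^{\otimes \ell})$ on the symmetric product. This is a \emph{local} statement, trivially true for $n=1$ (where $h$ is an isomorphism), and for $n=2$ it is identified via Scala's description of $h_*\sO(-\ell\delta_{k+2})$ with the equality $J_{k+2}=I(\Delta_{k+2})$ of ideals in the polynomial ring, which is Haiman's theorem. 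No secant varieties appear.
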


We expect that a similar result should hold in general for higher syzygies of $X^{[k+2]}$, in terms property $(N_p)$ of Green and Lazarsfeld. This type of questions was first studied by Sheridan \cite{Sheridan} for the case $n=1$ and $k=0$ (see also \cite{SeoYoon} for the cases $n \geq 2$ and $k=0$). Another very interesting case to investigate in detail would be the Hilbert scheme of points on a $K3$ surface, since it is a canonical example of a hyperk\"ahler variety, whose syzygies are still largely unexplored.
\medskip

Perhaps surprisingly, it turns out that our proofs of Theorem \ref{thm:SS} and Theorem \ref{thm:projnormalityhilb} go through commutative algebra. Assume $n\leq 2$ or $k\leq 1$, and let $\kk[x_{ij}]$ be the coordinate ring of $(\nA^n)^{k+2} = \nA^{n(k+2)}$, seen as the set of matrices 
\begin{equation}\label{eq:matrixM}  
\begin{pmatrix}
	x_{11} & x_{12} & x_{13} & \cdots & x_{1,k+2} \\
	x_{21} & x_{22} & x_{23} & \cdots & x_{2,k+2} \\
	\vdots & \vdots & \vdots & \ddots & \vdots \\
	x_{n1} & x_{n2} & x_{n3} & \cdots & x_{n,k+2}	 
\end{pmatrix} 
\end{equation}
where each column represents a point in $\nA^n$. The big diagonal $\Delta_{k+2}$ of the $(k+2)$-th symmetric product $ (\nA^n)^{(k+2)}$ of $\nA^n$ is the set where at least two distinct columns  of \eqref{eq:matrixM} coincide and $I(\Delta_{k+2})\subseteq \kk[x_{ij}]$ is the corresponding ideal. The action of the symmetric group $\mathfrak{S}_{k+2}$ on the columns of \eqref{eq:matrixM} defines the ring $\kk[x_{ij}]^{\mathfrak{S}_{k+2}}$ of multisymmetric polynomials, as well as the set $\kk[x_{ij}]^{\varepsilon_{k+2}}$ of alternating polynomials and the ideal $J_{k+2} := (\kk[x_{ij}]^{\varepsilon_{k+2}}) \subseteq \kk[x_{ij}]$ generated by  the alternating polynomials inside the coordinate ring of $(\nA^n)^{k+2}$. Since an alternating polynomial vanishes if two distinct columns of \eqref{eq:matrixM} are the same, we see that $J_{k+2}\subseteq I(\Delta_{k+2})$. Theorem \ref{thm:SS} is actually implied by a partial converse of this statement, which has also a geometric interpretation in terms of the Hilbert scheme of points. To state it, recall that there is a Hilbert--Chow morphism
\[ 
h_{k+2} \colon X^{[k+2]} \longrightarrow X^{(k+2)}
\]
from the Hilbert scheme of $k+2$ points on $X$ to the $(k+2)$-th symmetric product of $X$, that sends a finite scheme to the points on its support counted with multiplicity. Furthermore, we denote by $\Delta_{(k+2)} \subseteq X^{(k+2)}$ the big diagonal, which is the image of the set $E_{k+2}\subseteq X^{[k+2]}$ of nonreduced subschemes.

\begin{propositionalpha}\label{prop:HC=blow-up}
Let $X$ be a smooth projective variety of dimension $n$. 
Assume that $n\leq 2 $ or $k\leq 1$ as before. Then one has the following:
	\begin{enumerate}
		\item $J_{k+2}^{\mathfrak{S}_{k+2}} = I(\Delta_{k+2})^{\mathfrak{S}_{k+2}}$.
		\item The Hilbert--Chow morphism  $h_{k+2} \colon X^{[k+2]} \to X^{(k+2)}$ is the blow-up of $X^{(k+2)}$ along the big diagonal $\Delta_{(k+2)}$ taken with its reduced structure.
	\end{enumerate}
\end{propositionalpha}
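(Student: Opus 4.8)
The plan is to obtain part (2) from part (1) together with the standard realization of the Hilbert--Chow morphism as a blow-up, and to treat part (1) as the real content. Throughout write $d:=k+2$, $R:=\kk[x_{ij}]$, and let $G:=\mathfrak{S}_d$ act on $(\nA^n)^d=\Spec R$ by permuting columns. First I would reduce part (2) to the affine model $X=\nA^n$: the Hilbert scheme $X^{[d]}$, the symmetric product $X^{(d)}$, the big diagonal and the blow-up all commute with étale base change on $X^{(d)}$, since over a cycle $\sum_\ell m_\ell p_\ell$ one has, étale-locally, $X^{(d)}\cong\prod_\ell U_\ell^{(m_\ell)}$ for étale neighbourhoods $U_\ell\subseteq X$ of the $p_\ell$, with $X^{[d]}\cong\prod_\ell U_\ell^{[m_\ell]}$, the Hilbert--Chow morphism equal to the product of the Hilbert--Chow morphisms, and $\Delta_{(d)}$ the evident union of pullbacks of big diagonals. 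As each $m_\ell\le d$, the hypothesis ``$n\le 2$ or $k\le 1$'' is inherited by the factors, so it suffices to show that $(\nA^n)^{[d]}\to(\nA^n)^{(d)}$ is the blow-up along the reduced big diagonal.

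For the passage from (1) to (2): on $(\nA^n)^{[d]}$ the determinant $\det\mathcal{V}$ of the tautological rank-$d$ bundle $\mathcal{V}$ (with fibre $H^0(\mathcal{O}_\xi)$ at $\xi$) is ample relative to $h_d$, so $(\nA^n)^{[d]}=\Proj_{(\nA^n)^{(d)}}\bigoplus_{m\ge 0}h_{d*}(\det\mathcal{V})^{\otimes m}$, and a standard argument — matching sections of $(\det\mathcal{V})^{\otimes m}$ with $G$-alternating functions on the $m$-fold fibre product of the universal subscheme — identifies this with $\Bl_{J_d^{G}}(\nA^n)^{(d)}$. Since taking $G$-invariants is exact in characteristic zero and $\Delta_d$ is reduced, the ideal of $\Delta_{(d)}$ inside $R^{G}$ is exactly $I(\Delta_d)^{G}$; hence part (1), namely $J_d^{G}=I(\Delta_d)^{G}$, gives $\Bl_{J_d^{G}}(\nA^n)^{(d)}=\Bl_{\Delta_{(d)}}(\nA^n)^{(d)}$, which is (2). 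In the ``three points'' case $d=3$ (arbitrary $n$) the blow-up description can instead be verified by hand, since $(\nA^n)^{[3]}$ is smooth and covered by explicit affine charts and one computes directly that the scheme-theoretic preimage of $\Delta_{(3)}$ is the reduced exceptional divisor.

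The crux is part (1). By the Reynolds operator, $J_d^{G}$ is the $\kk$-linear span of the products $f\cdot g$ of two $G$-alternating polynomials, so (1) asserts that every symmetric polynomial vanishing on $\Delta_d$ is a sum of such products (the inclusion $J_d^{G}\subseteq I(\Delta_d)^{G}$ being immediate from $J_d\subseteq I(\Delta_d)$). For $n=1$ this is the classical Vandermonde factorization, both sides being $(\operatorname{Vand})^2 R^{G}$. For $n=2$ and arbitrary $d$ it lies in the circle of ideas around Haiman's polygraph theorem and the $d!$-theorem, and is closely related to the fact that $\Hilb^d(\nA^2)=\Bl_\Delta\Sym^d(\nA^2)$. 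For $d=3$ and arbitrary $n$ I would argue directly: by $G$-symmetry it is enough to treat $I(\{p_1=p_2\})\cap R^{G}$, so write a symmetric $F$ vanishing on $\{p_1=p_2\}$ as $F=\sum_{i,j}(x_{i1}-x_{i2})(x_{j1}-x_{j2})\,G_{ij}$ with each $G_{ij}$ invariant under the transposition $(1\,2)$, re-symmetrise over $\mathfrak{S}_3$, and show each resulting term lands in $J_3$ using the explicit low-degree alternating polynomials, namely the maximal minors of the $(n+1)\times 3$ matrix with rows $(1,1,1)$ and $(x_{i1},x_{i2},x_{i3})$. I expect this to be the main obstacle: $J_d$ is not radical — it only has the same radical as $I(\Delta_d)$ — so the equality of symmetric parts is a genuine coincidence, and pinning it down seems to require Haiman's Cohen--Macaulayness results in the surface case and the explicit geometry of $\Hilb^3$ in the three-point case.
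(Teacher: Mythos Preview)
Your overall architecture matches the paper's: reduce part (2) to part (1) together with the fact that the Hilbert--Chow morphism is the blow-up along the ideal generated by products of alternating functions, and treat part (1) case by case, with $n\le 2$ coming from Haiman and the Vandermonde. The passage from (1) to (2) that you sketch is exactly what the paper does, except that the identification of $h_d$ with $\Bl_{\mathcal{J}_{(d)}}$ is not quite ``standard'': the paper invokes Ekedahl--Skjelnes \cite{ESGood} for this. Your alternative suggestion for $d=3$, computing that $h_3^{-1}(\Delta_{(3)})$ is the reduced exceptional divisor in charts, does not by itself show that $h_3$ is the blow-up along the \emph{reduced} ideal: pulling back to an invertible ideal only gives a map to the blow-up, not an isomorphism, so you would still need a separate argument.

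The real gap is your treatment of part (1) for $d=3$ and arbitrary $n$. Your reduction to $(I(\Delta_{12})^2)^{\mathfrak{S}_3}$ is fine, but the step ``re-symmetrise over $\mathfrak{S}_3$ and show each resulting term lands in $J_3$ using the explicit low-degree alternating polynomials'' does not go through as stated: the symmetrised terms $(x_{i\sigma(1)}-x_{i\sigma(2)})(x_{j\sigma(1)}-x_{j\sigma(2)})\,\sigma(G_{ij})$ lie only in a square of a single pairwise-diagonal ideal, and there is no evident way to express them via products of fully $\mathfrak{S}_3$-alternating polynomials. The $3\times 3$ minors you mention are far from generating $J_3$. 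The paper handles this case quite differently: it first invokes Scala's result \cite[Corollary 4.21]{ScalaDiag} that $I(\Delta_3)^{\mathfrak{S}_3} = \big(I_{\Delta_{12}}\cdot I_{\Delta_{13}}\cdot I_{\Delta_{23}}\big)^{\mathfrak{S}_3}$, so it suffices to show the \emph{product} ideal is contained in $J_3$. Its generators $(x_{1a}-x_{2a})(x_{1b}-x_{3b})(x_{2c}-x_{3c})$ involve only three of the $n$ rows, so by symmetry one reduces to $n=3$, where the paper verifies $J_3=I(\Delta_3)$ by computer algebra (\texttt{Macaulay2} and \texttt{OSCAR}). This is the missing ingredient in your sketch.
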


Notice that this statement covers all cases where the Hilbert scheme $X^{[k+2]}$ is smooth. Actually, the only new results here are for the case $k=1$ and $n$ arbitrary.  In fact, in all other cases the stronger statement $J_{k+2} = I(\Delta_{k+2})$ holds. This is easy to see directly for either $n=1$ and arbitrary $k$ or arbitrary $n$ and $k=0$. If instead  $n=2$ and $k$ is arbitrary, this is a highly nontrivial result due to Haiman \cite[Corollary 3.8.3]{Haiman} (see also \cite[Theorem 1.1]{Haimanlectures}). This in turn implies the description of the Hilbert--Chow morphism as a blow-up (Proposition \ref{prop:idealXmsmooth}).

\medskip

Finally, we turn to a more detailed investigation of the homogeneous ideal $I(X,L)$ defined by an arbitrary projective scheme $X$ and a sufficiently ample line bundle $L$ on $X$. The result of Sidman--Smith \cite{SS2011} shows that $I(X,L)$  is generated by the $2\times 2$-minors of a matrix of linear forms.  In particular, $I(X,L)$ is generated by quadrics of rank $4$. Note that a quadric of rank 2 is the product of two linear forms. Thus the minimal rank of a quadric containing $X$ is at least 3. Indeed, it was shown by Han--Lee--Moon--Park \cite[Theorem 1.3]{HLMP2021} that $I(X, L^{\otimes 2})$ is generated by quadrics of rank $3$, and it was asked in \cite[Conjecture 6.1]{HLMP2021} whether the same holds for the ideal $I(X,L)$.
 
\begin{HLMP}
If $X$ is a projective scheme and $L$ is a sufficiently ample line bundle on $X$, then the ideal $I(X,L)$ is generated by quadrics of rank $3$.
\end{HLMP}

Our last main result confirms this conjecture, and also gives an effective condition on $L$ when $X$ is smooth. 

\begin{theoremalpha}\label{thm:HLMP}
If $X$ is a projective scheme and $L$ is a sufficiently ample line bundle on $X$, then the ideal $I(X,L)$ is generated by quadrics of rank $3$. Furthermore, if  $X$ is a smooth projective variety of dimension $n$ and $L:=\omega_X^{\otimes 3}\otimes H^{\otimes (3n+6)} \otimes M$ with $H$ very ample and $M$ nef, then the ideal $I(X,L)$ is generated by quadrics of rank $3$. \end{theoremalpha}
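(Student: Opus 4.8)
The plan is to deduce the rank-three statement from the rank-four statement of Sidman--Smith together with the results on secant/cactus varieties and Hilbert schemes developed in this paper, essentially by a clever change of the factorization $L = A \otimes B$. Recall that Sidman--Smith give us, for $L$ sufficiently ample, a factorization $L = A_0 \otimes B_0$ such that $I(X,L)$ is generated by the $2\times 2$-minors of $\operatorname{Cat}(A_0,B_0)$, i.e. by the quadrics $m^2_{A_0,B_0}(s_1\wedge s_2)\otimes(t_1\wedge t_2)$. Each such quadric has rank at most $4$ because it is a $2\times 2$ minor $m^1(s_1\otimes t_1)m^1(s_2\otimes t_2) - m^1(s_1\otimes t_2)m^1(s_2\otimes t_1)$, a $2\times 2$ determinant in four linear forms. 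The point is that such a minor has rank exactly $3$ — not $4$ — precisely when the four linear forms $\ell_{ij} := m^1(s_i\otimes t_j)$ satisfy one linear relation, e.g. $\ell_{11} = \ell_{22}$ (then the quadric becomes $\ell_{11}^2 - \ell_{12}\ell_{21}$, of rank $3$), or more generally when some nontrivial linear combination among the $\ell_{ij}$ vanishes. So the strategy is: show that for a suitable sufficiently ample $L$ we can pick the factorization $L = A\otimes B$ and then pick the generating set of $2\times 2$-minors so that each generator is a minor of this special degenerate type.

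**First**, I would reduce to the smooth case in the usual way: by Noetherian induction / taking a closed embedding into projective space, or rather by the standard device that a sufficiently ample line bundle on any projective scheme $X$ is the restriction of a very ample line bundle on a smooth ambient variety, and the property ``$I(X,L)$ generated by rank-$3$ quadrics'' is preserved under taking a sufficiently ample twist — here one uses that $X$ is cut out inside the smooth variety by its ideal, together with the vanishing/regularity facts that make $I(X,L)$ ``inherit'' generators from the ambient one. Concretely, if $Y\supseteq X$ is smooth with $L = L_Y|_X$ and we know $I(Y,L_Y)$ is generated by rank-$3$ quadrics, then since $L_Y$ can be taken sufficiently ample so that $H^0(Y,L_Y)\twoheadrightarrow H^0(X,L)$ and the ideal sheaf of $X$ in $Y$ is $0$-regular with respect to $L_Y$ after twisting, the extra quadratic generators of $I(X,L)$ coming from $I_{X/Y}$ can themselves be arranged to be of rank $3$ by multiplying a linear form by a linear form (rank $2$) — actually rank $2$ quadrics are allowed to be ``promoted'' trivially, or one argues as in \cite{HLMP2021}. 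In any case this reduction is the soft part; the heart is the smooth case with the effective bound.

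**For the smooth case**, write $n = \dim X$ and take $L = \omega_X^{\otimes 3}\otimes H^{\otimes(3n+6)}\otimes M$ with $H$ very ample and $M$ nef. Split $L = A\otimes B$ with $A := \omega_X\otimes H^{\otimes(n+2)}\otimes M$-ish and $B := \omega_X^{\otimes 2}\otimes H^{\otimes(2n+4)}$, i.e. both $A$ and $B$ are of the ``adjoint-plus-enough-very-ample'' form that by Castelnuovo--Mumford regularity (the classical $\omega_X\otimes H^{\otimes(n+1)}$ basepoint-free and $\omega_X\otimes H^{\otimes(n+2)}$ very ample bounds, plus Fujita-type statements for the multiplication map $m^1_{A,B}$ being surjective and for $I(X,L)$ being generated by quadrics) makes $\operatorname{Cat}(A,B)$ $1$-generic with $I(X,L)$ equal to its $2\times 2$-minors — this is the effective Sidman--Smith / Ein--Niu--Park package in the case $k=0$, or one cites \cite{SS2011} with the effective refinements. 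Now the key trick: arrange that $A$ and $B$ share a common ``large'' factor, say $A = A'\otimes D$ and $B = B'\otimes D$ with $D$ very ample, so that the image of $m^1_{A',D}\colon H^0(A')\otimes H^0(D)\to H^0(A)$ and the image of $m^1_{B',D}\colon H^0(B')\otimes H^0(D)\to H^0(B)$ interact: concretely $m^1_{A,B}(s\otimes t)$ for $s\in H^0(A)$, $t\in H^0(B)$ of the form $s = a\cdot d_1$, $t = b\cdot d_2$ with $a\in H^0(A')$, $b\in H^0(B')$, $d_1,d_2\in H^0(D)$, gives $m^1_{A,B}(s\otimes t) = ab\,d_1d_2 \in H^0(L)$, and this is visibly symmetric-ish in the $d$'s. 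The upshot is that one can choose the spanning set of $2\times 2$-minors $\{m^1(s_i\otimes t_j)\}$ where in each $2\times 2$ block two of the four product linear forms coincide — forcing each generator to be of the shape $\ell^2 - \ell_1\ell_2$, which has rank $3$. One has to check that these special minors still span all of $I(X,L)_2$: this follows because the general minors are spanned by the special ones via the $1$-genericity of $\operatorname{Cat}(A,B)$ (which controls linear syzygies among the entries) and because $H^0(A), H^0(B)$ are spanned by such decomposable sections $a\cdot d$ once $D$ is chosen sufficiently ample relative to $A', B'$ (surjectivity of $H^0(A')\otimes H^0(D)\to H^0(A)$ etc., again Castelnuovo--Mumford).

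**The main obstacle** I expect is exactly this last point: proving that the sub-set of ``degenerate'' $2\times 2$-minors (those with a repeated product-entry, hence rank $3$) already generates the whole ideal $I(X,L)$, and making the bookkeeping with $\omega_X, H, M$ give the precise exponents $3$ and $3n+6$. This is where one must combine (i) the effective statement that $I(X,L)$ is generated by quadrics, i.e. $\operatorname{reg}(X,L) \leq 2$, which by Castelnuovo--Mumford for $L = \omega_X^{\otimes 3}\otimes H^{\otimes(3n+6)}\otimes M$ comes from the Kodaira-type vanishing $H^i(X, L\otimes H^{\otimes(-i)}) = H^i(X, \omega_X^{\otimes 3}\otimes H^{\otimes(3n+6-i)}\otimes M) = 0$ for $i>0$ (clear since $3n+6-i \geq 3n+6-n = 2n+6 > 0$ and the bundle is $\omega_X\otimes(\text{ample})$), with (ii) the surjectivity of the relevant multiplication maps, and with (iii) the combinatorial fact (analogous to the $J_{k+2}\subseteq I(\Delta_{k+2})$ discussion, but in the much simpler $k=0$ case where it is trivial) identifying which linear combinations of product linear forms vanish on $X$ — these are exactly the Koszul/Plücker relations, and they are what let a general $2\times2$-minor be rewritten as a combination of rank-$3$ ones. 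Once the spanning is established, rank $3$ is immediate from the shape $\ell^2 - \ell_1\ell_2$. I would present this carefully since the constant $3n+6$ is asserted in the statement and must be traced through the regularity estimate without slack.
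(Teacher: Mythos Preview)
Your instinct to force the $2\times 2$ minors to degenerate to rank $3$ by introducing a shared factor is on the right track, but the argument as written has a genuine gap precisely where you yourself flag it: you do not show that the degenerate minors span $I(X,L)_2$. Invoking ``$1$-genericity of $\operatorname{Cat}(A,B)$'' does not do this; $1$-genericity controls when minors vanish, not whether a subfamily of them spans the space of all minors modulo $I(X,L)_2^{\perp}$. The paper's proof closes exactly this gap, and it does so by a different (and cleaner) device than your common-factor trick. Instead of $L=A\otimes B$ with $A=A'\otimes D$, $B=B'\otimes D$, the paper factors $L = A\otimes B^{\otimes 2}$ and proves two things: first, that $I(X,B^{\otimes 2})$ is generated by rank-$3$ quadrics whenever $X\subseteq \nP(H^0(X,B))$ is arithmetically normal with $I(X,B)$ generated by quadrics --- this is because $X\subseteq \nP(H^0(X,B^{\otimes 2}))$ is then a linear section of the second Veronese $v_2(\nP^r)$, and $I(\nP^r,\sO(2))_2\cong S^{(2,2)}V$ is an \emph{irreducible} $\operatorname{GL}(V)$-representation, so it is spanned by the orbit of a single rank-$3$ quadric $x_0x_2-x_1^2$; second, that the multiplication map $S^2H^0(X,A)\otimes I(X,B^{\otimes 2})_2 \to I(X,L)_2$ is surjective (this is the content of the surjectivity of the maps $s^2_{A,B}$ and $m^2_{A\otimes B,B}$), and multiplying a rank-$3$ quadric by a square $\alpha^2$ preserves rank $\leq 3$. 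The representation-theoretic irreducibility is the missing idea in your proposal: it is what replaces your hoped-for spanning argument.

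Two further remarks. First, your proposed reduction of the general case to the smooth case via a smooth ambient $Y\supseteq X$ does not work as stated: the additional generators of $I(X,L)$ coming from $I_{X/Y}$ are not ``linear times linear'', and there is no reason they should be rank $3$. The paper avoids this entirely --- the argument via $L=A\otimes B^{\otimes 2}$ works directly for arbitrary projective schemes once one has the vanishings of Lemma~\ref{lem:cohvanishingexplicit} (which need only $H^1$ and $H^2$ of kernel-bundle twists, available for any projective scheme with $A,B$ sufficiently ample). Second, the effective exponent $3n+6$ in the smooth case comes out immediately from the factorization $L = A\otimes B^{\otimes 2}$ with $A=\omega_X\otimes H^{\otimes(n+2)}\otimes M$ and $B=\omega_X\otimes H^{\otimes(n+2)}$: the conditions on $A,B$ needed for the surjectivity of $s^2_{A,B}$, $m^2_{A\otimes B,B}$ and for arithmetic normality of $(X,B)$ are exactly the Ein--Lazarsfeld adjoint bounds $j\geq n+2$, and $3(n+2)=3n+6$.
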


It is worth noting that the first statement of the theorem holds in positive characteristic different from 2 (see Remark \ref{rem:positivechar}). This settles the Han--Lee--Moon--Park conjecture in full generality.

\subsection{Strategy of the proofs}
We first sketch the strategy behind our proofs of Theorem \ref{thm:EKS} and Theorem \ref{thm:SS}. We assume that $X$ is a smooth projective variety of dimension $n$. Fix $k$ such that $n\leq 2$ or $k\leq 1$. As mentioned before, our starting point are the results of Ein--Niu--Park \cite[Theorem 1.2]{ENP} and Choi--Lacini--Park--Sheridan \cite[Theorem B]{CLPS25+}, which imply that the ideal $I(\Sigma_k(X,L))$ is generated in degree $k+2$ if $L$ is sufficiently ample. This shows that the map $m^{k+2}_{A,B}$ in \eqref{eq:mABk+2}  is surjective if and only if the $(k+2)\times (k+2)$ minors of the $1$-generic matrix $\operatorname{Cat}(A,B)$ in \eqref{eq:omega} generate the ideal $I(\Sigma_k(X,L))$. The surjectivity of $m^{k+2}_{A,B}$ is the main focus of this paper.

\medskip

The key idea, inspired by the work of Ein--Lazarsfeld \cite{EL15} on syzygies of algebraic curves, is to interpret the map $m^{k+2}_{A,B}$ as a multiplication map on the Hilbert scheme $X^{[k+2]}$.  Recall that if $A$ is any line bundle on $X$, there are a corresponding tautological bundle $E_{k+2,A}$ and its determinant line bundle $N_{k+2,A}:=\det E_{k+2,A}$ on $X^{[k+2]}$. It turns out that if $A,B$ are sufficiently ample, the map $m^{k+2}_{A,B}$ coincides with the multiplication map of global sections
\begin{equation}\label{eq:multmaphilb}
m^{k+2}_{A,B}\colon H^0(X^{[k+2]},N_{k+2,A})\otimes H^0(X^{[k+2]},N_{k+2,B}) \longrightarrow H^0(X^{[k+2]},N_{k+2,A}\otimes N_{k+2,B}).
\end{equation}

\medskip

In this paper, we give two different proofs of Theorem \ref{thm:SS}. The first proof goes through Proposition \ref{prop:HC=blow-up}. The multiplication map $m^{k+2}_{A,B}$ on $X^{[k+2]}$ can also be interpreted as a multiplication map of global sections
\begin{equation*}
	m^{k+2}_{A,B}\colon H^0(X^{(k+2)},h_*N_{k+2,A})\otimes H^0(X^{(k+2)},h_*N_{k+2,B}) \longrightarrow H^0(X^{(k+2)},h_*(N_{k+2,A}\otimes N_{k+2,B})) 
\end{equation*} 
on the symmetric product $X^{(k+2)}$. 
It holds that 
$$
h_*N_{k+2,A} \cong h_*N_{k+2,\sO_X}\otimes S_{k+2,A}~~\text{ and }~~h_*N_{k+2,B} \cong h_*N_{k+2,\sO_X}\otimes S_{k+2,B},
$$ 
where $h \colon  X^{[k+2]} \rightarrow X^{(k+2)}$ is the Hilbert--Chow morphism.
Note that $S_{k+2,A},S_{k+2,B}$ are sufficiently ample line bundles on $X^{(k+2)}$ if $A,B$ are on $X$. We show that the map $m^{k+2}_{A,B}$ is surjective for $A,B$ sufficiently ample if and only if the map of sheaves 
\[
h_*(N_{k+2,\sO_{X}}) \otimes h_*(N_{k+2,\sO_{X}}) \longrightarrow h_*(N_{k+2,\sO_X}\otimes N_{k+2,\sO_X})
\]
is surjective as well. This is a local statement on the symmetric product $X^{(k+2)}$. Taking local coordinates, we can reduce the problem to the case that $X=\nA^{n}$. In this case, the surjectivity of the map of sheaves is equivalent to the equality $J_{k+2}^{\mathfrak{S}_{k+2}} = I(\Delta_{k+2})^{\mathfrak{S}_{k+2}}$ asserted in Proposition \ref{prop:HC=blow-up}. In a similar way, we show that the statement of Theorem \ref{thm:projnormalityhilb} is implied by the surjectivity of the maps
\[ h_*(N_{k+2,\sO_X})^{\otimes \ell} \longrightarrow h_*(N_{k+2,\sO_X}^{\otimes \ell})  \]
for all $\ell \geq 1$. We reduce again to the case $X=\nA^n$ where the surjectivity is in turn a consequence of the stronger statement $J_{k+2}=I(\Delta_{k+2})$, which holds for $n\leq 2$: see Remark \ref{rmk:equalityideals}.

\medskip

The second proof of Theorem  \ref{thm:SS} is via cohomology vanishing on Hilbert schemes of points. 
It might be more involved than the previous algebraic approach, but it points to the statement that one would like to prove in order to obtain an effective result. In fact, Theorem \ref{thm:EKS} is shown by this way. We work on the Hilbert scheme $X^{[k+2]}$ and, if $B$ is sufficiently ample, then $E_{k+2,B}$ and $N_{k+2,B}$ are globally generated so that standard arguments yield an exact sequence
\begin{equation}
	\cdots  \longrightarrow G_2 \longrightarrow G_1  \longrightarrow  H^0(X^{[k+2]},N_{k+2,B}) \otimes N_{k+2,A}  \longrightarrow  N_{k+2,A}\otimes N_{k+2,B}  \longrightarrow  0,
\end{equation}
where each $G_i$ is a direct sum of copies of $ S^iE_{k+2,B}^{\vee} \otimes N_{k+2,A}$. Hence, to prove the surjectivity of the multiplication map $m_{A,B}^{k+2}$, it is enough to show that if $A$ is sufficiently ample, then
\begin{equation}\label{eq:h1vanishing} 
H^i(X^{[k+2]},S^iE_{k+2,B}^{\vee}\otimes N_{k+2,A}) = 0 \qquad \text{ for all } i>0.
\end{equation}
For this purpose, we use a strategy already employed by the authors in \cite{Ago22,CLPS25+} via the Hilbert--Chow morphism
$h \colon  X^{[k+2]} \rightarrow X^{(k+2)}$. Recall  that $N_{k+2,A} \cong N_{k+2,\sO_X}\otimes h^*S_{k+2,A}$ for a sufficiently ample line bundle $S_{k+2,A}$ on $X^{(k+2)}$. Thus Fujita--Serre vanishing (see e.g., \cite[Theorem 1.4.35]{LazPosI}) and the Leray spectral sequence show that the vanishing \eqref{eq:h1vanishing} for $A$ sufficiently ample is equivalent to
\begin{equation}\label{eq:r1vanishing} 
	R^ih_*(S^iE_{k+2,B}^{\vee}\otimes N_{k+2,\sO_X}) = 0 \qquad \text{ for all } i>0.
\end{equation}
This is immediate if  $n=1$ because then the Hilbert--Chow morphism is an isomorphism. If instead $n=2$, this was proven in \cite[Proposition 4.9]{CLPS25+}, and if $k=0$, we prove it in Proposition \ref{prop:cohvanishing}. When $k=1$, we do not know whether the vanishings of \eqref{eq:r1vanishing} hold for an arbitrary $n$, even though we believe that they do. However, using the nested Hilbert schemes, we can reduce in Lemma \ref{lem:cohvanishingbetter} the surjectivity of $m_{A,B}^{k+2}$ to a vanishing like \eqref{eq:r1vanishing} but on $X^{[k+1]}$ instead that on $X^{[k+2]}$. This vanishing holds for $k=1$ thanks to Proposition \ref{prop:cohvanishing}. Now, for the effective statement of Theorem \ref{thm:EKS}, it is not enough to just invoke Fujita--Serre vanishing since we need a quantitative statement. We obtain Theorem \ref{thm:EKS} via the yoga of tautological  bundles on $C^{[k+2]}$, which has recently proven itself very useful for various problems on syzygies of algebraic curves and their secant varieties (see \cite{Ago24, CKP23+, EL15, ENP, NP24+}).

\medskip

Finally, let us discuss our proof of Theorem \ref{thm:HLMP} so that $X$ is an arbitrary projective scheme. The key point is again given by the surjectivity of two maps
\begin{align*}
m^2_{A,B} &\colon \wedge^2H^0(X,A)\otimes \wedge^2 H^0(X,A) \to I(X, L)_2	\\  
s^2_{A,B} & \colon S^2H^0(X,A)\otimes \wedge^2 H^0(X,A) \to \wedge^2 H^0(X,L)  
\end{align*}
when $A,B$ are sufficiently ample line bundles on $X$ and $L:=A \otimes B$. 
If $X$ is smooth, then the surjectivity of these maps can be shown via cohomology vanishing on Hilbert schemes of points as before. This was the approach we originally employed. However, we realized that the surjectivity can be established without relying on Hilbert schemes although the arguments are certainly inspired by the proof of Theorem \ref{thm:SS}. This latter approach is only written in this paper, but we expect that the reader could easily reproduce our original proof via  Hilbert schemes of points when $X$ is smooth. 
In the process, we reprove the main results of \cite{SS2011} that the ideal $I(X, L)$ is determinantally presented with an effective condition on $X$ when $X$ is smooth. Another important ingredient of the proof of Theorem \ref{thm:HLMP} is a special case of  \cite[Theorem 1.1]{HLMP2021} that the ideal  $I(\nP^n,\sO_{\nP^n}(2))$ of the second Veronese embedding is generated by quadrics of rank 3. Actually, we give a quick proof of this result via representation theory (Proposition \ref{prop:rank3veronese}). Combining this with Theorem \ref{thm:HLMP}, we fully recover \cite[Theorem 1.1]{HLMP2021}, which is the main result of that paper. Our proof of Theorem \ref{thm:HLMP} is fairly self-contained.

\subsection{Organization of the paper}
We begin in Section \ref{sec:background} with presenting miscellaneous technical statements including a special case of Proposition \ref{prop:HC=blow-up}. In Section \ref{sec:yoga}, we recall some facts on Hilbert schemes of points and show key cohomology vanishings that we are going to use later. We also complete the proof of Proposition \ref{prop:HC=blow-up}. Section \ref{sec:algebra} is devoted to the first proof of Theorem \ref{thm:SS}, and Section \ref{sec:projnorm} takes up similar ideas to obtain Theorem \ref{thm:projnormalityhilb}. In Section \ref{sec:cohomology}, we instead employ a cohomological approach to prove Theorem \ref{thm:EKS} and to give the second proof of Theorem \ref{thm:SS}. Finally, in Section \ref{sec:quadrics43}, we show Theorem \ref{thm:HLMP} and provide a quick alternative approach to the main results of \cite{HLMP2021} and \cite{SS2011} along the way.

\subsection*{Acknowledgments} We are grateful to Junho Choe, Doyoung Choi, Joachim Jelisiejew, Hanieh Keneshlou, Euisung Park, Claudiu Raicu, and John Sheridan for useful comments and discussions. DA thanks the Department of Mathematical Sciences at KAIST for the hospitality during the KAIST Thematic Program on Syzygies $\&$ Secants in 2024, where this paper was developed. DA was supported by the DFG under  the joint ANR-DFG program ``Positivity on K-trivial varieties'' (DFG Project Nr. 530132094.) and by the SFB-TRR 195. JP was supported by the National Research Foundation (NRF) funded by the Korea government (RS-2021-NR061320 and RS-2023-NR076427).

\section{Some background}\label{sec:background}

\noindent We start collecting some technical results as well as some algebro-combinatorial results about multisymmetric polynomials.

\subsection{Linear algebra for vector bundles}
We record a useful fact about alternating powers of vector bundles. 

\begin{lemma}\label{lem:seqbundles}
	Let $ 0 \to A \to B \to C \to 0$ be a short exact sequence of vector bundles on a scheme $X$ with $a:=\operatorname{rank} A, b:=\operatorname{rank} B, c:=\operatorname{rank} C$. For any $h\geq 0$ there are exact sequences
	\begin{align}
		\cdots  \longrightarrow  \wedge^{h-2}B \otimes S^2A \longrightarrow \wedge^{h-1}B\otimes A \longrightarrow  \wedge^h B  \longrightarrow  \wedge^h C  \longrightarrow  0 \label{eq:standardses} \\
		 \cdots \longrightarrow  \wedge^{h+2}B \otimes S^2C^{\vee}  \longrightarrow  \wedge^{h+1}B\otimes C^{\vee}  \longrightarrow \wedge^h B  \longrightarrow \wedge^{h-c} A\otimes \det C \longrightarrow  0 \label{eq:dualses}
	\end{align}
\end{lemma}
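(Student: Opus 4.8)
The two displayed complexes are the classical Koszul-type resolutions attached to a short exact sequence of vector bundles, so the plan is to prove the first one by a local computation and then deduce the second by duality.

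\emph{First sequence \eqref{eq:standardses}.} Since exactness of a complex of sheaves is local and a short exact sequence of vector bundles splits Zariski-locally, I would assume $B\cong A\oplus C$ compatibly with the maps. Then $\wedge^hB\cong\bigoplus_{p+q=h}\wedge^pA\otimes\wedge^qC$, and the truncated complex $\cdots\to\wedge^{h-1}B\otimes A\to\wedge^hB$ breaks up, according to the exterior power of $C$ that occurs, as a direct sum $\bigoplus_{q}\wedge^qC\otimes K^{(h-q)}_\bullet(A)$, where $K^{(m)}_\bullet(A)$ is the internal-degree-$m$ strand
\[
0\longrightarrow S^mA\longrightarrow A\otimes S^{m-1}A\longrightarrow \wedge^2A\otimes S^{m-2}A\longrightarrow\cdots\longrightarrow\wedge^mA\longrightarrow 0
\]
of the Koszul complex of the identity of $A$ (the differential carrying one factor from the symmetric to the exterior part). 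The Cartan homotopy identity — the anticommutator of this differential with its opposite acts as multiplication by the internal degree — shows $K^{(m)}_\bullet(A)$ is exact for $m\geq 1$, while $K^{(0)}_\bullet(A)=\kk$ lives in a single degree. Hence the only homology surviving is the $q=h$ summand, equal to $\wedge^hC$, which is precisely \eqref{eq:standardses}. (Alternatively one may simply invoke that \eqref{eq:standardses} is the degree-$h$ piece of the standard Koszul resolution of $\wedge^\bullet C$ as a module over $\wedge^\bullet B$, using that divided and symmetric powers agree in characteristic zero.)

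\emph{Second sequence \eqref{eq:dualses} from the first.} I would apply \eqref{eq:standardses} to the dual short exact sequence $0\to C^\vee\to B^\vee\to A^\vee\to 0$, with the integer there set to $m:=b-h$, producing an exact complex
\[
\cdots\longrightarrow\wedge^{m-2}B^\vee\otimes S^2C^\vee\longrightarrow\wedge^{m-1}B^\vee\otimes C^\vee\longrightarrow\wedge^mB^\vee\longrightarrow\wedge^mA^\vee\longrightarrow 0.
\]
Rather than dualize the whole complex, I would rewrite the exterior powers of $B^\vee$ and $A^\vee$ via the canonical isomorphisms $\wedge^jE^\vee\cong\wedge^{\operatorname{rank}E-j}E\otimes\det E^\vee$ (leaving the factors $C^\vee$, $S^2C^\vee$ alone) and then twist by $\det B$. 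Using $\det B\cong\det A\otimes\det C$ this gives $\wedge^mB^\vee\otimes\det B\cong\wedge^hB$, $\wedge^{m-j}B^\vee\otimes\det B\cong\wedge^{h+j}B$, and $\wedge^mA^\vee\otimes\det B\cong\wedge^{a-m}A\otimes\det C=\wedge^{h-c}A\otimes\det C$ since $a-m=a-b+h=h-c$. The resulting complex is \eqref{eq:dualses}; that the transported differentials are the contraction maps in the statement is a routine verification in the split local model.

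\emph{Expected obstacle.} There is no deep point here: the single genuine input is exactness of the Koszul strand $K^{(m)}_\bullet(A)$ for $m\geq 1$, a classical fact where characteristic zero is used (self-duality of $S^j$, invertibility of the scalar in the homotopy identity). What needs care is bookkeeping — fixing the differentials and their signs so that the two reductions really produce the displayed maps, checking exactness at the leftmost term of each complex (again a consequence of exactness of the Koszul strands in the split model), and the degenerate ranges, e.g.\ $h<c$ in \eqref{eq:dualses}, where $\wedge^{h-c}A\otimes\det C=0$ and the statement just says the complex resolves $\wedge^hB$, or large $h$, where the ``$\cdots$'' is shorthand since $\wedge^{h\pm j}B=0$ once the index exceeds $b$.
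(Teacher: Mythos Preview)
Your proposal is correct and follows essentially the same approach as the paper: the paper simply cites [ABW82] for \eqref{eq:standardses} and then derives \eqref{eq:dualses} by applying the first sequence to the dual $0\to C^\vee\to B^\vee\to A^\vee\to 0$ in degree $b-h$ and using the isomorphisms $\wedge^{b-h-i}B^\vee\cong\wedge^{h+i}B\otimes\det B^\vee$, $\wedge^{b-h}A^\vee\cong\wedge^{h-c}A\otimes\det A^\vee$, and $\det B\cong\det A\otimes\det C$, exactly as you do. Your local Koszul-strand argument for \eqref{eq:standardses} is a fine elaboration of what the paper leaves as a citation.
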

\begin{proof}
	The first sequence is standard (see \cite[Section V]{ABW82}). For the second, we can take the dual exact sequence and take $b-h$ alternating powers to obtain an exact sequence
	\[ \cdots  \longrightarrow  \wedge^{b-h-2}B^{\vee}\otimes S^2C^{\vee}  \longrightarrow \wedge^{b-h-1}B^{\vee} \otimes C^{\vee}  \longrightarrow  \wedge^{b-h}B^{\vee}  \longrightarrow  \wedge^{b-h}A^{\vee}  \longrightarrow  0.  \]
	Observe that
	\[ \wedge^{b-h-i}B^{\vee}\cong \wedge^{h+i}B\otimes \det B^{\vee}~~\text{ and }~~\wedge^{b-h}A^{\vee} \cong \wedge^{a-b+h}A \otimes \det A^{\vee} \cong \wedge^{h-c}A\otimes \det A^{\vee}.\]
	To conclude, we just need to recall that $\det B \cong \det A \otimes \det C$.
\end{proof}

\subsection{Sufficiently ample line bundles}
We briefly discuss some standard results on sufficiently ample line bundles. We also refer to \cite[Section 3]{BBF24+}. The most important result is perhaps the Fujita--Serre vanishing (see e.g., \cite[Theorem 1.4.35]{LazPosI}), which asserts that if $X$ is a projective scheme, $\mathcal{F}$ is a coherent sheaf on $X$ and $A$ is a sufficiently  ample line bundle on $X$, then $H^i(X, \mathcal{F} \otimes A)=0$ for $i>0$. It is easy to see that $H^0(X, \mathcal{F} \otimes A) \neq 0$ if and only if $\mathcal{F} \neq 0$. Recall that a line bundle $L$ on $X$ is said to be \emph{$d$-very ample} if for any finite subscheme $\xi\subseteq X$ of length $d+1$, the evaluation map
\[ \operatorname{ev}_{L,\xi}\colon H^0(X,L) \longrightarrow H^0(X,L\otimes \sO_{\xi}) \]
is surjective. If $d=0$, this is the same as being globally generated, and if $d=1$, this is the same as being very ample. If $L$ is globally generated, the \emph{kernel bundle} $M_L$ is defined by the short exact sequence
\[ 0 \longrightarrow M_L \longrightarrow H^0(X,L)\otimes \sO_X \xrightarrow{~\text{ev}_L~} L \longrightarrow 0. \]

\medskip

Consider the $k$-th Cartesian product $X^k$.
For $1 \leq i < j \leq k$, we denote by
$$
\Delta_{i,j}:=\{(y_1, \ldots, y_k) \in Y^k \mid y_i=y_j\} \subseteq Y^k
$$
the pairwise diagonal. It is easy to see that if $\pr_2 \colon X \times X \to X$ is the projection to the second factor, then $\pr_{2,*} ((\sO_X \boxtimes L)\otimes \mathcal{I}_{\Delta_{1,2}}) = M_L$ when $L$ is globally generated. 

\begin{lemma}\label{lem:M_L}
Let $Y$ be a projective scheme, $A_1, \ldots, A_k$ be globally generated line bundles on $Y$ such that $H^i(Y, A_j)=0$ for $i>0$ and $1 \leq j \leq k$, and $B$ be a vector bundle on $Y$. Then
$$
H^i(Y, M_{A_1} \otimes \cdots \otimes M_{A_k} \otimes B) \cong H^i(Y^{k+1},(A_1\boxtimes \cdots \boxtimes A_k \boxtimes B)\otimes (\mathcal{I}_{\Delta_{1,k+1}}\otimes \cdots \otimes \mathcal{I}_{\Delta_{k,k+1}}))
$$
for all $i \geq 0$. 
\end{lemma}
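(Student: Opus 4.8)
The plan is to proceed by induction on $k$, using the case $k=1$ as the base step. For $k=1$, the claim says $H^i(Y, M_{A_1}\otimes B)\cong H^i(Y^2,(A_1\boxtimes B)\otimes \mathcal{I}_{\Delta_{1,2}})$. Here I would use the identification $\pr_{2,*}((\sO_Y\boxtimes A_1)\otimes \mathcal{I}_{\Delta_{1,2}})=M_{A_1}$ recorded just before the lemma, together with the projection formula: pushing forward $(A_1\boxtimes B)\otimes\mathcal{I}_{\Delta_{1,2}} = \pr_1^*A_1\otimes \pr_2^*B\otimes \mathcal{I}_{\Delta_{1,2}}$ along $\pr_2$ gives $M_{A_1}\otimes B$ (the roles of the two factors have to be matched carefully, but this is exactly the statement before the lemma with the factors relabelled). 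To pass from the pushforward to cohomology on $Y^2$ I need the higher direct images $R^j\pr_{2,*}$ of $(A_1\boxtimes B)\otimes\mathcal{I}_{\Delta_{1,2}}$ to vanish for $j>0$, so that the Leray spectral sequence degenerates. This vanishing follows from the short exact sequence $0\to M_{A_1}\to H^0(Y,A_1)\otimes\sO_Y\to A_1\to 0$ tensored by $\sO_Y\boxtimes B$ — more precisely, from $0\to (A_1\boxtimes B)\otimes\mathcal{I}_{\Delta_{1,2}}\to H^0(Y,A_1)\otimes(\sO_Y\boxtimes B)\to (A_1\boxtimes B)\to 0$ (restriction of $A_1$ to the diagonal copy being $A_1$ itself): pushing forward along $\pr_2$, the middle term has no higher direct images and $R^j\pr_{2,*}(A_1\boxtimes B)=H^j(Y,A_1)\otimes B=0$ for $j>0$ by hypothesis, so the long exact sequence gives $R^j\pr_{2,*}((A_1\boxtimes B)\otimes\mathcal{I}_{\Delta_{1,2}})=0$ for $j>0$ as well.

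For the inductive step, suppose the statement holds for $k-1$. I would apply the projection $\pr\colon Y^{k+1}\to Y^k$ forgetting the $k$-th factor (so the last factor, carrying $B$, becomes the $k$-th factor of $Y^k$), and use base change / the projection formula to compute $R^\bullet\pr_*$ of $(A_1\boxtimes\cdots\boxtimes A_k\boxtimes B)\otimes(\mathcal{I}_{\Delta_{1,k+1}}\otimes\cdots\otimes\mathcal{I}_{\Delta_{k,k+1}})$. The key observation is that the ideals $\mathcal{I}_{\Delta_{i,k+1}}$ for $i=1,\dots,k-1$ are pulled back from nothing special, but rather that $\pr$ restricted to the $k$-th factor realizes $M_{A_k}$ again; combining this with the other kernel-bundle factors, $\pr_*$ of the whole sheaf should be $(M_{A_1}\otimes\cdots\otimes M_{A_{k-1}})\boxtimes' (\text{something})$. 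Concretely I would instead iterate the base case: write the product of ideal sheaves as successive extensions and peel off one factor $\mathcal{I}_{\Delta_{k,k+1}}$ at a time, each time using the $k=1$ computation fiberwise, so that $H^i(Y^{k+1},\cdots)\cong H^i(Y^k,(A_1\boxtimes\cdots\boxtimes A_{k-1}\boxtimes (M_{A_k}\otimes B))\otimes(\mathcal{I}_{\Delta_{1,k}}\otimes\cdots\otimes\mathcal{I}_{\Delta_{k-1,k}}))$, and then apply the inductive hypothesis with the vector bundle $B$ replaced by $M_{A_k}\otimes B$ and one fewer line bundle. One must check that $M_{A_k}\otimes B$ is still just a vector bundle on $Y$ (it is) so the inductive hypothesis applies verbatim; the line bundles $A_1,\dots,A_{k-1}$ retain the needed global generation and vanishing hypotheses.

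The main obstacle I anticipate is bookkeeping: one has to be careful that the higher direct images along each successive projection vanish, which requires that at each stage the "$B$-part" is a sheaf all of whose relevant twists have vanishing higher cohomology — but since $B$ is pulled back from a point-factor and gets multiplied only by pullbacks of globally generated, cohomologically trivial line bundles and by kernel bundles, the needed vanishings propagate. A clean way to organize this is to note that on $Y^{k+1}$ the sheaf in question is the pullback of $B$ from the last factor tensored with the pullback from $Y^{k+1}$ (forgetting $B$) of $(A_1\boxtimes\cdots\boxtimes A_k\boxtimes\sO_Y)\otimes\prod_i\mathcal{I}_{\Delta_{i,k+1}}$, whose pushforward to the last factor is $M_{A_1}\otimes\cdots\otimes M_{A_k}$ with no higher direct images (this last vanishing being the genuine content, proved by induction exactly as in the base case using the exact sequences defining the $M_{A_i}$). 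Then a single application of the projection formula and Leray along the projection to the last factor finishes the proof. I would present the argument in this last form, as it isolates the one nontrivial vanishing and makes the rest formal.
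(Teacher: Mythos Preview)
Your proposal is correct, and the ``clean way'' you arrive at in the last paragraph---push forward along the projection $\pr_{k+1}\colon Y^{k+1}\to Y$ to the last factor, identify the direct image as $M_{A_1}\otimes\cdots\otimes M_{A_k}\otimes B$ with vanishing higher direct images, then apply Leray---is exactly the paper's proof. The earlier inductive organization you sketch is a valid way to verify the vanishing of the higher direct images, but since you yourself recognize that the direct projection-to-the-last-factor argument is cleaner, you should simply present that.
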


\begin{proof}
We use the arguments in Inamdar's paper \cite{Inamdar}. 
Let $\pr_{k+1} \colon Y^{k+1} \to Y$ be the projection to the last factor. Note that
       $$
       R^i \pr_{k+1,*} (A_1 \boxtimes \cdots \boxtimes A_k \boxtimes B) \otimes (\mathcal{I}_{\Delta_{1,k+1}}\otimes \cdots \otimes \mathcal{I}_{\Delta_{k,k+1}}) \cong \begin{cases} M_{A_1} \otimes \cdots \otimes M_{A_k} \otimes B & \text{for $i=0$} \\ 0 & \text{for $i>0$}. \end{cases}
       $$
Considering the Leray spectral sequence for $\pr_{k+1}$, we obtain the lemma. 
\end{proof}

\begin{lemma}\label{lem:suffamplelemma}
	Let $Y$ be a projective scheme and consider coherent sheaves $\mathcal{F},\mathcal{G}$  on $Y$ and  two line bundles $A,B$ on $Y$. 
	\begin{enumerate}
		\item A morphism $\mathcal{F}\to \mathcal{G}$ is surjective if and only if the induced map on global sections $H^0(X,\mathcal{F}\otimes A ) \to H^0(X,\mathcal{G}\otimes A)$ is surjective for $A$ sufficiently ample.
		\item If $A$ is sufficiently ample with respect to a fixed $m\geq 0$, then $A$ is $d$-very ample.
		\item The sheaf $\mathcal{F}\otimes A$ is globally generated for $A$ sufficiently ample.
		\item The multiplication map $H^0(Y,\mathcal{F}\otimes A) \otimes H^0(Y,\mathcal{G}\otimes B) \to H^0(Y,\mathcal{F}\otimes \mathcal{G}\otimes A\otimes B)$ is surjective if $A,B$ are sufficiently ample.
		\item Let $W$ be a projective variety, $\mathcal{H}$ be a coherent sheaf on $W\times Y$, and $L$ be a line bundle on $W$. Then the multiplication map $H^0(W\times Y,\mathcal{H}\otimes (L\boxtimes A)) \otimes H^0(Y,B) \to H^0(W\times Y, \mathcal{H}\otimes (L\boxtimes (A\otimes B)))$ is surjective if $A,B,L$ are sufficiently ample. 
		\item Let $f\colon Z\to Y$ be a morphism from a projective scheme $Z$, and $\mathcal{H}$ be a coherent sheaf on $Z$. For a fixed integer $i > 0$, we have that $H^i(Z,\mathcal{H}\otimes f^*A)=0$ for $A$ sufficienty ample if and only if $R^if_*\mathcal{H}=0$.
		\item $H^i(Y,M_{A_{1}}\otimes \cdots \otimes M_{A_n}\otimes B) = 0$ for all $i>0$ if $A_1,\ldots,A_n,B$ are sufficiently ample line bundles on $Y$.
		\item If $A$ is sufficiently ample, then it is very ample, $Y \subseteq  \nP(H^0(Y,A))=\nP^r$ is arithmetically normal (i.e., the restriction map $H^0(\nP^r, \sO_{\nP^r}(\ell))\to H^0(Y, \sO_Y(\ell))$ is surjective for every $\ell \geq 1$), and the ideal $I(Y, A)$ is generated by quadrics.
	\end{enumerate}
\end{lemma}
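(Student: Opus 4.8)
The plan is to verify the eight items roughly in order, with Fujita--Serre vanishing and elementary Castelnuovo--Mumford regularity as the only real inputs; several items reduce to earlier ones. For (1): let $\mathcal{K}$ and $\mathcal{C}$ be the kernel and cokernel of $\mathcal{F}\to\mathcal{G}$. If $\mathcal{C}=0$, then $H^1(Y,\mathcal{K}\otimes A)=0$ for $A$ sufficiently ample, so the long exact sequence gives surjectivity on $H^0$. If $\mathcal{C}\neq0$, then for $A$ sufficiently ample $H^0(Y,\mathcal{G}\otimes A)\to H^0(Y,\mathcal{C}\otimes A)$ is surjective (the kernel of $\mathcal{G}\twoheadrightarrow\mathcal{C}$ is a fixed sheaf) onto a nonzero space, so the image of $H^0(\mathcal{F}\otimes A)$ is proper. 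For (6): the projection formula gives $R^qf_*(\mathcal{H}\otimes f^*A)\cong R^qf_*\mathcal{H}\otimes A$, and for $A$ sufficiently ample all $H^p(Y,R^qf_*\mathcal{H}\otimes A)$ with $p>0$ vanish, so the Leray spectral sequence collapses to $H^i(Z,\mathcal{H}\otimes f^*A)\cong H^0(Y,R^if_*\mathcal{H}\otimes A)$; this is zero if and only if $R^if_*\mathcal{H}=0$, since $H^0$ of a fixed coherent sheaf twisted by a sufficiently ample line bundle vanishes exactly when the sheaf is zero. For (7): Lemma \ref{lem:M_L} (applicable since sufficiently ample bundles are globally generated with vanishing higher cohomology) identifies the group with $H^i$ on $Y^{n+1}$ of the \emph{fixed} coherent sheaf $\mathcal{I}_{\Delta_{1,n+1}}\otimes\cdots\otimes\mathcal{I}_{\Delta_{n,n+1}}$ twisted by the sufficiently ample line bundle $A_1\boxtimes\cdots\boxtimes A_n\boxtimes B$, so Fujita--Serre on $Y^{n+1}$ finishes it.

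\textbf{Very ampleness: (2) and (3).} For (2), set $P:=\Hilb^{d+1}(Y)$, a projective scheme, with universal subscheme $\mathcal{Z}\subseteq P\times Y$, and let $p,q$ be the two projections. Pushing $0\to\mathcal{I}_{\mathcal{Z}}\to\mathcal{O}_{P\times Y}\to\mathcal{O}_{\mathcal{Z}}\to0$ forward by $p$, and using $R^1p_*q^*A=H^1(Y,A)\otimes\mathcal{O}_P=0$ for $A$ sufficiently ample, the cokernel of the evaluation $H^0(Y,A)\otimes\mathcal{O}_P\to p_*(\mathcal{O}_{\mathcal{Z}}\otimes q^*A)$ equals $R^1p_*(\mathcal{I}_{\mathcal{Z}}\otimes q^*A)$. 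Since the family $\{\mathcal{I}_\xi\}_{\xi\in P}$ is bounded (equivalently, since $q^*A$ is $p$-ample), this vanishes for $A$ sufficiently ample by the relative form of Fujita--Serre vanishing; restricting the resulting surjection at a point $[\xi]$ gives surjectivity of $H^0(Y,A)\to H^0(Y,A\otimes\mathcal{O}_\xi)$ for every $\xi$ of length $d+1$, i.e.\ $d$-very ampleness. For (3), fix a very ample $H$ on $Y$; by Fujita--Serre vanishing, after enlarging the threshold by a fixed power of $H$ one may arrange that $\mathcal{F}\otimes A\otimes H^{\otimes(-i)}$ has vanishing $i$-th cohomology for $1\le i\le\dim Y$, i.e.\ $\mathcal{F}\otimes A$ is $0$-regular with respect to $H$, hence globally generated.

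\textbf{Multiplicativity: (4), (5), (8).} For (4) one reduces to $\mathcal{F}=\mathcal{G}=\mathcal{O}_Y$. Write $A=A_1\otimes A_2$ with $A_1$ a fixed sufficiently ample bundle making $\mathcal{F}\otimes A_1$ globally generated by $N:=h^0(\mathcal{F}\otimes A_1)$ sections; this gives a \emph{fixed} short exact sequence $0\to M'\to\mathcal{O}_Y^{\oplus N}\to\mathcal{F}\otimes A_1\to0$ and, after twisting by $A_2$, a surjection $A_2^{\oplus N}\twoheadrightarrow\mathcal{F}\otimes A$ with kernel $M'\otimes A_2$. Comparing the multiplication maps of $A_2^{\oplus N}$ and of $\mathcal{F}\otimes A$ against $\mathcal{G}\otimes B$ in the evident commutative square, and using Fujita--Serre for the fixed sheaves $M'$ and $M'\otimes\mathcal{G}$ together with the case $\mathcal{F}=\mathcal{O}_Y$ of (4) (which, applying the same reduction to the second factor, comes down to $\mathcal{F}=\mathcal{G}=\mathcal{O}_Y$), one obtains (4) in general. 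The base case ``$H^0(A)\otimes H^0(B)\to H^0(A\otimes B)$ surjective for $A,B$ sufficiently ample'' is the classical fact that a sufficiently positive section ring is generated in degree one (see \cite[Section 3]{BBF24+} and the references therein). For (5), pushing forward along $\pr_W\colon W\times Y\to W$ --- which is exact on $\mathcal{H}\otimes(\mathcal{O}_W\boxtimes A)$ for $A$ sufficiently ample by (6) and boundedness of the fibers --- reduces the claim to the surjectivity of a morphism of sheaves on $W$ supplied by the relative form of (4), after which (1) applied on $W$ with $L$ sufficiently ample concludes. Finally, for (8): very ampleness is (2) with $d=1$; arithmetic normality follows by induction on $\ell$ from surjectivity of $H^0(Y,A^{\otimes\ell})\otimes H^0(Y,A)\to H^0(Y,A^{\otimes(\ell+1)})$, which is the case $\mathcal{F}=\mathcal{G}=\mathcal{O}_Y$ of (4), powers of a sufficiently ample bundle being again sufficiently ample; and generation of $I(Y,A)$ by quadrics --- there being no linear forms, since $Y$ is nondegenerate --- is the classical statement on varieties defined by quadratic equations, which may be quoted from \cite[Section 3]{BBF24+} or \cite{SS2011}.

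\textbf{Main obstacle.} The spectral-sequence manipulations and diagram chases are routine; the real content lies in the uniformity required in (2), (4) and (5), where the sheaf whose higher cohomology must vanish --- the universal ideal sheaf over the Hilbert scheme, the kernel sheaves $M'\otimes A_2$, the direct images along $\pr_W$ --- varies with the ample twist, so Fujita--Serre for a single sheaf does not literally apply. The device used throughout is to isolate a \emph{fixed} coherent sheaf, on a product with the relevant parameter space, and apply (relative) Fujita--Serre vanishing to it; packaging the arguments this way is where essentially all the work of the lemma goes.
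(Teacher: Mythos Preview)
Your arguments for (1), (6), (7) match the paper's. For (2), (3), and (8) you take different but valid routes: the paper proves (2) by embedding $Y$ via a fixed very ample $H$ and noting that every length-$(d+1)$ subscheme sits inside a complete intersection of members of $|H|$, so a single Koszul-type Fujita--Serre bound suffices; it deduces (3) directly from the case $d=0$ of (2); and for (8) it gives a self-contained proof using (7) and the criterion that $H^1(\wedge^2 M_A\otimes A^{\otimes\ell})=0$ implies generation by quadrics, rather than citing outside. The substantive divergence is in (4) and (5). The paper handles both in one line via the \emph{diagonal trick}: on $Y\times Y$ (resp.\ $W\times Y\times Y$) one has the fixed surjection $\mathcal{F}\boxtimes\mathcal{G}\twoheadrightarrow(\mathcal{F}\boxtimes\mathcal{G})|_{\Delta}$ (resp.\ $\pr_{W\times Y}^*\mathcal{H}\twoheadrightarrow\pr_{W\times Y}^*\mathcal{H}\otimes\pr_{Y\times Y}^*\mathcal{O}_{\Delta}$), and tensoring by $A\boxtimes B$ (resp.\ $L\boxtimes A\boxtimes B$) and applying (1) immediately gives the multiplication map on $H^0$. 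This sidesteps the entire uniformity issue you flag in your ``Main obstacle'' paragraph, because the kernel sheaf is genuinely fixed and $A\boxtimes B$ is a single sufficiently ample line bundle on the product.

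Your reduction for (4) can be made to work, but your sketch of (5) has a real gap. After pushing forward to $W$ you want to apply (1) to a map $\mathcal{E}_A\otimes H^0(B)\to\mathcal{E}_{A\otimes B}$ of sheaves on $W$, but both sheaves and the map depend on $A,B$; the threshold for $L$ produced by (1) would then depend on $A,B$, contrary to the independent thresholds implicit in the statement. You also invoke a ``relative form of (4)'' to get surjectivity of this sheaf map, but nothing of the sort has been established---unwinding it, you would need $R^1\pr_{W,*}(\mathcal{H}\otimes(\mathcal{O}_W\boxtimes(A\otimes M_B)))=0$, where $M_B$ varies with $B$. One can rescue this by running the diagonal trick \emph{relatively} over $W$, but at that point you have essentially rediscovered the paper's argument. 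The moral is that your ``isolate a fixed sheaf on a product'' device, which you correctly identify as the crux, is exactly what the diagonal construction accomplishes in a single step; your proof of (5) does not actually carry it out.
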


\begin{proof}
(1) This is essentially \cite[Lemma 5.3]{Ago22}. Considering the cokernel of the map $\mathcal{F} \to \mathcal{G}$ and applying Fujita--Serre vanishing, one can easily check the assertion.

\smallskip

\noindent (2) The case $d=1$ is proven in \cite[Proposition 3.6]{BBF24+}, and the general case is analogous. Here we give an alternative proof. First, take a very ample line bundle $H$ such that the embedding $Y \subseteq \nP (H^0(Y, H))$ has sufficiently large degree so that every length $d+1$ finite subscheme $\xi$ of $Y$ is contained in a finite subscheme $Z$ of $Y$ cut out by a fixed number of members of the complete linear system $\lvert H \rvert$. As the Koszul resolutions of such $Z$ are of the same form, Fujita--Serre vanishing implies that $\operatorname{ev}_{A,Z}$ is surjective whenever $A$ is sufficiently ample. This implies that $\operatorname{ev}_{A,\xi}$ is surjective for any finite subscheme $\xi \subseteq Y$ of length $m+1$

\smallskip

\noindent (3) Up to twisting $\mathcal{F}$ by a certain ample line bundle, we can assume that $\mathcal{F}$ is globally generated. Then it is enough to show that $A$ itself is globally generated if $A$ is sufficiently ample, and this is the case $d=0$ of point (2).

\smallskip

\noindent (4) Consider the diagonal $\Delta_{1,1} \subseteq Y^2$ and the surjective morphism $\mathcal{F}\boxtimes \mathcal{G} \to (\mathcal{F}\boxtimes \mathcal{G})\otimes \sO_{\Delta_{1,1}}$. Twisting by $A\boxtimes B$ and taking global sections we obtain the multiplication map that we care about. Then we just need to observe that if $A,B$ are sufficiently ample on $Y$ then $A\boxtimes B$ is sufficiently ample on $Y^2$, so that the conclusion follows from point (1).

\smallskip

\noindent (5) The proof of this point is analogous to the previous one. In the space $W\times Y\times Y$, we consider the surjective morphism $\pr_{W\times Y}^*\mathcal{H} \to \pr_{W\times Y}^*\mathcal{H}\otimes \pr_{Y\times Y}^*\mathcal{O}_{\Delta_{1,1}}$ and then we take global sections after tensoring with $L\boxtimes A\boxtimes B$.

\smallskip

\noindent (6) This is \cite[Lemma 4.3.10]{LazPosI}, but we give a proof for the reader's convenience.  Observe that for any $0\leq j\leq i$, we have $R^jf_*(\mathcal{H}\otimes f^*A) \cong R^jf_*\mathcal{H}\otimes A$ by the projection formula and if $A$ is sufficiently ample, then $H^{i-j}(Z,R^jf_*\mathcal{H}\otimes A)=0$ for all $j<i$ thanks to Fujita--Serre vanishing. By the Leray spectral sequence, we then see that $H^i(Z,\mathcal{H}\otimes f^*A) \cong H^0(Y,R^if_*\mathcal{H}\otimes A)$, and this latter group is zero for $A$ sufficiently ample if and only if $R^if_*\mathcal{H}=0$.

\smallskip

\noindent  (7) Thanks to point (2), we can assume that the $A_i$ are globally generated, so that the kernel bundles $M_{A_i}$ are well defined. By Lemma \ref{lem:M_L}, it is enough to show that
       \[ H^i(Y^{i+1},(A_1\boxtimes \cdots \boxtimes A_i\boxtimes B)\otimes (\mathcal{I}_{\Delta_{1,n+1}}\otimes \cdots \otimes \mathcal{I}_{\Delta_{n,n+1}})) = 0 \qquad \text{ for all } 1\leq i \leq n, \]
       and this is true by Fujita--Serre vanishing since $A_1,\ldots,A_n,B$ are sufficiently ample.

\smallskip

\noindent  (8) The fact that $A$ is very ample follows from point (2). It is then well-known (see e.g., \cite[Lemma 1.6]{EL1993}) that if $H^1(Y,A^{\otimes \ell}) = H^1(Y,M_A \otimes A^{\otimes \ell}) = H^1(X,\wedge^2 M_A\otimes A^{\otimes \ell}) = 0$ for all $\ell\geq 1$, then the embedding induced by $A$ is arithmetically normal and the ideal $I(Y,A)$ is generated by quadrics. The conclusion follows then from point (7) since $\wedge^2 M_A$ is a direct summand of $M_A^{\otimes 2}$ in characteristic different from two.
\end{proof}

We record an effective version of Lemma \ref{lem:suffamplelemma} (2) if $Y$ is smooth:

\begin{lemma}\label{lem:effhigherveryampleness}
	Let $Y$ be a smooth projective variety of dimension $n$, and $H, M$ be line bundles on $Y$ such that $H$ is very ample and $M$ is nef.  Then $L :=\omega_Y\otimes H^{\otimes j} \otimes M$ is $m$-very ample for all $j\geq n+1+m$.
\end{lemma}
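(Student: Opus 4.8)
The plan is to reduce $m$-very ampleness to a cohomology vanishing and then prove the latter by induction on $m$, combining Kodaira vanishing with Castelnuovo--Mumford regularity. Write $L = \omega_Y \otimes H^{\otimes j}\otimes M$ with $j \geq n+1+m$. Since $H^{\otimes j}\otimes M$ is ample ($j\ge 1$, $M$ nef), Kodaira vanishing gives $H^1(Y,L)=0$, so from the long exact sequence attached to $0\to\mathcal{I}_\xi\otimes L\to L\to\mathcal{O}_\xi\otimes L\to 0$ we see that $\operatorname{ev}_{L,\xi}$ is surjective for a finite subscheme $\xi$ if and only if $H^1(Y,\mathcal{I}_\xi\otimes L)=0$. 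Thus it suffices to prove: for every $m\geq 0$, every nef $M$, every $j\geq n+1+m$ and every $\xi$ of length $m+1$, one has $H^1(Y,\mathcal{I}_\xi\otimes L)=0$.

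For $m=0$ this says that $L=\omega_Y\otimes H^{\otimes j}\otimes M$ with $j\ge n+1$ is globally generated, which I would get from Mumford's theorem once I check that $L$ is $0$-regular with respect to $H$, i.e. $H^i(Y,\omega_Y\otimes H^{\otimes(j-i)}\otimes M)=0$ for all $i>0$: this is trivial for $i>n$, and for $1\le i\le n$ it follows from Kodaira vanishing since then $H^{\otimes(j-i)}\otimes M$ is ample ($j-i\ge 1$, $M$ nef). For the inductive step, assume the assertion for $m-1$, so that $\omega_Y\otimes H^{\otimes j'}\otimes M'$ is $(m-1)$-very ample whenever $j'\ge n+m$ and $M'$ is nef. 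Given $\xi$ of length $m+1$, choose a subscheme $\xi'\subset\xi$ of length $m$ with $\mathcal{I}_{\xi'}/\mathcal{I}_\xi\cong\mathcal{S}$, a skyscraper of length one supported at a point $p\in\Supp\xi$ (such $\xi'$ exists because $\mathcal{O}_\xi$ has a simple $\mathcal{O}_Y$-submodule). Since $j\ge n+m$, the inductive hypothesis applies to $L$ itself, so $L$ is $(m-1)$-very ample and hence $H^1(Y,\mathcal{I}_{\xi'}\otimes L)=0$ (as $\xi'$ has length $m$, and $H^1(Y,L)=0$). Feeding this into the long exact sequence of $0\to\mathcal{I}_\xi\otimes L\to\mathcal{I}_{\xi'}\otimes L\to\mathcal{S}\to 0$ shows that $H^1(Y,\mathcal{I}_\xi\otimes L)=0$ as soon as $H^0(Y,\mathcal{I}_{\xi'}\otimes L)\to H^0(\mathcal{S})$ is surjective; and since $\mathfrak{m}_p\mathcal{I}_{\xi'}\subseteq\mathcal{I}_\xi$, the sheaf $\mathcal{S}$ (unchanged upon twisting by the line bundle $L$) is a quotient of the fibre of $\mathcal{I}_{\xi'}\otimes L$ at $p$, so this surjectivity follows once $\mathcal{I}_{\xi'}\otimes L$ is globally generated at $p$.

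It remains to show $\mathcal{I}_{\xi'}\otimes L$ is globally generated, and once more I would deduce this from Mumford's theorem by checking $0$-regularity with respect to $H$. For $2\le i\le n$, the standard exact sequence $0\to\mathcal{I}_{\xi'}\to\mathcal{O}_Y\to\mathcal{O}_{\xi'}\to 0$ (twisted by $\omega_Y\otimes H^{\otimes(j-i)}\otimes M$) and the finiteness of $\mathcal{O}_{\xi'}$ identify $H^i(Y,\mathcal{I}_{\xi'}\otimes\omega_Y\otimes H^{\otimes(j-i)}\otimes M)$ with $H^i(Y,\omega_Y\otimes H^{\otimes(j-i)}\otimes M)$, which vanishes by Kodaira ($j-i\ge m+1\ge 1$); for $i>n$ it vanishes trivially; and for $i=1$ the required vanishing $H^1(Y,\mathcal{I}_{\xi'}\otimes\omega_Y\otimes H^{\otimes(j-1)}\otimes M)=0$ holds because $\omega_Y\otimes H^{\otimes(j-1)}\otimes M$ has exponent $j-1\ge n+m$, hence is $(m-1)$-very ample by the inductive hypothesis, hence separates the length-$m$ scheme $\xi'$ (and its own $H^1$ vanishes by Kodaira). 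The one delicate point is exactly this bookkeeping: the inductive hypothesis must be invoked twice, once for $L$ and once for $L\otimes H^{\otimes(-1)}$, and the bound $j\ge n+1+m$ is sharp for both invocations (the second needs $j-1\ge n+m$). Everything else is formal: Kodaira vanishing, the fact that $\mathcal{O}_\xi$ and $\mathcal{O}_{\xi'}$ are supported in dimension zero, and Mumford's criterion that $0$-regularity implies global generation.
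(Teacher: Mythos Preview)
Your proof is correct and self-contained, but the paper's own proof takes a much shorter route: it simply notes that $\omega_Y\otimes H^{\otimes(n+1)}\otimes M$ is $0$-very ample (a standard Fujita-type consequence of Kodaira vanishing and Castelnuovo--Mumford regularity, essentially your base case) and that $H$ is $1$-very ample, and then invokes the theorem of Hinohara--Takahashi--Terakawa \cite{HTT} that the tensor product of an $a$-very ample and a $b$-very ample bundle is $(a+b)$-very ample. Writing $L=(\omega_Y\otimes H^{\otimes(n+1)}\otimes M)\otimes H^{\otimes(j-n-1)}$ with $j-n-1\ge m$ then finishes immediately.

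What you have done, in effect, is give a direct inductive proof of the special case of \cite{HTT} that is needed here, using only Kodaira vanishing and Mumford's regularity criterion. This buys you a proof with no external black box beyond those two classical facts, and it makes transparent exactly where the bound $j\ge n+1+m$ is consumed (once for $L$ and once for $L\otimes H^{-1}$, as you note). The paper's approach buys brevity and modularity: the tensor-product statement is a clean, reusable fact, and citing it keeps the lemma to two lines.
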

\begin{proof}
Note that $\omega_Y \otimes H^{\otimes (n+1)} \otimes M$ is $0$-very ample (see e.g., \cite[Example 1.8.23]{LazPosI}) and $H$ is $1$-very ample. Thus  \cite[Theorem 1.1]{HTT} shows that $L$ is $m$-very ample as soon as $j \geq n+1+m$.
\end{proof}

\subsection{Multisymmetric polynomials}
We start with some notation: let $V$ be a $\kk$-linear representation of $\mathfrak{S}_d$ and let  $\varepsilon_d\colon V \to \{\pm 1\}$ be the alternating character of $\mathfrak{S}_d$. Then we denote by $V\otimes \varepsilon_d$ the the same vector space $V$, but with the action of $\mathfrak{S}_d$ twisted by $\varepsilon_d$ (\footnote{This isomorphic to the tensor product of $V$ with the one-dimensional representation corresponding to $\mathfrak{S}_d$, but we want to think of $V$ and $V\otimes \varepsilon_d$ has having the same base set}). We have
\[ V^{\mathfrak{S}_d} = \{ v\in V \,|\, \sigma\cdot v = v \}~~\text{ and }~~ (V\otimes \varepsilon_d)^{\mathfrak{S}_d} = \{ v\in V \,|\, \sigma\cdot v = \varepsilon_d(\sigma)v \} \]
Furthermore, we can also define $V\otimes \varepsilon_d^{\ell}$ for any $\ell\in \nZ$ in an analogous way: this is isomorphic to $V$ or $V\otimes \varepsilon_d$ according to whether $\ell$ is even or odd. 
\medskip

Set now $X:= \nA^n = \Spec \kk[x_1,\ldots,x_n]$, and take $X^d = \Spec  \kk[x_{ij}]$ for $1\leq i\leq d$ and $1\leq j \leq n$. The closed points of $X^d$ can be thought of as $n\times d$ matrices
\[ 
\begin{pmatrix}
	x_{11} & x_{12} & x_{13} & \cdots & x_{1d} \\
	x_{21} & x_{22} & x_{23} & \cdots & x_{2d} \\
	\vdots & \vdots & \vdots & \ddots & \vdots \\
	x_{n1} & x_{n2} & x_{n3} & \cdots & x_{nd}	 
\end{pmatrix} 
\]
where each column corresponds to a point in $\nA^n$. This space has a natural action of the symmetric group $\mathfrak{S}_{d}$ and the ring of multisymmetric functions is $\kk[x_{ij}]^{\mathfrak{S}_d}$, while the space of multialternating functions is  $(\kk[x_{ij}]\otimes \varepsilon_d)^{\mathfrak{S}_d}$. This generates an ideal
\[ J_d := ((\kk[x_{ij}]\otimes \varepsilon_d)^{\mathfrak{S}_d}) \subseteq \kk[x_{ij}] \] 
that it generates inside $\kk[x_{ij}]$.
Since an alternating polynomial vanishes if two columns of the above matrix are equal, it follows that $J_d \subseteq I(\Delta_d)$, where $I(\Delta_d) \subseteq \nC[x_{ij}]$ is the ideal of the big diagonal 
$$
\Delta_d := \bigcup_{1 \leq i < j \leq d} \Delta_{i,j} \subseteq X^d. 
$$
For any $\ell\geq 1$ we have natural multiplication maps
\begin{equation}\label{eq:mapalt} 
	m^{\ell}\colon ((\kk[x_{ij}]\otimes \varepsilon_d)^{\mathfrak{S}_d})^{\otimes \ell} \longrightarrow (I(\Delta_d)^{\ell}\otimes \varepsilon_d^{\ell})^{\mathfrak{S}_d}; \quad (f_1\otimes \dots \otimes f_{\ell}) \longmapsto f_1 \dots f_{\ell} 
\end{equation}
of $\kk[x_{ij}]^{\mathfrak{S}_d}$-modules. What we need is the following:

\begin{proposition}\label{prop:mappoly}
	The image of the map $m^{\ell}$ of \eqref{eq:mapalt} is:
	\[ \operatorname{Im} m^{\ell} = \left(J_d^{\ell} \otimes \varepsilon_d^{\ell}\right)^{\mathfrak{S}_d} = \left(J_d^{\ell-1} \otimes \varepsilon_d^{\ell}\right)^{\mathfrak{S}_d}. \]
\end{proposition}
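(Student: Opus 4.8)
The plan is to analyze the map $m^\ell$ by working with the $\mathfrak{S}_d$-module structure and exploiting the fact that tensoring with the sign representation is compatible with multiplication. First I would observe that for any $f_1, \ldots, f_\ell \in (\kk[x_{ij}]\otimes\varepsilon_d)^{\mathfrak{S}_d}$, the product $f_1\cdots f_\ell$ lies in $\kk[x_{ij}]$ and transforms under $\sigma\in\mathfrak{S}_d$ by the sign $\varepsilon_d(\sigma)^\ell$; moreover it visibly lies in the $\ell$-th power of the ideal $J_d$. Hence $\operatorname{Im} m^\ell \subseteq (J_d^\ell \otimes \varepsilon_d^\ell)^{\mathfrak{S}_d}$, and the reverse containment is the content to be proved. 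The equality $(J_d^\ell\otimes\varepsilon_d^\ell)^{\mathfrak{S}_d} = (J_d^{\ell-1}\otimes\varepsilon_d^\ell)^{\mathfrak{S}_d}$ is a separate, essentially formal point I would handle at the end.

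For the reverse containment $\operatorname{Im} m^\ell \supseteq (J_d^\ell\otimes\varepsilon_d^\ell)^{\mathfrak{S}_d}$, the key step is the following. An element of $J_d^\ell$ is a $\kk[x_{ij}]$-linear combination of products $g_1\cdots g_\ell$ with each $g_t$ an alternating polynomial (here I use $J_d = ((\kk[x_{ij}]\otimes\varepsilon_d)^{\mathfrak{S}_d})$, so $J_d^\ell$ is spanned over $\kk[x_{ij}]$ by such $\ell$-fold products). Thus a general element of $J_d^\ell$ has the form $\sum_\alpha h_\alpha \cdot g_{\alpha,1}\cdots g_{\alpha,\ell}$ with $h_\alpha\in\kk[x_{ij}]$. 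Given such an element that happens to lie in $(J_d^\ell\otimes\varepsilon_d^\ell)^{\mathfrak{S}_d}$, I would apply the Reynolds operator (averaging over $\mathfrak{S}_d$ against the appropriate sign) to recover the element from its symmetrization; since each product $g_{\alpha,1}\cdots g_{\alpha,\ell}$ already has the sign-$\varepsilon_d^\ell$ equivariance, averaging $\sum_\alpha h_\alpha g_{\alpha,1}\cdots g_{\alpha,\ell}$ amounts to replacing each $h_\alpha$ by its multisymmetrization $\widetilde{h}_\alpha := \frac{1}{d!}\sum_{\sigma} \sigma\cdot h_\alpha \in \kk[x_{ij}]^{\mathfrak{S}_d}$. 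So the element equals $\sum_\alpha \widetilde{h}_\alpha\, g_{\alpha,1}\cdots g_{\alpha,\ell}$. Now $\widetilde h_\alpha$ is multisymmetric, and the crucial observation is that a multisymmetric polynomial times an alternating polynomial is again alternating: $\widetilde h_\alpha g_{\alpha,1} \in (\kk[x_{ij}]\otimes\varepsilon_d)^{\mathfrak{S}_d}$. Therefore each summand $\bigl(\widetilde h_\alpha g_{\alpha,1}\bigr)\otimes g_{\alpha,2}\otimes\cdots\otimes g_{\alpha,\ell}$ is a legitimate input for $m^\ell$, and the element is in the image. This proves the reverse containment.

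For the final identity $(J_d^\ell\otimes\varepsilon_d^\ell)^{\mathfrak{S}_d}=(J_d^{\ell-1}\otimes\varepsilon_d^\ell)^{\mathfrak{S}_d}$: the containment $\subseteq$ is clear since $J_d^\ell\subseteq J_d^{\ell-1}$. For $\supseteq$, take $f\in (J_d^{\ell-1}\otimes\varepsilon_d^\ell)^{\mathfrak{S}_d}$. Writing $f$ (via the Reynolds argument above) as $\sum_\alpha \widetilde h_\alpha\, g_{\alpha,1}\cdots g_{\alpha,\ell-1}$ with $\widetilde h_\alpha$ multisymmetric and $g_{\alpha,t}$ alternating, the sign-$\varepsilon_d^\ell$ equivariance of $f$ together with the sign-$\varepsilon_d^{\ell-1}$ equivariance of the products forces the symmetric coefficients $\widetilde h_\alpha$ to instead be \emph{alternating} (i.e. to lie in $(\kk[x_{ij}]\otimes\varepsilon_d)^{\mathfrak{S}_d}\subseteq J_d$); hence each term actually lies in $J_d^\ell$, giving $f\in(J_d^\ell\otimes\varepsilon_d^\ell)^{\mathfrak{S}_d}$. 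I expect the main obstacle to be a clean bookkeeping of the sign twists — making sure the parity of $\ell$ is tracked correctly through the Reynolds operator and through the "multisymmetric times alternating is alternating" step — rather than any deep input; no geometry or commutative-algebra machinery beyond linear reductivity of $\mathfrak{S}_d$ in characteristic zero should be needed.
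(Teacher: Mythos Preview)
Your approach is the same Reynolds-operator argument as the paper's, but your organization creates redundancy and your final paragraph contains a genuine slip. The paper proves the single chain
\[
\operatorname{Im} m^{\ell} \subseteq \left(J_d^{\ell}\otimes\varepsilon_d^{\ell}\right)^{\mathfrak{S}_d} \subseteq \left(J_d^{\ell-1}\otimes\varepsilon_d^{\ell}\right)^{\mathfrak{S}_d} \subseteq \operatorname{Im} m^{\ell},
\]
establishing the last inclusion directly by applying the $\varepsilon_d^{\ell}$-twisted Reynolds operator to $gf_1\cdots f_{\ell-1}\in J_d^{\ell-1}$. Your middle paragraph (Reynolds on $J_d^{\ell}$) is correct but unnecessary once the last inclusion is in hand.

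The error is in your last paragraph. You claim that Reynolds applied to $J_d^{\ell-1}$ produces a decomposition with \emph{multisymmetric} $\widetilde h_\alpha$, and then that the equivariances ``force'' each $\widetilde h_\alpha$ to be alternating. Neither step is right. First, the parity is off: since the product $g_{\alpha,1}\cdots g_{\alpha,\ell-1}$ transforms by $\varepsilon_d^{\ell-1}$, the $\varepsilon_d^{\ell}$-twisted Reynolds operator sends $h_\alpha$ to $\frac{1}{d!}\sum_\sigma \varepsilon_d(\sigma)\,\sigma(h_\alpha)$, which is \emph{alternating}, not symmetric. Second, even if you had a decomposition with symmetric coefficients, the equivariance of the total sum $f$ does not constrain the individual $\widetilde h_\alpha$ (the decomposition is not unique, and there can be cancellation). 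The fix is exactly what the paper does: compute the Reynolds projection correctly to get alternating coefficients directly, whence each term lies in $\operatorname{Im} m^{\ell}$ (and a fortiori in $J_d^{\ell}$). This is precisely the ``bookkeeping of the sign twists'' you anticipated as the main obstacle.
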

\begin{proof}
By construction it holds that $\operatorname{Im} m^{\ell} \subseteq \left(J_d^{\ell} \otimes \varepsilon_d^{\ell}\right)^{\mathfrak{S}_d} \subseteq \left(J_d^{\ell-1} \otimes \varepsilon_d^{\ell}\right)^{\mathfrak{S}_d}$. We will now show that $\left(J_d^{\ell-1} \otimes \varepsilon_d^{\ell}\right)^{\mathfrak{S}_d}\subseteq \operatorname{Im} m^{\ell}$. We know that $\left(J_d^{\ell-1} \otimes \varepsilon_d^{\ell}\right)^{\mathfrak{S}_d}$ is the image of the projection
	\begin{align*}  
		\pi^{\mathfrak{S}_d}_d&\colon \left(J_d^{\ell-1} \otimes \varepsilon_d^{\ell}\right) \longrightarrow \left(J_d^{\ell-1} \otimes \varepsilon_d^{\ell}\right)^{\mathfrak{S}_{d}};\quad f\longmapsto \frac{1}{d!}\sum_{\sigma\in \mathfrak{S}_d} \varepsilon_{d}^{\ell}(\sigma)\sigma(f).
	\end{align*}
Observe that an element in $J_d^{\ell-1}\otimes \varepsilon_d^{\ell}$ is a sum of elements of the form $g\cdot f_1 \dots  f_{\ell-1}$ with $g\in \kk[x_{ij}],f_i \in (\kk[x_{ij}]\otimes \varepsilon_d)^{\mathfrak{S}_d}$. For any such element it is easy to see that
\begin{align*} 
\pi_d^{\mathfrak{S}_d}(gf_1 \dots  f_{\ell-1}) &= \left( \frac{1}{d!}\sum_{\sigma\in \mathfrak{S}_d} \varepsilon_d(\sigma)^{2\ell-1}\cdot \sigma(g) \right)\cdot f_1\dots f_{\ell-1} \\
&  = \left( \frac{1}{d!}\sum_{\sigma\in \mathfrak{S}_d} \varepsilon_d(\sigma)\cdot \sigma(g) \right)\cdot f_1\dots f_{\ell-1} 
\end{align*}
and since the first factor in the last product belongs to $(\kk[x_{ij}]\otimes \varepsilon_d)^{\mathfrak{S}_d}$, we see that $\pi_d^{\mathfrak{S}_d}(gf_1 \dots  f_{\ell-1}) \in \operatorname{Im} m_d$.
\end{proof}

\begin{remark}\label{rmk:equalityideals}
	Assume that $J_d = I(\Delta_d)$: then it follows that $(J_d^{\ell}\otimes \varepsilon_d^{\ell}) ^{\mathfrak{S}_d}=(I(\Delta_d)^{\ell}\otimes \varepsilon_d^{\ell})^{\mathfrak{S}_d}$ for all $\ell\geq 1$.
	It is straightforward to show that $J_d = I(\Delta_d)$ if $n=1$ or $d=1$. If instead $n=2$ and $d$ is arbitrary, then the equality $J_d = I(\Delta_d)$ still holds, but it is a highly nontrivial theorem due to Haiman \cite[Corollary 3.8.3]{Haiman} (see also \cite[Theorem 1.1]{Haimanlectures}). We could also check the equality  via computer algebra if $n=d=3$. For all other cases, this is still open to the best knowledge of the authors. In the next proposition, we prove a weaker result, which will be however essential for Theorem \ref{thm:SS}. 
\end{remark}

\begin{proposition}\label{prop:J3}
We have $J_3^{\mathfrak{S}_3} = I_{\Delta_3}^{\mathfrak{S}_3}$.
\end{proposition}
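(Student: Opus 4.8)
The inclusion $J_3^{\mathfrak S_3}\subseteq I_{\Delta_3}^{\mathfrak S_3}$ is immediate, so the content is the reverse inclusion, and I would prove it in three stages. \emph{First}, I would observe that a symmetric polynomial vanishing on a pairwise diagonal $\Delta_{ij}$ automatically vanishes there to order at least two: the transposition $(i\,j)$ fixes $\Delta_{ij}$ pointwise and acts by $-1$ on its conormal directions $x_{\ell i}-x_{\ell j}$, so if an $(i\,j)$-invariant polynomial vanished along $\Delta_{ij}$ to order exactly $m$, its leading term would be a nonzero section of $\operatorname{Sym}^m$ of the conormal bundle, on which $(i\,j)$ acts by $(-1)^m$, forcing $m$ even. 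Hence $I_{\Delta_3}^{\mathfrak S_3}\subseteq\bigl(I_{\Delta_{12}}^2\cap I_{\Delta_{13}}^2\cap I_{\Delta_{23}}^2\bigr)^{\mathfrak S_3}$, and it suffices to show this last space lies in $J_3$. This matches the alternating side: each alternating polynomial vanishes on every $\Delta_{ij}$ (to order $\geq 1$ by the same conormal bookkeeping), so a product of two of them vanishes to order $\geq 2$ on each $\Delta_{ij}$, and by Proposition~\ref{prop:mappoly} with $\ell=2$ such products span $J_3^{\mathfrak S_3}$.

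\emph{Second}, I would localize after splitting off the barycenter. Writing $\kk[x_{ij}]=\kk[x_1,\dots,x_n]\otimes T$ for the decomposition $\kk^3=\mathbf 1\oplus V$ with $V$ the standard representation of $\mathfrak S_3$, the group acts only on $T:=\operatorname{Sym}((\kk^n)^{*}\otimes V)$, through the realization $\mathfrak S_3\hookrightarrow GL(V)=GL_2$ as the dihedral group of the triangle; moreover $J_3=\kk[x_1,\dots,x_n]\otimes J'$ and $I_{\Delta_{ij}}=\kk[x_1,\dots,x_n]\otimes I_{W_{ij}}$, where $W_{12},W_{13},W_{23}$ are three pairwise complementary $n$-dimensional subspaces of $(\kk^n)^{*}\otimes V$. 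Using the most elementary alternating polynomials — the $2\times 2$ minors $D_{ab}$ of \eqref{eq:matrixM}, which vanish to order exactly one on each $\Delta_{ij}$, and the coordinatewise Vandermondes $V_a=\prod_{i<j}(x_{ai}-x_{aj})$ — one checks that $J'$ and $I_{W_{12}}\cap I_{W_{13}}\cap I_{W_{23}}$ have the same stalk at every prime not contained in the deepest diagonal $W_{12}\cap W_{13}\cap W_{23}=\{0\}$, and that $I_{W_{12}}^2\cap I_{W_{13}}^2\cap I_{W_{23}}^2$ localizes into $J'$ at those primes. So the whole question is concentrated at the origin, and — since $J'$ and its saturation at the irrelevant ideal of $T$ agree in all large degrees — it reduces to a bounded list of degrees.

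\emph{Third}, I would finish the bounded verification representation-theoretically. By the Cauchy formula $T=\bigoplus_{a\geq b\geq 0}\mathbb S_{(a,b)}((\kk^n)^{*})\otimes\mathbb S_{(a,b)}(\kk^2)$, so every $GL_n$-stable subspace of $T$ — in particular $J'$, the intersection $I_{W_{12}}\cap I_{W_{13}}\cap I_{W_{23}}$, and the square-intersection — is cut out by $\mathfrak S_3$-submodules of the finite multiplicity spaces $\mathbb S_{(a,b)}(\kk^2)$; and since $\mathfrak S_3$ is a rank-two reflection group with invariant degrees $2,3$, the graded pieces of its invariants and $\varepsilon_3$-isotypic part are explicit (Hilbert series $\tfrac{1}{(1-t^2)(1-t^3)}$ and $\tfrac{t^3}{(1-t^2)(1-t^3)}$). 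Comparing these multiplicity spaces in the relevant low degrees is then a finite computation, consistent with the authors' remark that $J_3=I_{\Delta_3}$ can be checked on a computer when $n=d=3$.

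The hard part is the transition from Stage two to the global statement: away from the small diagonal $J_3$ and $I_{\Delta_3}$ already coincide, but controlling $J_3^{\mathfrak S_3}$ along the small diagonal is precisely where the full ideal equality $J_3=I_{\Delta_3}$ remains open for $n\geq 4$. What makes the invariant version go through is the Stage-one input: symmetry forces vanishing of order $\geq 2$ on each $\Delta_{ij}$, which is exactly the order of vanishing of the products of alternating polynomials generating $J_3^{\mathfrak S_3}$, so any putative generator of $I_{\Delta_3}$ not in $J_3$ — which can only vanish to order one along some $\Delta_{ij}$ — fails to be symmetric.
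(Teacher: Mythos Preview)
Your Stage~1 observation is correct --- any $\mathfrak{S}_3$-invariant element of $I(\Delta_3)$ lies in $\bigcap_{i<j} I_{\Delta_{ij}}^2$ --- but note this is only a reformulation: since also $\bigcap I_{\Delta_{ij}}^2 \subseteq \bigcap I_{\Delta_{ij}} = I(\Delta_3)$, you have in fact shown $I(\Delta_3)^{\mathfrak{S}_3} = (\bigcap I_{\Delta_{ij}}^2)^{\mathfrak{S}_3}$, which by itself does not bring you closer to $J_3$. The real work is in Stages~2 and~3, and there the argument is incomplete. You do not carry out the finite verification of Stage~3, and you have not established that the degree bound coming from localization in Stage~2 is uniform in $n$; the Cauchy decomposition is suggestive, but the ideal structure of $J'$ inside $T$ involves products of Schur functors on the $(\kk^n)^*$ side, and one would need to argue carefully that the relevant multiplicity subspaces of $\mathbb{S}_{(a,b)}(\kk^2)$ stabilize. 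Most seriously, the assertion in your last paragraph --- that an element of $I(\Delta_3)\setminus J_3$ ``can only vanish to order one along some $\Delta_{ij}$'' --- is equivalent to $\bigcap I_{\Delta_{ij}}^2 \subseteq J_3$, which for $n\leq 3$ follows only because $J_3 = I(\Delta_3)$ is already known there, and for $n\geq 4$ is open. So that final paragraph is circular. (Also, a $2\times 2$ minor of the $n\times 3$ matrix is not $\mathfrak{S}_3$-alternating, only alternating under the transposition of its two chosen columns, so your ``$D_{ab}$'' is not what you want.)

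The paper's proof takes a different and cleaner route. It invokes a result of Scala \cite[Corollary~4.21]{ScalaDiag} asserting $I(\Delta_3)^{\mathfrak{S}_3} = (I_{\Delta_{1,2}}\cdot I_{\Delta_{1,3}}\cdot I_{\Delta_{2,3}})^{\mathfrak{S}_3}$, so it suffices to show the \emph{product} ideal lies in $J_3$. That product is generated by elements $(x_{1a}-x_{2a})(x_{1b}-x_{3b})(x_{2c}-x_{3c})$ involving at most three row-indices $a,b,c$, so by $GL_n$-symmetry one may assume $n=3$; in that case the authors verify the stronger equality $J_3 = I(\Delta_3)$ by computer (with code available). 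Both routes end in a machine computation, but the paper's reduction --- Scala's identity plus the row-index argument --- lands on a single concrete check in fixed dimension and actually performs it.
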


\begin{proof}
	For  $1\leq a<b \leq 3$, let $I_{\Delta_{a,b}} \subseteq \kk[x_{ij}]$ be the ideal of the pairwise diagonal $\Delta_{a,b} \subseteq (\nA^n)^3$. It is proven in \cite[Corollary 4.21]{ScalaDiag} that $I_{\Delta_3}^{\mathfrak{S}_3} = \left( I_{\Delta_{1,2}}\cdot I_{\Delta_{1,3}} \cdot I_{\Delta_{2,3}} \right)^{\mathfrak{S}_3}$. If we can prove that $( I_{\Delta_{1,2}}\cdot I_{\Delta_{1,3}} \cdot I_{\Delta_{2,3}}) \subseteq J_3$, we are done. The product ideal is generated by elements of the form
	\[ (x_{1a}-x_{2a})\cdot (x_{1b}-x_{3b})\cdot (x_{2c}-x_{3c}) \qquad \text{ for } \{a,b,c\} \in \{1,\ldots,n\}. \]
By symmetry, if we can prove that all of these are contained in $J_3$ when $n=3$, then it follows in the general case. Finally, in the case $n=3$ we checked that $J_3=I(\Delta_3)$ with both computer algebra programs \texttt{Macaulay2} \cite{M2} and \texttt{OSCAR} \cite{OSCAR}. 
The code can be found at the arXiv version of this paper and as an online tutorial \cite{OscarTutorial} in the case of \texttt{OSCAR}\footnote{We recommend the interested reader to refer to the online tutorial \cite{OscarTutorial}, since this will be kept more up to date.}.
\end{proof}

\section{The yoga of tautological bundles on Hilbert schemes of points}\label{sec:yoga} 

\noindent In this section, we first review Hilbert schemes of points on smooth varieties and tautological bundles following \cite{Ago22, Ago24, CKP23+, CLPS25+, ENP, NP24+}, and then, we show some technical results, which will be crucial for the proofs of the main theorems of the paper. In particular, we prove Proposition \ref{prop:HC=blow-up} from the introduction and some essential cohomology vanishing results: Proposition \ref{prop:cohvanishing} and Proposition {prop:cohvanishingcurves}.

\subsection{Hilbert schemes of points on smooth varieties} 
Let $X$ be a smooth projective variety of dimension $n$. For any $d\geq 1$, we denote by $X^{(d)} = X^d/\mathfrak{S}_d$ the $d$-th \emph{symmetric product} of $X$ with the quotient map 
\[ q_d\colon X^d \longrightarrow X^{(d)}. \]
Recall that we denote instead by $X^{[d]}$ the Hilbert scheme of $d$ points on $X$. It is well known that the Hilbert scheme $X^{[d]}$ is smooth if and only if $n\leq 2$ or $d\leq 3$. In this section, we always assume that this is the case unless otherwise specified.
The \emph{Hilbert--Chow morphism}
\[ h_d \colon X^{[d]} \longrightarrow X^{(d)} \]
sends a finite scheme $\xi\in X^{[d]}$ to the points on its support counted with multiplicity. The Hilbert--Chow morphism is an isomorphism if $d=1$ or if $X=C$ is a curve, but in all other cases it has positive-dimensional fibers. The locus of non-reduced schemes
\[ E_d = \{\xi \in X^{[d]} \,|\, \xi \text{ is non-reduced }\}  \subseteq X^{[d]}\]
is a divisor in $X^{[d]}$ and it is precisely the exceptional divisor of the Hilbert--Chow morphism.  

\medskip

When $1\leq d_1\leq d_2$, we also consider the \emph{nested Hilbert scheme}
\[ X^{[d_1,d_2]} = \{ (\xi_1,\xi_2) \in X^{[d_1]}\times X^{[d_2]} \,|\, \xi_1\subseteq \xi_2 \} \subseteq X^{[d_1]} \times X^{[d_2]} \]
with the two induced maps
\[ \tau\colon X^{[d_1,d_2]} \longrightarrow X^{[d_1]}~~\text{ and }~~ \rho\colon X^{[d_1,d_2]}\longrightarrow X^{[d_2]}. \]
In particular, the nested Hilbert scheme $X^{[1,d]}$ is the universal family over $X^{[d]}$, meaning that the fiber of the map $\rho\colon X^{[1,d]} \to X^{[d]}$ over $\xi\in X^{[d]}$ is $\rho^{-1}(\xi) = \xi \times \{\xi\} \subseteq X\times \{\xi\}$. Thus $\rho\colon X^{[1,d]} \to X^{[d]}$ is finite and flat of degree $d$. Sometimes we will also use the notation $\mathcal{Z}_d = X^{[1,d]}$ for the universal family. If instead $2\leq d$, and if $(\xi_1,\xi_2)\in X^{[d-1,d]}$, then the scheme-theoretic difference $\xi_2\setminus \xi_1$ consists of a single point, so that we have a well-defined residual morphism
\[ \operatorname{res}\colon X^{[d-1,d]} \longrightarrow X; \quad (\xi_1,\xi_2) \longmapsto \xi_2\setminus \xi_1.  \]
It is well known that the nested Hilbert scheme is smooth precisely in the following cases:
	\[ n=1, \quad n=2, d_2=d_1+1, \quad d_1=1,d_2=2, \quad d_1=2,d_2=3.\]
Since these are the cases that we will use in the following, we always assume that we are in this situation.
The combination 
\[ (\operatorname{res}, \tau )\colon X^{[d-1,d]} \longrightarrow X \times X^{[d-1]} \]
of two maps $\res$ and $\tau$  realizes the nested Hilbert scheme $X^{[d-1,d]}$ as the blow-up along the universal family $\mathcal{Z}_m \subseteq X \times X^{[d-1]}$. We denote by $F_{d-1}\subseteq X^{[d-1,d]}$ the corresponding exceptional divisor. We have
\[ F_{d-1} = \{ (\xi_1,\xi_2) \in X^{[d-1,d]} \,|\, \operatorname{Supp}(\xi_1) = \operatorname{Supp}(\xi_2) \}. \] 
For more details on Hilbert schemes of points and nested Hilbert schemes, we refer to \cite[Section 1]{CLPS25+}.

\subsection{Tautological bundles on Hilbert schemes of points} 
Let now $L$ be any line bundle on $X$. Consider the universal family $\mathcal{Z}_d\subseteq X \times X^{[d]}$ with the map $\rho \colon \mathcal{Z}_d \to X^{[d]}$. The sheaf
\[ E_{d,L} := \rho_*((\sO_{X^{[d]}} \boxtimes L  ) \otimes \sO_{\mathcal{Z}_d}) \]
is a vector bundle of rank $d$ on $X^{[d]}$, called the \emph{tautological bundle}. Its fiber over $\xi\in X^{[d]}$ is naturally identified with $H^0(X,L\otimes \sO_{\xi})$. Taking the determinant, we obtain a line bundle
\[ N_{d,L} := \wedge^d E_{d,M}. \]
In particular, one defines a divisor $\delta_d$ on $X^{[d]}$ such that $\sO_{X^{[d]}}(-\delta_d) := N_{d,\sO_X}$. It holds that $\sO_{X^{[d]}}(2\delta_d) \cong \sO_{X^{[d]}}(E_d)$. Starting with the line bundle $L$, we can take a line bundle $L^{\boxtimes d}$ on $X^d$. This can be made into a $\mathfrak{S}_d$-bundle in two different ways: one with the natural action and another one with the natural action twisted by the alternating character $\varepsilon_d$ of $\mathfrak{S}_d$. We denote the two $\mathfrak{S}_d$-bundles as
\[ L^{\boxtimes d}~~\text{ and }~~ L^{\boxtimes d}\otimes \varepsilon_d, \] 
and we obtain two sheaves on the symmetric product $X^{(d)}$ as
\[ S_{d,L} := q_{d,*}^{\mathfrak{S}_d}(L^{\boxtimes d})~~\text{ and }~~ \mathcal{N}_{d,L} := q_{d,*}^{\mathfrak{S}_d}(L^{\boxtimes d} \otimes \varepsilon_d). \]
The first one is a line bundle, but the second one is torsion-free of rank one (we use curly letter for sheaves that are not bundles). By pulling back $S_{d,L}$ via the Hilbert--Chow morphism $h_m \colon X^{[d]} \to X^{(d)}$, we obtain a line bundle
\[ T_{d,L} := h^*S_{d,L}\]
on $X^{[d]}$. 
We also define 
\[ A_{d,L} := T_{d,L}(-2\delta_m) \cong T_{d,L}(-E_d). \]
If $L_1,L_2$ are two line bundles on $X$, it holds that
\begin{align*} 
S_{d,L_1\otimes L_2} &\cong S_{d,L_1}\otimes S_{d,L_2}, &
\mathcal{N}_{d,L_1\otimes L_2} &\cong \mathcal{N}_{d,L_1}\otimes S_{d,L_2},\\
T_{d,L_1\otimes L_2} &\cong T_{d,L_1}\otimes T_{d,L_2}, &
N_{d,L_1\otimes L_2}&\cong N_{d,L_1}\otimes T_{d,L_2}.
\end{align*}
and in particular, for any line bundle $L$, it holds that.
\[ \mathcal{N}_{d,L}\cong  S_{d,L} \otimes \mathcal{N}_{d,\sO_X}~~\text{ and }~~N_{d,L}\cong T_{d,L}\otimes N_{d,\sO_X} \cong T_{d,L}(-\delta_d). \]
Furthermore, $q_d^*(S_{d,L})\cong L^{\boxtimes d}$ so that, if $L$ is ample on $X$, then $S_{d,L}$ is ample on $X^{(d)}$. This means that if $L$ is sufficiently ample on $X$, then $S_{d,L}$ is sufficiently ample on $X^{(d)}$.

\medskip

An useful observation is that there are isomorphisms of sheaves on $X^{(d)}:$
\begin{equation}\label{eq:isopushforward} 
	h_*T_{d,L} \cong S_{d,L},\quad h_*N_{d,L} \cong \mathcal{N}_{d,L}, \quad h_*A_{d,L} \cong  S_{d,L}\otimes  \mathcal{I}_{\Delta_d} 
\end{equation}
Indeed, the first two follow from \cite[Discussion below Lemma 2.3]{CLPS25+}, and the last one follows from the projection formula applied to $A_{d,L} = h^*S_{d,L}\otimes \sO_{X^{[d]}}(-E_{d})$ and Proposition \ref{prop:idealXmsmooth}, which will be shown below.
The global sections of these bundles can be explicitly computed. From \cite[Discussion below Lemma 2.3]{CLPS25+}, we see that there are canonical isomorphisms
\begin{align*} 
 H^0(X^{[d]},T_{d,L}) \cong H^0(X^{(d)}&,S_{d,L}) \cong S^dH^0(X,L), \\
  H^0(X^{[d]},N_{d,L}) \cong H^0(X^{(d)}&,\mathcal{N}_{d,L}) \cong \wedge^d H^0(X,L),\\
  H^0(X^{[d]},E_{d,L}) &\cong H^0(X,L).
\end{align*}
Furthermore, if $d\geq 2$ and $L$ is sufficiently ample, then \cite[Proposition 6.5]{CLPS25+} gives an isomorphism
\[ H^0(X^{[d]},A_{d,L})) \cong I(\Sigma_{d-2}(X,L))_d. \]
Note that the tautological bundle $E_{d,L}$ is globally generated if and only if $L$ is $(d-1)$-very ample. In this case, by pushing forward the exact sequence
\[ 0\longrightarrow \mathcal{I}_{\mathcal{Z}_d} \otimes (\sO_{X^{[d]}}\boxtimes L) \longrightarrow \sO_{X^{[d]}}\boxtimes L \longrightarrow   \sO_{\mathcal{Z}_d} \otimes (\sO_{X^{[d]}}\boxtimes L) \longrightarrow 0 \]
to $X^{[d]}$, we obtain an exact sequence
\[ 0 \longrightarrow M_{d,L} \longrightarrow H^0(X,L)\otimes \sO_{X^{[d]}} \longrightarrow E_{d,L} \longrightarrow 0 \]
that defines the \emph{kernel bundle} $M_{d,L}$ on $X^{[d]}$. By Lemma \ref{lem:suffamplelemma} (2), $E_{d,L}$ is globally generated and the kernel bundle $M_{d,L}$ is well defined whenever $L$ is sufficiently ample. 
For more details on basic properties of tautological bundles and kernel bundles, we refer to \cite[Section 2]{CLPS25+}.

\subsection{Hilbert--Chow morphism as a blow-up}	
Consider the \emph{big diagonals} in the Cartesian and symmetric products
\[ \Delta_d := \bigcup_{1 \leq i < j \leq d} \Delta_{i,j} \subseteq X^d~~\text{ and }~~\Delta_{(d)} := q_d(\Delta_d) \subseteq X^{(d).} \] 
We have the equality of sheaves on $X^{(d)}$:
\begin{equation}
	\mathcal{I}_{\Delta_{(d)}} = q_{d,*}^{\mathfrak{S}_d}(\mathcal{I}_{\Delta_d}).
\end{equation}
Indeed, the regular functions on $X^{(d)}$ that vanish on $\Delta_{(d)}$ are precisely the $\mathfrak{S}_d$-invariant regular functions on $X^d$ that vanish on $\Delta_d$. Following the discussion on multisymmetric polynomials in Section \ref{sec:background}, another natural ideal sheaf on $X^d$ is the one $\mathcal{J}_d \subseteq \mathcal{I}_{\Delta_d}  \subseteq \sO_{X^d}$ which is locally generated by alternating regular functions. We then obtain another ideal sheaf on $X^{(d)}$:
\[ \mathcal{J}_{(d)} := q_{d,*}^{\mathfrak{S}_d}(\mathcal{J}_d) \subseteq \mathcal{I}_{\Delta_{(d)}}. \]
By Proposition \ref{prop:mappoly}, this ideal is precisely the image of the multiplication map
\[ \mathcal{N}_{d,\sO_X} \otimes \mathcal{N}_{d,\sO_X} \longrightarrow \mathcal{I}_{\Delta_{(d)}}. \]
A priori $\mathcal{J}_{(d)}$ and $\mathcal{I}_{\Delta_{(d)}}$ might differ, but if the Hilbert scheme $X^{[d]}$ is smooth, then we show that they coincide in the following proposition. The statement is significant when $n=2$, where it was proven by Haiman \cite{Haiman}, or when $d=3$, where it follows from Proposition \ref{prop:J3}.

\begin{proposition}\label{prop:idealXmsmooth}
	Assume that $X^{[d]}$ is smooth, or, equivalently, that $n\leq 2$  or $d\leq 3$. Then the multiplication map
	\[ \mathcal{N}_{d,\sO_X} \otimes \mathcal{N}_{d,\sO_X} \longrightarrow \mathcal{I}_{\Delta_{(d)}} \]
	 of sheaves on $X^{(d)}$ is surjective, and there is an equality
	\[ \mathcal{J}_{(d)} = h_*\sO_{X^{[d]}}(-E_d) =   \mathcal{I}_{\Delta_{(d)}} \]
	of sheaves on $X^{(d)}$.
	In particular, the Hilbert--Chow morphism $h_d \colon X^{[d]} \to X^{(d)}$ realizes $X^{[d]}$ as the blow-up of $X^{(d)}$ along the big diagonal $\Delta_{(d)}$ with its reduced structure.
\end{proposition}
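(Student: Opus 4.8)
The plan is to reduce the whole statement to an equality of $\mathfrak{S}_d$-invariant ideals in the affine model $X=\nA^n$, where it is supplied by Haiman's theorem and Proposition~\ref{prop:J3}, and then to feed this back into the geometry of the Hilbert--Chow morphism. First I would note that $\mathcal{J}_{(d)}=q_{d,*}^{\mathfrak{S}_d}(\mathcal{J}_d)$ and $\mathcal{I}_{\Delta_{(d)}}=q_{d,*}^{\mathfrak{S}_d}(\mathcal{I}_{\Delta_d})$, so the equality $\mathcal{J}_{(d)}=\mathcal{I}_{\Delta_{(d)}}$ is local on $X^{(d)}$ and may be checked in the (étale or formal) neighborhood of any point. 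Near a point of $X^{(d)}$ corresponding to a multiset $\sum a_ip_i$ with the $p_i\in X$ pairwise distinct, the symmetric product, the big diagonal, the map $q_d$, and hence both ideal sheaves, decompose as products of the analogous objects for the $(\nA^n)^{(a_i)}$ at their punctual points; so it suffices to treat $X=\nA^n$ at the punctual point of $(\nA^n)^{(d)}$. There the claim is exactly $(J_d)^{\mathfrak{S}_d}=(I(\Delta_d))^{\mathfrak{S}_d}$ inside $\kk[x_{ij}]^{\mathfrak{S}_d}$: for $n\le 2$ it follows from the stronger equality $J_d=I(\Delta_d)$ of Haiman (Remark~\ref{rmk:equalityideals}), and for $d\le 3$ it is Proposition~\ref{prop:J3} together with the elementary cases $d=1$ and $d=2$ --- which are precisely the cases in which $X^{[d]}$ is smooth. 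Since the image of $\mathcal{N}_{d,\sO_X}\otimes\mathcal{N}_{d,\sO_X}\to\mathcal{I}_{\Delta_{(d)}}$ equals $\mathcal{J}_{(d)}$ by Proposition~\ref{prop:mappoly}, this already yields the asserted surjectivity of the multiplication map.

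\textbf{The chain of equalities.} Next I would locate $h_*\sO_{X^{[d]}}(-E_d)$ between the other two sheaves. Since $h$ is birational and $X^{[d]},X^{(d)}$ are normal, $h_*\sO_{X^{[d]}}=\sO_{X^{(d)}}$, so $h_*\sO_{X^{[d]}}(-E_d)$ is an ideal sheaf; because $h(E_d)=\Delta_{(d)}$ set-theoretically, any local section of it vanishes at every point of $\Delta_{(d)}$, hence (as $\Delta_{(d)}$ is reduced) lies in $\mathcal{I}_{\Delta_{(d)}}$. For the reverse containment of $\mathcal{J}_{(d)}$, recall $\sO_{X^{[d]}}(-E_d)=N_{d,\sO_X}^{\otimes2}$ and $h_*N_{d,\sO_X}=\mathcal{N}_{d,\sO_X}$ by \eqref{eq:isopushforward}, and that under this identification the multiplication map of Proposition~\ref{prop:mappoly} is the pushforward of the inclusion $N_{d,\sO_X}\otimes N_{d,\sO_X}=\sO_{X^{[d]}}(-E_d)\hookrightarrow\sO_{X^{[d]}}$; hence its image $\mathcal{J}_{(d)}$ lies in $h_*\sO_{X^{[d]}}(-E_d)$. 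Combined with the first step this forces $\mathcal{J}_{(d)}=h_*\sO_{X^{[d]}}(-E_d)=\mathcal{I}_{\Delta_{(d)}}$.

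\textbf{The blow-up description.} To identify $h$ with the blow-up I would first promote the counit $h^*\mathcal{N}_{d,\sO_X}\to N_{d,\sO_X}$ to a surjection. Choosing $L$ sufficiently ample on $X$, the tautological bundle $E_{d,L}$ is globally generated (Lemma~\ref{lem:suffamplelemma}), hence so is $N_{d,L}=\det E_{d,L}=N_{d,\sO_X}\otimes h^*S_{d,L}$; since $h_*N_{d,L}=\mathcal{N}_{d,\sO_X}\otimes S_{d,L}$, global generation of $N_{d,L}$ forces the counit $h^*h_*N_{d,L}\to N_{d,L}$, and hence after twisting by $h^*S_{d,L}^{\vee}$ the counit $h^*\mathcal{N}_{d,\sO_X}\to N_{d,\sO_X}$, to be surjective. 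Pulling back the multiplication map then shows $\mathcal{I}_{\Delta_{(d)}}\cdot\sO_{X^{[d]}}=\mathcal{J}_{(d)}\cdot\sO_{X^{[d]}}$ is the square of the image of this counit, namely all of $N_{d,\sO_X}^{\otimes2}=\sO_{X^{[d]}}(-E_d)$, which is invertible; so by the universal property of blowing up, $h$ factors as $X^{[d]}\xrightarrow{g}\mathrm{Bl}_{\Delta_{(d)}}X^{(d)}\to X^{(d)}$ with $g^*$ of the exceptional ideal equal to $\sO_{X^{[d]}}(-E_d)$. Comparing this $h$-ample line bundle with the relatively anti-ample exceptional divisor of the blow-up shows that $g$ is quasi-finite, hence finite; being also birational, $g$ is an isomorphism as soon as $\mathrm{Bl}_{\Delta_{(d)}}X^{(d)}$ is normal --- which is clear for $n=1$ (there $h$ is already an isomorphism), is Haiman's theorem for $n=2$, and for $d\le 3$ can be obtained from a local computation at punctual points as in the first step, using the known structure of the Hilbert scheme of three points.

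\textbf{Main obstacle.} I expect the last point to be where the real work lies: converting the ideal-sheaf equalities of the first two steps into the geometric statement requires controlling the a priori possibly non-normal scheme $\mathrm{Bl}_{\Delta_{(d)}}X^{(d)}$ --- equivalently, ruling out that $X^{[d]}$ is merely its normalization --- and this is exactly where genuine input on the geometry of Hilbert schemes of points (Haiman's theorem in the surface case, and the explicit analysis of $X^{[3]}$ when $d=3$) seems unavoidable. Everything else is a fairly mechanical combination of the preparatory results in Sections~\ref{sec:background}--\ref{sec:yoga}.
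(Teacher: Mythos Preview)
Your treatment of the first two parts --- the reduction to the affine equality $(J_d)^{\mathfrak{S}_d}=(I(\Delta_d))^{\mathfrak{S}_d}$ via Remark~\ref{rmk:equalityideals} and Proposition~\ref{prop:J3}, and the sandwiching $\mathcal{J}_{(d)}\subseteq h_*\sO_{X^{[d]}}(-E_d)\subseteq \mathcal{I}_{\Delta_{(d)}}$ --- is correct and matches what the paper does. The real divergence is in how the blow-up description is obtained, and here your argument has a gap that you yourself flag but do not actually close.

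The paper does not try to deduce the blow-up statement from the universal property. Instead it quotes a theorem of Ekedahl--Skjelnes: the Hilbert--Chow morphism (restricted to the smoothable component, which here is all of $X^{[d]}$) is already known to be the blow-up of $X^{(d)}$ along the ideal sheaf $\mathcal{J}_{(d)}$, with $E_d$ as the exceptional divisor. This single citation gives both the containment $\mathcal{J}_{(d)}\subseteq h_*\sO_{X^{[d]}}(-E_d)$ and the blow-up description at once; the equality $\mathcal{J}_{(d)}=\mathcal{I}_{\Delta_{(d)}}$ then finishes everything. In particular, the question of whether $\mathrm{Bl}_{\Delta_{(d)}}X^{(d)}$ is normal never arises.

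By contrast, your route genuinely needs that normality, and your proposed justification is insufficient. For $n=2$ you invoke ``Haiman's theorem'', but the part of Haiman's work that identifies the Hilbert scheme with this blow-up is considerably deeper than the equality $J_d=I(\Delta_d)$ you used earlier, and you would have to cite it separately. More seriously, for $d=3$ you appeal to ``the known structure of the Hilbert scheme of three points'', but identifying $X^{[3]}$ with $\mathrm{Bl}_{\Delta_{(3)}}X^{(3)}$ is exactly the content of the proposition --- the paper presents it as a new result --- so this is circular. Without an independent argument for the normality of $\mathrm{Bl}_{\Delta_{(3)}}X^{(3)}$ (or some other way to upgrade your finite birational $g$ to an isomorphism), the $d=3$ case of your blow-up step is not proved. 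The missing ingredient is precisely the Ekedahl--Skjelnes theorem.
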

\begin{proof}
	 We use a result of Ekedahl and Skjelnes \cite{ESGood}. A combination of the remark at the beginning of 
\cite[Section 3.3]{ESGood} and  \cite[Proposition 3.4 and Theorem 7.7]{ESGood} shows that the Hilbert--Chow morphism $h_d \colon X^{[d]} \to X^{(d)}$ is the blow-up along the ideal sheaf that is the image of the map $\mathcal{N}_{d,\sO_X} \otimes \mathcal{N}_{d,\sO_X} \rightarrow \mathcal{I}_{\Delta_{(d)}}$.  Recall that this ideal sheaf is precisely $\mathcal{J}_{(d)}$ by Proposition \ref{prop:mappoly}. Hence we have $\mathcal{J}_{(d)} \subseteq h_*\sO_{X^{[d]}}(-E_d) \subseteq \mathcal{I}_{\Delta_{(d)}}$. At this point, Remark \ref{rmk:equalityideals} for $n\leq 2$ or $d\leq 2$ and  Proposition \ref{prop:J3} for $d=3$ imply that $\mathcal{J}_{(d)} = \mathcal{I}_{\Delta_{(d)}}$.
\end{proof}

\begin{remark} 
	Let for a moment $n,d$ be general. The the \emph{smoothable component} of $X^{[d]}$ is the schematic closure $X^{[d]}_{\operatorname{sm}} = \overline{(X^{[d]}\setminus E_d)}$. Using the results of \cite{ESGood} and Proposition \ref{prop:mappoly} as in the above proof, one sees that the restriction of the Hilbert--Chow morphism
	\[ h_d \colon X^{[d]}_{\operatorname{sm}} \longrightarrow X^{(d)}\]
	is  the blow-up along the ideal sheaf $\mathcal{J}_{(d)}$ and the intersection $E_{d,\operatorname{sm}}:=E_d \cap X^{[d]}_{\operatorname{sm}}$ is the corresponding exceptional divisor. Hence, there are always containments
	\[ \mathcal{J}_{(d)} \subseteq h_*\sO_{X^{[d]}_{\operatorname{sm}}}(-E_{d,\operatorname{sm}}) \subseteq \mathcal{I}_{\Delta_{(d)}}.  \]
	More precisely, $h_*\sO_{X^{[d]}_{\operatorname{sm}}}(-E_{d,\operatorname{sm}})$ is the integral closure of $\mathcal{J}_{(d)}$ inside $\sO_{X^{(d)}}$ (see \cite[Definition 9.6.2]{LazPosI}).
\end{remark}

The proof of Proposition \ref{prop:HC=blow-up} just collects together facts that we proved up to now.

\begin{proof}[Proof of Proposition  \ref{prop:HC=blow-up}]
	The first statement follows from the discussion in Remark \ref{rmk:equalityideals} and Proposition \ref{prop:J3}. The second statement is in Proposition \ref{prop:idealXmsmooth}.
\end{proof}
	
\subsection{Some cohomology computation}
When $d \geq 2$, the nested Hilbert scheme $X^{[d-1,d]}$ with the maps $\tau\colon X^{[d-1,d]}\to X^{[d]},\rho\colon X^{[d-1,d]}\to X^{[d-1]}$, and $\res\colon X^{[d-1,d]}\to X$ is useful to reduce questions on $X^{[d]}$ to questions on $X^{[d-1]}\times X$. A first important fact is that for any line bundle $L$ on $X$, there is a short exact sequence
\begin{equation}\label{eq:seqpullbackE}
	0 \longrightarrow \res^*L(-F_{d-1}) \longrightarrow \rho^*E_{d,L} \longrightarrow \tau^*E_{d-1,L} \longrightarrow 0, 
\end{equation}
and in particular we see that
\begin{equation}\label{eq:pullbackNT}
	\rho^*N_{d,L} \cong ( \res^*L \otimes \tau^{*}N_{d-1,L})(-F_{d-1})~~\text{ and }~~ \rho^*T_{d,L} \cong \res^*L  \otimes \tau^*T_{d-1,L}.
\end{equation} 
If $L$ is $(d-1)$-very ample, then there is also another exact sequence
\begin{equation}\label{eq:seqpullbackM}
	0 \longrightarrow \rho^*M_{d,L} \longrightarrow \tau^*M_{d,L} \longrightarrow \res^*L(-F_{d-1}) \longrightarrow 0
\end{equation}
Finally, since we are assuming that $X^{[d-1,d]}$ is smooth, \cite[Lemma 2.7]{CLPS25+} says that 
\begin{equation}\label{eq:directsummand}
	\sO_{X^{[d]}} \text{ is a direct summand of } \rho_*\sO_{X^{[d-1,d]}}(F_{d-1}).
\end{equation}
Now, we can prove one of main technical results that is going to be essential for later.

\begin{proposition}\label{prop:cohvanishing}
	Let $X$ be a smooth projective variety of dimension $n$, and fix an integer $d$ such that $n\leq 2$ or $d\leq 2$. If $B$ is an arbitrary line bundle on $X$ and $A$ is a sufficiently ample line bundle on $X$, then 
	\[ H^i(X^{[d]},S^iE_{d,B}^{\vee} \otimes T_{d,A}) =H^i(X^{[d]},S^iE_{d,B}^{\vee} \otimes N_{d,A}) = 0 \quad \text{ for all } i>0. \]
	Equivalently, it holds that
	\[R^ih_*(S^iE_{d,B}^{\vee})=R^ih_*(S^iE_{d,B}^{\vee}\otimes \sO_{X^{[d]}}(-\delta_{d})) = 0 \quad \text{ for all } i>0, \]
	where $h = h_d \colon X^{[d]} \to X^{(d)}$ is the Hilbert--Chow morphism.
\end{proposition}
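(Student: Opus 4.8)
The plan is to reduce the required vanishings to a local statement on the symmetric product $X^{(d)}$ and then to a known vanishing of higher direct images under the Hilbert--Chow morphism. By Lemma \ref{lem:suffamplelemma} (6), since $N_{d,A}\cong N_{d,\sO_X}\otimes h^*S_{d,A}$ and $T_{d,A}\cong h^*S_{d,A}$, and since $S_{d,A}$ is sufficiently ample on $X^{(d)}$ when $A$ is sufficiently ample on $X$, the vanishing of $H^i(X^{[d]},S^iE_{d,B}^\vee\otimes T_{d,A})$ for $A$ sufficiently ample is equivalent to $R^ih_*(S^iE_{d,B}^\vee)=0$, and likewise $H^i(X^{[d]},S^iE_{d,B}^\vee\otimes N_{d,A})=0$ for $A$ sufficiently ample is equivalent to $R^ih_*(S^iE_{d,B}^\vee\otimes\sO_{X^{[d]}}(-\delta_d))=0$. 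So the two displayed formulations are indeed equivalent, and it suffices to prove the $R^ih_*$ statements.

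For $n=1$ the Hilbert--Chow morphism is an isomorphism, so all higher direct images vanish trivially, and for $n=2$ this is exactly \cite[Proposition 4.9]{CLPS25+}, so the only genuinely new case is $d\le 2$ with $n$ arbitrary. The case $d=1$ is again trivial since $h_1$ is an isomorphism. For $d=2$, recall that $h_2\colon X^{[2]}\to X^{(2)}$ is the blow-up of the symmetric square along the (reduced) diagonal $\Delta_{(2)}$, with exceptional divisor $E_2$; equivalently, $X^{[2]}=\mathrm{Bl}_{\Delta}(X\times X)/\mathfrak S_2$ where $\Delta\subseteq X\times X$ is the diagonal, of codimension $n$, and the exceptional divisor is a $\nP^{n-1}$-bundle over $\Delta\cong X$. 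The strategy here is to use the nested Hilbert scheme $X^{[1,2]}$: by \eqref{eq:directsummand}, $\sO_{X^{[2]}}$ is a direct summand of $\rho_*\sO_{X^{[1,2]}}(F_1)$, where $\rho\colon X^{[1,2]}\to X^{[2]}$ is finite and flat of degree $2$, so it is enough to prove the analogous vanishings for the pullbacks of $S^iE_{2,B}^\vee$ (and its twist by $-\delta_2$) to $X^{[1,2]}$, twisted by $F_1$. Now $(\mathrm{res},\tau)\colon X^{[1,2]}\to X\times X^{[1]}=X\times X$ is the blow-up along the diagonal $\mathcal Z_1=\Delta\subseteq X\times X$, and via this identification one computes the pullback of $S^iE_{2,B}^\vee$ using the exact sequence \eqref{eq:seqpullbackE}: filtering $S^i$ of the dual of $0\to\res^*B(-F_1)\to\rho^*E_{2,B}\to\tau^*E_{1,B}\to 0$ expresses $\rho^*S^iE_{2,B}^\vee$ as an iterated extension of sheaves of the form $\res^*B^{-j}(jF_1)\otimes\tau^*(\mathrm{Sym}^{i-j}E_{1,B}^\vee)=\res^*B^{-j}(jF_1)\otimes(\text{line bundle pulled back from }X)$, since $E_{1,B}$ is a line bundle. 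Pushing forward along the blow-up $X^{[1,2]}\to X\times X$ and then along one projection to $X$, one reduces to cohomology of explicit sheaves on a projective bundle over the diagonal, which vanish in the relevant degrees by the standard formula for the cohomology of line bundles on a blow-up along a smooth center (Grothendieck's computation of $R^\bullet$ of $\sO(jE)$); the key point is that the exceptional $\nP^{n-1}$-bundle contributes cohomology only in degree $0$ and in the top degree $n-1$, and the bookkeeping shows the contributions in degrees $1\le i$ all cancel against the blow-down.

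The main obstacle I expect is the bookkeeping in the last step: controlling precisely which twists $jF_1$ occur in the associated graded of $\rho^*S^iE_{2,B}^\vee$ and checking that, after pushing forward through the blow-up $X^{[1,2]}\to X\times X$ and through a projection, the only surviving higher cohomology is killed — in particular verifying that the twist by $-\delta_2$ (equivalently by a suitable fractional power of $\sO(E_2)$, which pulls back to a multiple of $F_1$) does not create a nonzero $R^i$. A cleaner alternative, which I would try first, is to bypass the nested Hilbert scheme entirely for $d=2$: present $X^{[2]}$ directly as $\mathrm{Bl}_\Delta(X\times X)/\mathfrak S_2$, note that taking $\mathfrak S_2$-invariants is exact in characteristic zero so it suffices to prove the vanishing upstairs on the blow-up $\widetilde{X\times X}$, and there $S^iE_{2,B}^\vee$ pulls back to something built from $\sO(-F)$-twists of box products of line bundles from the two factors of $X\times X$; then apply the blow-up cohomology formula directly. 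Either way, the essential input is that a blow-up along a smooth center of codimension $n$ has $R^qh_*\sO(-jE)=0$ for $q>0$ as long as $0\le j\le n-1$ (and more generally controlled vanishing for other $j$), which is classical.
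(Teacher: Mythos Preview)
Your overall strategy matches the paper's: reduce to the $R^ih_*$ statement via Lemma~\ref{lem:suffamplelemma}(6), dispose of $n=1$, $d=1$ trivially, cite \cite[Proposition~4.9]{CLPS25+} for $n=2$, and for $d=2$ pass to $X^{[1,2]}=\Bl_{\Delta}(X\times X)$, filter $\rho^*S^iE_{2,B}^\vee$ via the dual of \eqref{eq:seqpullbackE}, and invoke the standard vanishing $R^i(\res,\tau)_*\sO(jF_1)=0$ for $0\le j\le n-1$.

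There is one genuine issue in your first implementation. You use \eqref{eq:directsummand} to make $\sO_{X^{[2]}}$ a summand of $\rho_*\sO_{X^{[1,2]}}(F_1)$, which forces an extra $F_1$ into the picture: the graded pieces of $\rho^*S^iE_{2,B}^\vee\otimes\sO(F_1)$ carry twists $\sO((j{+}1)F_1)$ with $0\le j\le i$. At the boundary $i=n-1$, $j=n-1$ this gives $\sO(nF_1)$, and $R^{n-1}(\res,\tau)_*\sO(nF_1)\neq 0$ (on each exceptional fibre $\nP^{n-1}$ the restriction is $\sO(-n)$, whose top cohomology is one-dimensional). So the bookkeeping you flag as an obstacle is not just bookkeeping: that route actually fails at the edge.

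The paper avoids this neatly by using instead the decomposition $\rho_*\sO_{X^{[1,2]}}\cong \sO_{X^{[2]}}\oplus\sO_{X^{[2]}}(-\delta_2)$ (recall $\rho$ is finite flat of degree $2$). This kills two birds with one stone---both the untwisted sheaf and its $(-\delta_2)$-twist are summands of $\rho_*\rho^*S^iE_{2,B}^\vee$---and introduces \emph{no} extra $F_1$. Then $h\circ\rho=q_2\circ(\res,\tau)$ with $q_2$ finite reduces everything to $R^i(\res,\tau)_*\sO(jF_1)=0$ for $0\le j\le i\le n-1$, which holds. Your ``cleaner alternative'' of working on $\Bl_{\Delta}(X\times X)$ and taking $\mathfrak S_2$-invariants is literally this argument, since $X^{[1,2]}\to X^{[2]}$ is the $\mathfrak S_2$-quotient; so drop the $F_1$-twist version and run that one.
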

\begin{proof}
	We first show that the two statements are equivalent. We know that $N_{d,A}\cong \sO(-\delta_d)\otimes h^*S_{d,A}$, and furthermore, if $A$ is sufficiently ample on $X$, then $S_{d,A}$ is sufficiently ample on $X^{(d)}$. Then the equivalence of the statements follows from Lemma \ref{lem:suffamplelemma} (6). Now, we check the vanishing of the higher direct images. If $n=1$ or $d=1$, then the Hilbert--Chow morphism $h_d \colon X^{[d]} \to X^{(d)}$ is an isomorphism and it has no higher direct images. If $n=2$, then the vanishing was proven in \cite[Proposition 4.9]{CLPS25+}. We can now suppose that $n$ is arbitrary and $d=2$. Since the Hilbert--Chow morphism $h=h_2 \colon X^{[2]} \to X^{(2)}$ has fibers of dimension at most $n-1$, we need to show that
	\[ R^ih_*(S^iE_{d,B}^{\vee})=R^i h_*(S^iE_{2,B}^{\vee} \otimes \sO_{X^{[2]}}(-\delta_{2})) = 0 \qquad \text{ for } 1\leq i \leq n-1 .  \]
Consider the nested Hilbert scheme $X^{[1,2]}$ with the maps $\tau\colon X^{[1,2]} \to X^{[1]}$, $\rho\colon X^{[1,2]} \to X^{[2]}$, and $\res\colon X^{[1,2]} \to X$. Notice that $\rho \sO_{X^{[1,2]}} \cong \sO_{X^{[2]}} \oplus \sO_{X^{[2]}}(-\delta_2)$. Thus the previous vanishings are implied by
$$
		R^ih_*(\rho_*(\rho^* S^i E^{\vee}_{2,B})) = R^i(h \circ \rho)_* (\rho^* S^i E^{\vee}_{2,B})=0
\quad \text{ for } 1\leq i \leq n-1 ,
$$
	where the first equality follows from the fact that $\rho\colon X^{[1,2]} \to X^{[2]}$ is finite. Now we observe that  $(h\circ \rho) = q_2\circ (\res, \tau)$, where $q_2\colon X\times X \to X^{(2)}$ is the usual quotient by $\mathfrak{S}_2$. Since $q_2$ is finite, we see that the above vanishings are equivalent to
	\[ R^i(\res, \tau)_*(\rho^* S^i E^{\vee}_{2,B})=0 \quad \text{ for } 1\leq i \leq n-1.\] 
	 At this point, we see from \eqref{eq:seqpullbackE} that we have a short exact sequence
	\[ 0 \longrightarrow \tau^*E_{1,B}^{\vee} \longrightarrow \rho^*E^{\vee}_{2,B} \longrightarrow \res^*B^{-1}(F_1)  \longrightarrow 0 \]
	and taking the induced filtration on the symmetric product, we finally see that our vanishings are implied by 
	\begin{multline*} 
	R^i(\res,\tau)_*(\res^*B^{-j} \otimes \tau^*S^{i-j}E_{1,B}^{\vee} \otimes \sO_{X^{[1,2]}}( j F_1))\\
		 = (B^{-j} \boxtimes S^{i-j}E_{1,B}^{\vee}) \otimes R^i(\res,\tau)_*\sO_{X^{[1,2]}}( j F_1) = 0 \quad \text{ for } 1\leq i\leq n-1 \text{ and } 0\leq j\leq i.
	\end{multline*}
	Notice here that $E_{1,B}=B$ on $X$. 
	Then the last vanishing follows from the fact that $(\res,\tau)\colon X^{[1,2]} \to X \times X$ is the blow-up along the smooth subvariety $\mathcal{Z}_1 = \Delta_{1,1}$ of codimension  $n$, and $F_1$ is its exceptional divisor,  so that $R^i(\res,\tau)_*\sO_{X^{[1,2]}}(jF_1) = 0$ for all $i>0$ and $0\leq j \leq n-1$.
	\end{proof}

\subsection{The case of curves}
We assume that $X=C$ is a smooth projective curve of genus $g$. In this case, hings are much simpler since the Hilbert--Chow morphism $h_d \colon C^{[d]} \to C^{(d)}$ is an isomorphism. Furthermore, the map
\[ (\res,\tau)\colon C^{[d-1,d]} \longrightarrow C \times C^{[d-1]}\]
is also an isomorphism, and we see that $\rho\circ (\res,\tau)^{-1}\colon C \times C^{[d-1]} \to C^{[d]}$ is simply the addition map $\sigma_{d-1}\colon C \times C^{[d-1]} \to C^{[d]}$. If  $A$ is a $(d-1)$-very ample line bundle on $C$, the exact sequence \eqref{eq:seqpullbackM} on $C^{[d-1,d]}$ becomes an exact sequence
\begin{equation}\label{eq:pullbackMcurve} 
0 \longrightarrow \sigma_{d-1}^*M_{d,B} \longrightarrow \sO_C \boxtimes M_{d-1,B} \longrightarrow (B \boxtimes \sO_{C_{d-1}})(-\mathcal{Z}_{d-1}) \longrightarrow 0,
\end{equation}
where we notice that for curves the universal family $\mathcal{Z}_{d-1}\subseteq C \times C^{[d-1]}$ is a prime divisor isomorphic to $C \times C^{[d-2]}$. 

\medskip

An important feature of the curve case is that we can use the ``lifting technique'' of \cite{CKP23+, NP24+} to compute some cohomology groups on the Hilbert scheme of points on curves.

\begin{proposition}\label{prop:lifting}
	Let $A,B$ be two nonspecial line bundles on $C$ such that $A$ is $(a-1)$-very ample and $B$ is $(b-1)$-very ample. Then there are isomorphisms
	\[ H^i(C^{[a]},\wedge^b M_{a,A}\otimes S_{a,B})\cong H^i(C^{[b]},S^aM_{b,B}\otimes N_{b,A}) \quad \text{ for all } i \geq 0. \]
\end{proposition}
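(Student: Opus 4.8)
The plan is to realise both sides of the claimed isomorphism as cohomology on the product $C\times C$ via Lemma~\ref{lem:M_L}, and then to exploit the symmetry of the resulting expression. More precisely, recall first that for a curve the Hilbert--Chow morphism is an isomorphism, so $N_{a,A}$, $S_{a,B}$ and the kernel bundle $M_{a,A}$ make sense on $C^{[a]}\cong C^{(a)}$, and by the cohomological computations recalled in Section~\ref{sec:yoga} (together with the identifications $H^0(C^{[a]},N_{a,A})\cong \wedge^aH^0(C,A)$, etc.) one can transport the problem to $C^a$ or $C^{(a)}$. Since $A,B$ are nonspecial, all higher cohomology of $A^{\boxtimes a}$, $B^{\boxtimes b}$ and their twists vanishes, so the hypotheses of Lemma~\ref{lem:M_L} and of Lemma~\ref{lem:suffamplelemma}(7) are available, and the spectral sequence arguments used there apply verbatim.

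\textbf{Key steps.} First I would use the ``lifting technique'' of \cite{CKP23+, NP24+}: the addition map $\sigma_{a-1}\colon C\times C^{[a-1]}\to C^{[a]}$ (which, by the discussion preceding the proposition, equals $\rho\circ(\res,\tau)^{-1}$) together with the sequences \eqref{eq:pullbackMcurve} allows one to pull back $M_{a,A}$ to $C\times C^{[a-1]}$ and strip off one factor at a time. Iterating, one expresses $H^i(C^{[a]},\wedge^bM_{a,A}\otimes S_{a,B})$ as a $\mathfrak{S}_a$-invariant part of cohomology on $C^a$ of a sheaf built out of $A$, $B$ and the diagonals $\Delta_{i,j}$. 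Concretely, $\wedge^b M_{a,A}$ should be recognised, after pulling back to $C^a$, as (a summand of) a sheaf of the form $\bigotimes(A^{\boxtimes}\otimes \mathcal{I}_{\Delta})$-type built from alternating sums of diagonals — essentially $\wedge^b$ of the kernel-bundle construction transported by Lemma~\ref{lem:M_L} — while $S_{a,B}$ becomes $B^{\boxtimes a}$. The upshot is an expression of the form $H^i$ on $C^{a+b}$ (or a suitable symmetric product) of a box product of copies of $A$ and $B$ twisted by ideal sheaves of pairwise diagonals $\mathcal{I}_{\Delta_{i,j}}$, in a configuration that is manifestly symmetric under swapping the roles of $A$ (indexed over $a$ slots) and $B$ (indexed over $b$ slots). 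Performing the analogous lifting on the right-hand side $H^i(C^{[b]},S^aM_{b,B}\otimes N_{b,A})$ — here $S^aM_{b,B}$ is the factor built from $B$ over $b$ slots and $N_{b,A}$ contributes the $A$-part with a twist by the determinant/alternating character — lands one on the \emph{same} space $C^{a+b}$ with the \emph{same} sheaf, so the two cohomology groups are identified. Finally I would check that the $(a-1)$- and $(b-1)$-very ampleness of $A$ and $B$ are exactly what is needed to guarantee that $E_{a,A}$ (resp.\ $E_{b,B}$) is globally generated, hence that the kernel bundles $M_{a,A}$, $M_{b,B}$ are defined and that the Koszul-type exact sequences used to pass between $\wedge^b M$, $S^a M$ and the diagonal ideals are exact.

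\textbf{Main obstacle.} The delicate point is the bookkeeping of which diagonals and which characters appear: one must match $\wedge^b M_{a,A}$ twisted by $S_{a,B}$ against $S^a M_{b,B}$ twisted by $N_{b,A}$, and the natural description of the first involves $a$ ``$A$-slots'' wedged in $b$-fold alternating fashion while the second involves $b$ ``$B$-slots'' in $a$-fold symmetric fashion, with the discrepancy absorbed precisely by the $\det$ (equivalently the $\varepsilon$-twist) hidden in $N_{b,A}$ versus $S_{b,A}$. Getting the signs and the symmetrisation/antisymmetrisation to cancel correctly — i.e.\ verifying that $\wedge^b$ on one side and $S^a$ on the other really do produce the same $\mathfrak{S}_{a+b}$-isotypic piece of $H^i(C^{a+b}, A^{\boxtimes a}\boxtimes B^{\boxtimes b}\otimes(\text{ideal of all diagonals between the two groups}))$ — is where the real work lies, and it is essentially the content of the lifting technique of \cite{CKP23+, NP24+} applied symmetrically; I would cite those papers for the mechanics and focus the argument on checking that the hypotheses there are met and that the final sheaves on $C^{a+b}$ genuinely coincide.
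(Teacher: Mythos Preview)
Your proposal is on the right track and cites exactly the same references (\cite{CKP23+,NP24+}) the paper invokes, but the paper's argument is considerably cleaner than what you outline. Rather than lifting all the way to $C^{a+b}$ and tracking $\mathfrak{S}_a\times\mathfrak{S}_b$-isotypic pieces, the paper identifies \emph{both} sides directly with a single cohomology group on the product of symmetric products:
\[
H^i\bigl(C^{[a]}\times C^{[b]},\,(N_{a,A}\boxtimes S_{b,B})(-D_{a,b})\bigr),
\]
where $D_{a,b}=\{(\xi,\xi')\mid \xi\cap\xi'\neq\emptyset\}$ is the incidence divisor. The identification with each side is what \cite[Lemma~3.3]{CKP23+} and \cite[Lemma~2.3]{NP24+} provide. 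Working on $C^{[a]}\times C^{[b]}$ instead of $C^{a+b}$ means the symmetrisation/antisymmetrisation bookkeeping you flag as the main obstacle simply never arises: the alternating character is already absorbed into $N_{a,A}$, and the symmetric one into $S_{b,B}$. Two small slips in your write-up: the opening reference to cohomology on ``$C\times C$ via Lemma~\ref{lem:M_L}'' is off --- that lemma concerns kernel bundles $M_A$ on $X$ itself, not $M_{a,A}$ on $X^{[a]}$, and the ambient space is not $C\times C$; also, the iterative stripping via $\sigma_{a-1}$ you describe is not needed for this proposition (it is used later, in Proposition~\ref{prop:cohvanishingcurves}).
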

\begin{proof}
   Via the arguments in \cite[Lemma 3.3]{CKP23+} and \cite[Lemma 2.3]{NP24+}, one can see that both sides are isomorphic to $H^i(C^{[a]}\times C^{[b]} , (N_{a,A}\boxtimes S_{b,B})(-D_{a,b}))$, where $D_{a,b}$ is the divisor on $C^{[a]}\times C^{[b]}$ given by
   \[ D_{a,b} := \{(\xi,\xi')\in C^{[a]}\times C^{[b]} \,|\, \xi\cap\xi'\ne \emptyset \}. \qedhere\]
\end{proof} 

With this, we can prove the key technical statement that we are going to use for Theorem \ref{thm:EKS}. The statement should be compared to  \cite[Theorem 1.3]{NP24+}.

\begin{proposition}\label{prop:cohvanishingcurves}
	Let $A,B$ be two nonspecial and $(k+1)$-very ample line bundles on $C$ such that
	\[ h^1(C, B\otimes A^{-1}) \leq r(A)-k-2, \]
	where $r(A):=h^0(C,A)-1$. Then, for all $1\leq i \leq k+2$,  it holds that 
	\[ H^i(C^{[k+2]},S^iM_{k+2,B}\otimes N_{k+2,A}) = H^i(C^{[i]},\wedge^{k+2}M_{i,A}\otimes S_{i,B}) = 0.  \]
\end{proposition}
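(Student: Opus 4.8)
The plan is to use Proposition~\ref{prop:lifting} to move between the two cohomology groups, and then to induct on $i$ while peeling off the tautological bundle via the nested Hilbert scheme $C^{[i-1,i]}$. First I would note that by Proposition~\ref{prop:lifting}, applied with $a=i$ and $b=k+2$ (legitimate since $A$ is $(k+1)$-very ample, hence $(i-1)$-very ample, and $B$ is $(k+1)$-very ample, and both are nonspecial), there is an isomorphism
\[
H^i(C^{[k+2]},S^{i}M_{k+2,B}\otimes N_{k+2,A}) \cong H^i(C^{[i]},\wedge^{k+2}M_{i,A}\otimes S_{i,B}),
\]
so the two asserted vanishings are the same statement and it suffices to prove, say, the second one for each $1\le i\le k+2$. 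The degenerate cases $i>h^0(C,A)-1$ force $\wedge^{k+2}M_{i,A}=0$ since $\operatorname{rank} M_{i,A}=h^0(C,A)-i$; the hypothesis $h^1(C,B\otimes A^{-1})\le r(A)-k-2$ together with nonspeciality is exactly what controls this numerology, so in the range that matters the bundle is genuinely present.

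Next I would set up the induction on $i$ using the addition map. Since $C^{[i-1,i]}\cong C\times C^{[i-1]}$ and $\rho$ becomes the addition map $\sigma_{i-1}\colon C\times C^{[i-1]}\to C^{[i]}$, I would pull back $\wedge^{k+2}M_{i,A}\otimes S_{i,B}$ along $\sigma_{i-1}$, use \eqref{eq:pullbackMcurve} to get the short exact sequence
\[
0\longrightarrow \sigma_{i-1}^{*}M_{i,A}\longrightarrow \sO_C\boxtimes M_{i-1,A}\longrightarrow (A\boxtimes\sO_{C^{[i-1]}})(-\mathcal{Z}_{i-1})\longrightarrow 0,
\]
and take $(k+2)$-nd wedge powers via Lemma~\ref{lem:seqbundles}. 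Because the quotient above has rank one, $\wedge^{k+2}$ of the middle term gets a two-step filtration with graded pieces $\wedge^{k+2}(\sO_C\boxtimes M_{i-1,A})\cong \sO_C\boxtimes\wedge^{k+2}M_{i-1,A}$ and $\big(A\boxtimes\wedge^{k+1}M_{i-1,A}\big)(-\mathcal{Z}_{i-1})$. Tensoring everything with $\sigma_{i-1}^{*}S_{i,B}$, which by \eqref{eq:pullbackNT}-type formulas pulls back to $B\boxtimes S_{i-1,B}$ (using $S_{i,B}=h^*S_{i,B}=T_{i,B}$ on a curve and the multiplicativity relations), I would compute the cohomology on $C\times C^{[i-1]}$ by the Künneth formula. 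The $\sigma_{i-1}^{*}$ does not lose cohomology because $\sO_{C^{[i]}}$ is a direct summand of $\sigma_{i-1,*}\sO_{C\times C^{[i-1]}}$ — this is \eqref{eq:directsummand} in the curve case, where $F_{i-1}$ is trivial — so a vanishing upstairs gives the vanishing downstairs.

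The main obstacle will be the bookkeeping in the Künneth/Leray computation: after the filtration one is left with terms of the shape $H^{p}(C, B\otimes A^{-\epsilon}(\cdots))\otimes H^{q}(C^{[i-1]},\wedge^{k+2-\epsilon}M_{i-1,A}\otimes S_{i-1,B})$ with $p+q=i$, $\epsilon\in\{0,1\}$, plus a correction term coming from the divisor $\mathcal{Z}_{i-1}\cong C\times C^{[i-2]}$. The $q=i$ piece with $\epsilon=0$ is $H^0(C,\sO_C)\otimes H^{i}(C^{[i-1]},\wedge^{k+2}M_{i-1,A}\otimes S_{i-1,B})$, but $H^i$ on the $(i-1)$-dimensional-in-relevant-directions $C^{[i-1]}$ vanishes for $i$ large by dimension, and otherwise is killed by the inductive hypothesis (note the hypothesis on $h^1(C,B\otimes A^{-1})$ is preserved, indeed improved, when passing from $i$ to $i-1$ since $r(A)$ is unchanged). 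The $q<i$ pieces require $H^{p}(C,-)$ with $p\ge 1$, i.e. $p=1$, so I need $H^1$ of line bundles like $B\otimes A^{-1}$, $B$, $B\otimes A^{-1}(\mathcal{Z}$-restriction$)$ to either vanish or to have dimension small enough to be absorbed; this is precisely where the numerical hypothesis $h^1(C,B\otimes A^{-1})\le r(A)-k-2$ enters, bounding the rank of the relevant wedge power against the defect $h^1$, exactly as in the proof of \cite[Theorem~1.3]{NP24+}. I would organize this as a double induction (on $i$, and within fixed $i$ on the wedge-power index via the filtration), checking at each stage that the two types of "bad" terms — the top $H^i$ term and the $H^1$-of-a-special-bundle terms — are controlled by the inductive hypothesis and the degree assumption respectively. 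The base case $i=1$ is $H^1(C,M_{1,B}\otimes$ (line bundle)$)=H^1(C,B\otimes A^{-1}\otimes\cdots)$ type and follows directly from nonspeciality and the numerical bound.
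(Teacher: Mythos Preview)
Your overall architecture (apply Proposition~\ref{prop:lifting}, then induct on $i$ by pulling back along $\sigma_{i-1}$ and using \eqref{eq:pullbackMcurve}) matches the paper, but the execution diverges at two points and leaves a genuine gap.

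First, the claim that ``$F_{i-1}$ is trivial'' is false: for curves $(\res,\tau)$ is an isomorphism and $F_{i-1}$ corresponds to the universal divisor $\mathcal{Z}_{i-1}\subseteq C\times C^{[i-1]}$. The paper uses \eqref{eq:directsummand} with the twist, i.e.\ that $\sO_{C^{[i]}}$ is a direct summand of $\sigma_{i-1,*}\sO_{C\times C^{[i-1]}}(\mathcal{Z}_{i-1})$, and this twist is not cosmetic: it is what produces the factor $(2\mathcal{Z}_{i-1})$ that later becomes $B(2\xi_{i-1})$ for a general effective divisor $\xi_{i-1}$.

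Second, and more seriously, you take the standard two-step filtration on $\wedge^{k+2}$ of the middle term, which exhibits $\sigma_{i-1}^*\wedge^{k+2}M_{i,A}$ as a \emph{subbundle}. This forces you to control $H^{i-1}$ of the quotient $(L\boxtimes \wedge^{k+1}M_{i-1,A}\otimes S_{i-1,B})(-\mathcal{Z}_{i-1})$, which is a different vanishing (lower wedge power, negative twist) not covered by any inductive hypothesis you have set up, and your discussion of the ``correction term coming from $\mathcal{Z}_{i-1}$'' does not address it. The paper instead applies the dual sequence \eqref{eq:dualses} of Lemma~\ref{lem:seqbundles} to obtain a \emph{surjection}
\[
\big((B\otimes A^{-1})\boxtimes(\wedge^{k+3}M_{i-1,A}\otimes S_{i-1,B})\big)(2\mathcal{Z}_{i-1})\longtwoheadrightarrow \sigma_{i-1}^*(\wedge^{k+2}M_{i,A}\otimes S_{i,B})(\mathcal{Z}_{i-1}),
\]
so that the wedge power \emph{increases} by one at each step. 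After the Leray spectral sequence for $\pr_C$ and a general-point argument (the sheaf on $C$ need only fail to be supported everywhere), one reduces to $H^{i-1}(C^{[i-1]},\wedge^{k+3}M_{i-1,A}\otimes S_{i-1,B(2p_1)})=0$ for general $p_1$, and iterating lands on
\[
H^1(C,\wedge^{k+1+i}M_A\otimes B(2\xi_{i-1}))=0,\qquad \xi_{i-1}\in C^{[i-1]}\text{ general}.
\]
This is then dualized and killed by Green's vanishing theorem \cite[Theorem~3.a.1]{Green}, and it is precisely here that the hypothesis $h^1(C,B\otimes A^{-1})\le r(A)-k-2$ is consumed. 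Your base case is also misidentified: for $i=1$ one must show $H^1(C,\wedge^{k+2}M_A\otimes B)=0$, which already needs Serre duality plus Green's vanishing, not just nonspeciality. In short, the missing ingredients are the use of \eqref{eq:dualses} (surjection, wedge power going up), the general-point/support trick in the Leray step, and Green's vanishing to close the induction.
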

\begin{proof} 
Since  $A,B$ are both nonspecial and $(k+1)$-very ample, Proposition \ref{prop:lifting} says
\[ H^i(C^{[k+2]},S^iM_{k+2,B}\otimes N_{k+2,A}) = 
	H^i(C^{[i]},\wedge^{k+2}M_{i,A}\otimes S_{i,B}) \qquad \text{ for } 1\leq i \leq k+2.\]
For $i=1$, the problem becomes $H^1(C,\wedge^{k+1+i}M_{1,A}\otimes B) = 0$. If instead $i>1$, then we can use the fact that $\sO_{C^{[i]}}$ is a direct summand of $\sigma_{i-1,*}\sO_{C \times C^{[i-1]}}(\mathcal{Z}_{i-1})$ (see \eqref{eq:directsummand}) to reduce the problem to 
\[ H^i(C \times C^{[i-1]},\sigma_{i-1}^*(\wedge^{k+2}M_{i,A}\otimes S_{i,B}) \otimes \sO_{C^{[i-1]} \times C}(\mathcal{Z}_{i-1}))  = 0 \quad \text{ for } 2\leq i \leq k+2 \]
Using the short exact sequence \eqref{eq:pullbackMcurve}, together with Lemma \ref{lem:seqbundles}, we get a surjection 
\[ ( (B\otimes A^{-1}) \boxtimes (\wedge^{k+3}M_{i-1,A} \otimes S_{i-1,B}))(2\mathcal{Z}_{i-1}) \longrightarrow \sigma_{i-1}^*(\wedge^{k+2}M_{i,A} \otimes S_{i,B})(\mathcal{Z}_{i-1}) \]
of sheaves on $C^{[i-1]}\times C$.
Hence, it is enough to prove that the bundle on the left has no $H^i$. The Leray spectral sequence for the projection $\pr_C \colon C \times C^{[i-1]} \to C$ to the first factor shows that this vanishing is equivalent to
\[H^1(C,(B\otimes A^{-1})\otimes R^{i-1}\operatorname{pr}_{C,*}(\sO_C \boxtimes (\wedge^{k+3}M_{i-1,A} \otimes S_{i-1,B}))(2\mathcal{Z}_{i-1})) = 0,\]
and this is in particular true if the sheaf on $C$ is not supported on the whole curve. Thus it is enough to show that
\[ H^{i-1}(C^{[i-1]},\wedge^{k+3}M_{i-1,A}\otimes S_{i-1,B(2p_1)}) = 0 \]
for a general point $p_1\in C$. Proceeding in this way inductively, we see that our vanishings can be deduced from
\[ H^1(C,\wedge^{k+1+i}M_{A}\otimes B(2\xi_{i-1})) = 0  \quad \text{ for }  1\leq i \leq k+2\]
where $\xi_{i-1}=p_1+\cdots+p_{i-1} \in C^{[i-1]}$ is a general effective divisor of degree $i-1$. Observe now that the rank of $M_A$ is $r(A)$ so that $\wedge^{k+1+i}M_A^{\vee} \cong \wedge^{r(A)-k-1-i}M_A\otimes A$. Then the Serre duality proves that the previous vanishing is equivalent to
\[ H^0(C,\wedge^{r(A)-k-1-i}M_A \otimes \omega_C\otimes A\otimes B^{-1}\otimes (-2\xi_{i-1})) = 0 \quad \text{ for } 1\leq i \leq k+2. \]
By Green's vanishing theorem \cite[Theorem 3.a.1]{Green}, this is in particular implied by 
\[ h^0(C,\omega_C\otimes A\otimes B^{-1}(-2\xi_{i-1})) \leq r(A)-k-1-i \quad \text{ for all } 1\leq i \leq k+. \]
Since $\xi_{i-1}\in C^{[i-1]}$ is general, this follows from $h^1(C,B\otimes A^{-1}) = h^0(C,\omega_C\otimes A\otimes B^{-1}) \leq r(A)-k-2$. 
\end{proof}

\section{Determinantal ideals via commutative algebra}\label{sec:algebra}

\noindent This section is devoted to the proof of  Theorem \ref{thm:SS} from the introduction. Let $X$ be a smooth projective variety of dimension $n$, and $k$ be a non-negative integer. We assume that $n\leq 2$ or that $k\leq 1$. Let $A,B$ two line bundles on $X$, and set $L:=A\otimes B$. It is known from \cite[Theorem B]{CLPS25+} that the ideal $I(\Sigma_k(X,L))$ is generated in degree $k+2$ if $L$ is sufficiently ample. Furthermore, under the this assumption, it is also shown in \cite[Proposition 6.5]{CLPS25+} that 
$$
I(\Sigma_k(X,L))_{k+2} \cong H^0(X^{[k+2]},A_{k+2,L}).
$$
When $X=C$ is a curve of genus $g$, it is enough to assume that $\deg L \geq 2g+2k+2$ by \cite[Theorem 1.2]{ENP}. Recalling that $H^0(X^{[k+2]}, N_{k+2,A}) \cong \wedge^{k+2} H^0(X, A)$ and $H^0(X^{[k+2]}, N_{k+2,B}) \cong \wedge^{k+2} H^0(X, B)$, we may identify the map 
\[m_{A,B}^{k+2}\colon  \wedge^{k+2}H^0(X,A)\otimes \wedge^{k+2}H^0(X,B) \longrightarrow  I(\Sigma_k(X,L))_{k+2}\]
in  \eqref{eq:mABk+2} with the multiplication map 
\[  m^{k+2}_{A,B}\colon H^0(X^{[k+2]},N_{k+2,A}) \otimes H^0(X^{[k+2]},N_{k+2,B}) \longrightarrow H^0(X^{[k+2]},A_{k+2,L}) \]
of global sections on $X^{[k+2]}$. 
Now, note that the ideal $I(\Sigma_k(X,L))$ is generated by the $(k+2) \times (k+2)$-minors of $\operatorname{Cat}(A,B)$ if and only if the map $m^{k+2}_{A,B}$ is surjective. To show that the map $m^{k+2}_{A,B}$ is surjective, we take an algebraic approach in this section, and we employ  a cohomological approach in Section \ref{sec:cohomology}. 

\begin{lemma}\label{lem:mapsheaves}
Assume that $A,B$ are sufficiently ample line bundles on $X$. Then 
	The map $m^{k+2}_{A,B}$ is surjective if and only if the map 
	\begin{equation}\label{eq:maph} 
		h_*\sO_{X^{[k+2]}}(-\delta_{k+2}) \otimes h_*\sO_{X^{[k+2]}}(-\delta_{k+2}) \longrightarrow h_*\sO_{X^{[k+2]}}(-2\delta_{k+2})
	\end{equation}
	of sheaves on the symmetric product $X^{(k+2)}$ is surjective, where $h=h_{k+2} \colon X^{[k+2]} \to X^{(k+2)}$ is the Hilbert--Chow morphism.
\end{lemma}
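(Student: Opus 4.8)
The plan is to push everything down to the symmetric product $X^{(k+2)}$ along the Hilbert--Chow morphism $h=h_{k+2}$ and to recognize $m^{k+2}_{A,B}$ as the map induced on global sections, up to a twist by a line bundle, by the sheaf map \eqref{eq:maph}. Write $\psi$ for the map \eqref{eq:maph}, i.e. the natural multiplication map $h_*\sO_{X^{[k+2]}}(-\delta_{k+2})\otimes h_*\sO_{X^{[k+2]}}(-\delta_{k+2})\to h_*\sO_{X^{[k+2]}}(-2\delta_{k+2})$.

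First I would set up the identifications on $X^{(k+2)}$. Using $N_{k+2,A}\cong h^*S_{k+2,A}\otimes N_{k+2,\sO_X}$ (and similarly for $B$ and for $L=A\otimes B$, recalling $N_{k+2,\sO_X}=\sO_{X^{[k+2]}}(-\delta_{k+2})$ and $A_{k+2,L}=N_{k+2,A}\otimes N_{k+2,B}=h^*S_{k+2,L}\otimes \sO_{X^{[k+2]}}(-2\delta_{k+2})$), the projection formula gives
\[ h_*N_{k+2,A}\cong S_{k+2,A}\otimes h_*\sO_{X^{[k+2]}}(-\delta_{k+2}),\qquad h_*N_{k+2,B}\cong S_{k+2,B}\otimes h_*\sO_{X^{[k+2]}}(-\delta_{k+2}), \]
and $h_*A_{k+2,L}\cong S_{k+2,L}\otimes h_*\sO_{X^{[k+2]}}(-2\delta_{k+2})$. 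Since the formation of the multiplication map $h_*\mathcal F\otimes h_*\mathcal G\to h_*(\mathcal F\otimes \mathcal G)$ commutes with pulling out line bundles of the form $h^*(-)$, the sheaf multiplication $h_*N_{k+2,A}\otimes h_*N_{k+2,B}\to h_*A_{k+2,L}$ is, after factoring out $S_{k+2,A}\otimes S_{k+2,B}\cong S_{k+2,L}$, exactly $\psi\otimes \mathrm{id}_{S_{k+2,L}}$. Using $H^0(X^{[k+2]},-)=H^0(X^{(k+2)},h_*(-))$, the map $m^{k+2}_{A,B}$ then factors as
\[ H^0(h_*N_{k+2,A})\otimes H^0(h_*N_{k+2,B})\xrightarrow{\ \mu\ } H^0(h_*N_{k+2,A}\otimes h_*N_{k+2,B})\xrightarrow{\ H^0(\psi\otimes \mathrm{id}_{S_{k+2,L}})\ } H^0(h_*A_{k+2,L}). \]

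Next I would dispose of $\mu$: since $h_*\sO_{X^{[k+2]}}(-\delta_{k+2})=\mathcal N_{k+2,\sO_X}$ is a fixed coherent sheaf on $X^{(k+2)}$ and $S_{k+2,A},S_{k+2,B}$ are sufficiently ample on $X^{(k+2)}$ (as $A,B$ are sufficiently ample on $X$), Lemma \ref{lem:suffamplelemma}(4) shows $\mu$ is surjective. Hence $m^{k+2}_{A,B}$ is surjective if and only if $H^0(\psi\otimes \mathrm{id}_{S_{k+2,L}})$ is surjective. Finally, since $L$ is sufficiently ample on $X$, the line bundle $S_{k+2,L}$ is sufficiently ample on $X^{(k+2)}$, so by Lemma \ref{lem:suffamplelemma}(1) the sheaf map $\psi$ is surjective if and only if $H^0(\psi\otimes S_{k+2,L})$ is surjective. (Concretely: if $\psi$ is surjective then $H^1(\ker\psi\otimes S_{k+2,L})=0$ by Fujita--Serre, giving surjectivity on $H^0$; conversely surjectivity on $H^0$ forces $H^0(\operatorname{coker}\psi\otimes S_{k+2,L})=0$, hence $\operatorname{coker}\psi=0$ because $S_{k+2,L}$ is sufficiently ample.) Chaining the two equivalences yields the lemma.

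The only genuinely delicate points I anticipate are in the first step: checking that the various canonical isomorphisms are compatible with the multiplication maps — in particular that the natural map $h_*N_{k+2,A}\otimes h_*N_{k+2,B}\to h_*(N_{k+2,A}\otimes N_{k+2,B})$ really becomes $\psi\otimes\mathrm{id}_{S_{k+2,L}}$ after extracting the line-bundle factors — and the (standard but fussy) fact used in the last step that a single sufficiently ample twist already detects surjectivity of a morphism of sheaves. Neither is expected to present a real obstruction; the argument is essentially a careful translation of $m^{k+2}_{A,B}$ through the Hilbert--Chow morphism plus two applications of Lemma \ref{lem:suffamplelemma}.
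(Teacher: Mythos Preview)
Your proof is correct and follows essentially the same route as the paper's: push forward along $h$, factor $m^{k+2}_{A,B}$ as the K\"unneth-type map $\mu$ followed by $H^0(\psi\otimes S_{k+2,L})$, kill $\mu$ with Lemma~\ref{lem:suffamplelemma}(4), and then use Lemma~\ref{lem:suffamplelemma}(1) to pass between surjectivity of $\psi$ and of its twist on global sections. Your parenthetical unpacking of the last step (via $H^1$ of the kernel and $H^0$ of the cokernel) is exactly the content of Lemma~\ref{lem:suffamplelemma}(1), and the compatibility check you flag is handled tacitly in the paper as well.
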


\begin{proof}
	Via pushforward along the Hilbert--Chow morphism $h$, we can rewrite  $m^{k+2}_{A,B}$ as
	\[ m^{k+2}_{A,B}\colon H^0(X^{(k+2)},h_*N_{k+2,A}) \otimes H^0(X^{(k+2)},h_*N_{k+2,B}) \longrightarrow H^0(X^{(k+2)},h_*A_{k+2,A\otimes B}), \]
	and this map factors as the composition of the following two maps
	\begin{align*}
	\mu^{k+2}_{A,B} &\colon H^0(X^{(k+2)},h_*N_{k+2,A}) \otimes H^0(X^{(k+2)},h_*N_{k+2,B}) \longrightarrow H^0(X^{(k+2)},h_*N_{k+2,A}\otimes h_*N_{k+2,B}) \\
	\nu^{k+2}_{A,B} &\colon H^0(X^{(k+2)},h_*N_{k+2,A}\otimes h_*N_{k+2,B}) \longrightarrow H^0(X^{(k+2)},h_*A_{k+2,A\otimes B}).
	\end{align*}
	We have seen in \eqref{eq:isopushforward} that 
	\[ h_*N_{k+2,A} \cong \mathcal{N}_{k+2,A} \cong S_{k+2,A}\otimes \mathcal{N}_{k+2,\sO_X} \cong S_{k+2,A}\otimes h_*\sO_{X^{[k+2]}}(-\delta_{k+2})\] 
	with analogous isomorphisms for the line bundle $B$. Note that $S_{k+2, A}$ is sufficiently ample on $X^{(k+2)}$ as $A$ is. Then we see from Lemma \ref{lem:suffamplelemma} (4) that the multiplication map $\mu^{k+2}_{A,B}$ is surjective. Thus $m_{A,B}^{k+2}$ is surjective if and only if $\nu^{k+2}_{A,B}$ is surjective.  Since  $\nu^{k+2}_{A,B}$ is the map on global sections induced by the map of sheaves
	\begin{equation*} 
		(h_*\sO_{X^{[k+2]}}(-\delta_{k+2}) \otimes h_*\sO_{X^{[k+2]}}(-\delta_{k+2})) \otimes S_{k+2,L} \to 	
		h_*\sO_{X^{[k+2]}}(-2\delta_{k+2})\otimes S_{k+2,L}, 
	\end{equation*}
	it follows from Lemma \ref{lem:suffamplelemma} (1) that $\nu_{A,B}^{k+2}$ is surjective if and only if the map \eqref{eq:maph} is surjective. 
\end{proof}

This lemma yields a quick proof of Theorem \ref{thm:SS}.

\begin{proof}[Proof of Theorem  \ref{thm:SS}]
Recall that if $A,B$ are sufficiently ample on $X$ and $L:=A \otimes B$, then the ideal $I(\Sigma_k(X,L))$ is generated in degree $k+2$ by \cite[Theorem B]{CLPS25+}. Thus $I(\Sigma_k(X,L))$  is generated by the $(k+2) \times (k+2)$-minors of $\operatorname{Cat}(A,B)$ if and only if the map $m^{k+2}_{A,B}$ is surjective. By Lemma \ref{lem:mapsheaves}, $m^{k+2}_{A,B}$ is surjective if and only if the map  \eqref{eq:maph} of sheaves on $X^{(k+2)}$ is surjective. But the map \eqref{eq:maph} is the same as the surjective multiplication map $\mathcal{N}_{k+2,\sO_X}\otimes \mathcal{N}_{k+2,\sO_X} \to h_*\sO_{X^{[k+2]}}(-E_{k+2})$ in Proposition \ref{prop:idealXmsmooth}. 
\end{proof}

\begin{remark}
Following the reasoning of the proof, we see that Theorem \ref{thm:SS} is reduced to the statement that the map \eqref{eq:maph} of sheaves on the symmetric product is surjective. Since this is a local statement, it is independent of the particular smooth variety $X$. If it holds for one smooth variety $X$ of dimension $n$, then it holds for all such $X$. Since Raicu proved in \cite[Corollary 5.2]{Raicu} a stronger version of Theorem \ref{thm:SS} for $k=1$ and $X=\nP^n$, this leads to another proof of Theorem \ref{thm:SS} for $k=1$. 
\end{remark}

\section{Projective normality of Hilbert schemes in Grassmannians}\label{sec:projnorm}

\noindent Now, we want to use the ideas of Section \ref{sec:algebra} to prove Theorem \ref{thm:projnormalityhilb} from the introduction. We recall the setting. Let $X$ be a smooth variety of dimension $n\leq 2$, so either a curve or a surface, take $k\geq 0$, and let $L$ be a $(k+1)$-very ample line bundle on $X$. Then there is a well-defined map
\[ \varphi_{L,k+2}\colon X^{[k+2]} \longrightarrow G(k+2,H^0(X,L)) \]  
into the Grassmannian of $d$-dimensional quotients of $H^0(X,L)$: to each $\xi\in X^{[k+2]}$ one associates
\[ \varphi_{L,k+2}\colon \xi \longmapsto \left[H^0(X,L) \longtwoheadrightarrow H^0(X,L\otimes \sO_{\xi})\right] \]
This is the map induced by the globally generated tautological bundle $E_{k+2,L}$ and the composition with the Pl\"ucker embedding
\[  X^{[k+2]} \longrightarrow G(k+2,H^0(X,L)) \longhookrightarrow \nP(\wedge^{k+2}H^0(X,L)) \]
is precisely the map induced by the complete linear system of $N_{k+2,L} = \wedge^{k+2}E_{k+2,L}$. It was furthermore proved by Catanese--G\"ottsche \cite{CG} that the map $\varphi_{L, k+2}$ is a closed embedding if $L$ is $(k+2)$-very ample. 
We aim to show that if $L$ is sufficiently ample, then the embedding is projectively normal, meaning that all multiplication maps
\begin{equation}\label{eq:multmapell} 
	H^0(X^{[k+2]},N_{k+2,L})^{\otimes \ell} \longrightarrow H^0(X^{[k+2]},N_{k+2,L}^{\otimes \ell}) \qquad \text{ for all } \ell\geq 1 
\end{equation}
are surjective.

\begin{lemma}\label{lem:regularity}
	Let $X$ be a smooth projective variety of dimension $n\leq 2$ and let $L$ be a sufficiently ample line bundle on $X$. Then the embedding $\varphi_{L, k+2} \colon X^{[k+2]} \hookrightarrow \nP(\wedge^{k+2}H^0(X,L))$ is projectively normal if and only if the maps \eqref{eq:multmapell} are surjective for $1\leq \ell \leq n\cdot (k+2) + 1$.
\end{lemma}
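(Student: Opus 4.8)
The ``only if'' direction is just the definition of projective normality, so the content is the converse. Write $Y:=X^{[k+2]}$, $N:=N_{k+2,L}$ and $r:=\dim Y=n(k+2)$, so that the bound in the statement is $m:=r+1$. Since $L$ is sufficiently ample it is in particular $(k+2)$-very ample (Lemma \ref{lem:suffamplelemma}(2)), so $N=\wedge^{k+2}E_{k+2,L}$ is globally generated and $\varphi_{L,k+2}$ realises $Y$ as a subvariety of $\nP(\wedge^{k+2}H^0(X,L))$ with hyperplane bundle $N$; in particular $N$ is very ample. The plan is to show that the structure sheaf $\sO_Y$ is $m$-regular with respect to $N$ in the sense of Castelnuovo--Mumford, and then to feed this into Mumford's regularity theorem together with the hypothesis on the low-degree multiplication maps.

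The main step is the vanishing
\[ H^i\bigl(Y,N^{\otimes j}\bigr)=0\qquad\text{for all } i>0 \text{ and } 1\le j\le r. \]
For each fixed $j$ in this finite range, $L^{\otimes j}$ is again sufficiently ample on $X$ (choosing the threshold line bundle for $L$ large enough, which is legitimate since $j$ is bounded), hence $S_{k+2,L^{\otimes j}}$ is sufficiently ample on $X^{(k+2)}$; and from $N_{k+2,L}\cong T_{k+2,L}(-\delta_{k+2})$ and $T_{k+2,L}=h^*S_{k+2,L}$ one gets $N^{\otimes j}\cong \sO_Y(-j\delta_{k+2})\otimes h^*S_{k+2,L^{\otimes j}}$, where $h=h_{k+2}\colon Y\to X^{(k+2)}$ is the Hilbert--Chow morphism. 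By Lemma \ref{lem:suffamplelemma}(6) the displayed vanishing is therefore equivalent to
\[ R^ih_*\,\sO_Y(-j\delta_{k+2})=0\qquad\text{for all } i>0 \text{ and } 1\le j\le r. \]
For $n=1$ this is trivial since $h$ is an isomorphism, and for $n=2$ it is a known cohomology vanishing for Hilbert schemes of points on surfaces: $X^{(k+2)}$ has rational singularities, $h$ is the blow-up along the reduced big diagonal (Proposition \ref{prop:idealXmsmooth}) with $-\delta_{k+2}$ relatively ample, and the vanishing follows by induction on $j$ from the exact sequences relating $\sO_Y(-jE_{k+2})$ for consecutive $j$ (alternatively one may quote Haiman \cite{Haiman}). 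Granting this, the condition ``$H^i(Y,\sO_Y\otimes N^{\otimes(m-i)})=0$ for all $i>0$'' defining $m$-regularity holds: for $1\le i\le r$ the twist $m-i$ lies between $1$ and $r$, and for $i\ge r+1$ it is automatic for dimension reasons. Thus $\sO_Y$ is $m$-regular with respect to $N$.

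Now Mumford's regularity theorem (\cite[Theorem 1.8.5]{LazPosI} and its proof, applied after the closed embedding $\varphi_{L,k+2}$) applies: $\sO_Y$ is then also $\ell$-regular with respect to $N$ for every $\ell\ge m$, and consequently the multiplication map $H^0(Y,N^{\otimes(\ell-1)})\otimes H^0(Y,N)\to H^0(Y,N^{\otimes\ell})$ is surjective for every $\ell\ge m+1$. An induction on $\ell$ now finishes the argument: the map $H^0(Y,N)^{\otimes\ell}\to H^0(Y,N^{\otimes\ell})$ is surjective for $1\le\ell\le m$ by hypothesis, while for $\ell\ge m+1$ the image of $H^0(Y,N)^{\otimes\ell}$ in $H^0(Y,N^{\otimes\ell})$ equals the image of $H^0(Y,N^{\otimes(\ell-1)})\otimes H^0(Y,N)$, which is everything by the inductive hypothesis for $\ell-1$ combined with the surjectivity just recorded. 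Hence $\varphi_{L,k+2}$ is projectively normal.

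The one genuinely nontrivial input, and thus the main obstacle, is the higher direct image vanishing $R^ih_*\sO_{X^{[k+2]}}(-j\delta_{k+2})=0$ for $i>0$ in the surface case; everything else is formal bookkeeping with Castelnuovo--Mumford regularity once this is in place. I expect this to be the only point where the hypothesis $n\le 2$ is used beyond the smoothness of $Y$, and that it is most transparently obtained from the blow-up description of the Hilbert--Chow morphism in Proposition \ref{prop:idealXmsmooth}.
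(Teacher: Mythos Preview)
Your argument is correct and reaches the same endpoint as the paper, but the route is more circuitous. The paper establishes the needed vanishing $H^i(X^{[k+2]},N^{\otimes j})=0$ for $i>0,\ j\ge 1$ directly by global Kodaira vanishing: one computes $\omega_{X^{[k+2]}}\cong T_{k+2,\omega_X}\otimes\sO((n-2)\delta_{k+2})$, so for $n=1$ one has $N\otimes\omega^{-1}\cong T_{k+2,L\otimes\omega_X^{-1}}=S_{k+2,L\otimes\omega_X^{-1}}$ (ample since $L\otimes\omega_X^{-1}$ is), and for $n=2$ one has $N\otimes\omega^{-1}\cong N_{k+2,L\otimes\omega_X^{-1}}$ (very ample since $L\otimes\omega_X^{-1}$ is $(k+2)$-very ample). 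You instead push everything to $X^{(k+2)}$ and reduce to the relative vanishing $R^ih_*\sO(-j\delta_{k+2})=0$, which you justify only by a vague sketch (``induction on $j$ from exact sequences relating $\sO(-jE_{k+2})$'' and an unspecified appeal to Haiman). That induction is not obviously workable, and the cleanest proof of your relative vanishing is in fact relative Kodaira vanishing using exactly the same formula for $\omega_{X^{[k+2]}}$---so your detour ends up needing the paper's key computation anyway, just packaged locally rather than globally. Your treatment of the induction step from Castelnuovo--Mumford regularity to surjectivity of \emph{all} multiplication maps is, on the other hand, slightly more explicit than the paper's.
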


\begin{proof}
	First we observe that if $L$ is sufficiently ample, then it is also $(k+2)$-very ample on $X$ thanks to Lemma \ref{lem:suffamplelemma}  so that the map $\varphi_{L, k+2}$ is an embedding, induced by the complete linear system of $N_{k+2,L}$. The Castelnuovo--Mumford regularity  $\reg(N_{k+2,L})$ is the smallest $r$ such that 
	\begin{equation}\label{eq:cmregularity} 
		H^i(X^{[k+2]},N_{k+2,L}^{\otimes (r-i)})=0 \qquad \text{ for all } i>0 .
	\end{equation}
	It is well known that the multiplication maps \eqref{eq:multmapell} are surjective for all $\ell > \reg(N_{k+2,L})$ so we are done if we can show the vanishings of \eqref{eq:cmregularity} for $r=(k+2)n+1$. Since in our hypotheses $X^{[k+2]}$ is smooth, an application of Kodaira vanishing theorem shows that it is enough to prove that $N_{k+2,L}\otimes  \omega_{X^{[k+2]}}^{-1}$ is ample on $X^{[k+2]}$. We have  $\omega_{X^{[k+2]}}\cong T_{k+2,\omega_X}\otimes \sO_{X^{[k+2]}}((n-2)\delta_{k+2})$, so that  $N_{k+2,L}\otimes  \omega_{X^{[k+2]}}^{-1}\cong T_{k+2,L\otimes \omega_X^{-1}}\otimes \sO((1-n)\delta_{k+2})$. If $n=1$ we can assume that $L\otimes \omega_X^{-1}$ is ample on $X$ so that $N_{k+2,L}\otimes  \omega_{X^{[k+2]}}^{-1} = T_{k+2,L\otimes \omega_X^{-1}}=S_{k+2,L\otimes \omega_X^{-1}}$ is ample on $X^{[k+2]}$. If instead $n=2$, then we can assume that $L\otimes \omega_X^{-1}$ is $(k+2)$-very ample on $X$ so that $N_{k+2,L}\otimes  \omega_{X^{[k+2]}}^{-1} = N_{k+2,L\otimes \omega_X^{-1}}$ is very ample on $X^{[k+2]}$.
\end{proof}

Now we can prove Theorem \ref{thm:projnormalityhilb}.

\begin{proof}[Proof of Theorem \ref{thm:projnormalityhilb}]
	Thanks to Lemma \ref{lem:regularity}, we need to prove that if $L$ is sufficiently ample on $X$, then all multiplication maps $H^0(X^{[k+2]},N_{k+2,L})^{\otimes \ell} \to H^0(X^{[k+2]},N_{k+2,L}^{\otimes \ell})$ are surjective for $1\leq \ell \leq n(k+2)+1$. By pushing forward to the symmetric product via the Hilbert--Chow morphism $h\colon X^{[k+2]} \to X^{(k+2)}$, and reasoning as in Lemma \ref{lem:mapsheaves} we are done if we can prove that the map of sheaves on $X^{(k+2)}$ 
	\[ \left( h_*N_{k+2,\sO_X} \right)^{\otimes \ell} \longrightarrow h_*(N_{k+2,\sO_X}^{\otimes \ell}) \]
	are surjective for all $1\leq \ell \leq n(k+2)+1$. If $n=1$, then this is clear since $h$ is an isomorphism. If $n=2$, then we see from \cite[Theorem 1.8]{ScalaDiag} that this map is the same as the multiplication map
	\[ \left( \mathcal{N}_{k+2,\sO_X} \right)^{\otimes \ell} \longrightarrow q^{\mathfrak{S}_{k+2}}_{k+2,*}(\mathcal{I}_{\Delta_{k+2}}^{\ell}\otimes \varepsilon_{k+2}^{\ell}),  \]
	where $\mathcal{I}_{\Delta_{k+2}}$ is the ideal sheaf of the big diagonal in the Cartesian product $X^{k+2}$. Working locally, we can assume that $X$ is an affine space, so that Proposition \ref{prop:idealXmsmooth} shows that the image of the map is given by $q_{k+2,*}^{\mathfrak{S}_{k+2}}(\mathcal{J}_{k+2}^{\ell}\otimes \varepsilon_{k+2}^{\ell})$, where $\mathcal{J}_{k+2}$ is the ideal sheaf on $X^{k+2}$ generated by alternating functions. Finally, since we are assuming $n=2$, Remark \ref{rmk:equalityideals} proves that $\mathcal{I}_{\Delta_{k+2}} = \mathcal{J}_{k+2}$ and we are done.
\end{proof}

\section{Determinantal ideals via cohomology}\label{sec:cohomology}

\noindent The aim of this section is to prove Theorem \ref{thm:EKS} and Theorem \ref{thm:SS} from the introduction using a cohomological method. This is essential in order to obtain the effective statement of Theorem \ref{thm:EKS}, but this points to the statement that one would like to prove in order to obtain an effective result for Theorem \ref{thm:SS}. Let again $X$ be a smooth projective variety of dimension $n$, and assume that $n\leq 2$ or $k\leq 1$. Choose two line bundles $A,B$ on $X$, and assume that $B$ is $(k+1)$-very ample, so that the tautological bundle $E_{k+2,B}$ is globally generated and we have an exact sequence of vector bundles:
\begin{align} 
 0 \longrightarrow M_{k+2,B} \longrightarrow &H^0(X,B)\otimes \sO_{X^{[k+2]}} \longrightarrow E_{k+2,B} \longrightarrow 0. \label{eq:seqMB} 
 \end{align} 
 If we now use Lemma \ref{lem:seqbundles} and we tensor by $N_{k+2,A}$, we obtain two exact sequences
 \begin{align}
  \cdots \longrightarrow  F_2 \longrightarrow  F_1 \longrightarrow  \wedge^{k+2} H^0(X,B) \otimes N_{k+2,A} \longrightarrow N_{k+2,A}\otimes N_{k+2,B}\longrightarrow  0 	\label{eq:exseqM}	\\
 \cdots \longrightarrow  G_2 \longrightarrow  G_1\longrightarrow  \wedge^{k+2} H^0(X,B) \otimes N_{k+2,A} \longrightarrow N_{k+2,A}\otimes N_{k+2,B}\longrightarrow  0 	\label{eq:dualexseq}
 \end{align}
 with 
 \[ F_i = \wedge^{k+2-i}H^0(X,B)\otimes S^iM_{k+2,B}\otimes N_{k+2,A}~~\text{ and }~~ G_i = \wedge^{k+2+i}H^0(X,B)\otimes S^iE_{k+2,B}^{\vee}\otimes N_{k+2,A}.
 \] 
 Then we can prove the following:
 
\begin{lemma}\label{lem:cohvanishing}
	Assume that either one of the following vanishings hold:
	\begin{align*} 
		&H^i(X^{[k+2]},S^iM_{k+2,B}\otimes N_{k+2,A}) = 0 \qquad \text{for } 1\leq i \leq k+2\\
		&H^i(X^{[k+2]},S^iE_{k+2,B}^{\vee}\otimes N_{k+2,A}) = 0 \qquad \text{ for } i>0.
	\end{align*}
	Then the multiplication map 
	$$
	m^{k+2}_{A,B}\colon H^0(X^{[k+2]},N_{k+2,A}) \otimes H^0(X^{[k+2]},N_{k+2,B}) \longrightarrow H^0(X^{[k+2]},A_{k+2,L})
	$$ 
	is surjective. 
\end{lemma}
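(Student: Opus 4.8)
The plan is to run a straightforward cohomology chase on the two complexes \eqref{eq:exseqM} and \eqref{eq:dualexseq}, showing that each of the two hypotheses, taken on its own, already forces surjectivity. First I would recast the statement in terms of sheaf cohomology: recalling the relations among tautological bundles (Section~\ref{sec:yoga}) one has $N_{k+2,A}\otimes N_{k+2,B}\cong A_{k+2,L}$, so $H^0(X^{[k+2]},N_{k+2,A}\otimes N_{k+2,B})\cong H^0(X^{[k+2]},A_{k+2,L})$, and under the canonical identifications $H^0(X^{[k+2]},N_{k+2,A})\cong\wedge^{k+2}H^0(X,A)$, $H^0(X^{[k+2]},N_{k+2,B})\cong\wedge^{k+2}H^0(X,B)$ the map $m^{k+2}_{A,B}$ is precisely the map on global sections induced by the rightmost morphism of sheaves $\wedge^{k+2}H^0(X,B)\otimes N_{k+2,A}\to N_{k+2,A}\otimes N_{k+2,B}$ that occurs in both \eqref{eq:exseqM} and \eqref{eq:dualexseq}. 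So it suffices to show this sheaf map is surjective on $H^0$. (The complexes themselves are available because $B$ is assumed $(k+1)$-very ample, so $E_{k+2,B}$ is globally generated and the kernel bundle sequence \eqref{eq:seqMB} exists; then \eqref{eq:exseqM} and \eqref{eq:dualexseq} come from Lemma~\ref{lem:seqbundles}.)

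Next I would break whichever exact sequence is relevant into short exact sequences $0\to K_{j+1}\to (\text{$j$-th term})\to K_j\to 0$, where $K_0:=N_{k+2,A}\otimes N_{k+2,B}$ and $K_{j+1}$ is the image of the next syzygy sheaf. Chasing the long cohomology sequences, surjectivity of $H^0(\text{$0$-th term})\to H^0(K_0)$ follows from $H^1(K_1)=0$, and $H^1(K_1)$ injects into $H^2(K_2)$, which injects into $H^3(K_3)$, and so on, provided that for each $i$ the group $H^i$ of the $i$-th term of the complex vanishes. Since the $i$-th term of \eqref{eq:exseqM} is $F_i=\wedge^{k+2-i}H^0(X,B)\otimes S^iM_{k+2,B}\otimes N_{k+2,A}$ and that of \eqref{eq:dualexseq} is $G_i=\wedge^{k+2+i}H^0(X,B)\otimes S^iE_{k+2,B}^{\vee}\otimes N_{k+2,A}$, with the leading exterior power being a fixed coefficient vector space, the needed vanishing $H^i(F_i)=0$ (respectively $H^i(G_i)=0$) is equivalent to $H^i(X^{[k+2]},S^iM_{k+2,B}\otimes N_{k+2,A})=0$ (respectively $H^i(X^{[k+2]},S^iE_{k+2,B}^{\vee}\otimes N_{k+2,A})=0$), i.e.\ exactly one of the two hypotheses. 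For \eqref{eq:exseqM} the complex terminates, since $F_i=0$ once $i>k+2$, so $K_{k+2}=F_{k+2}$ and the chain of injections $H^1(K_1)\hookrightarrow\cdots\hookrightarrow H^{k+2}(K_{k+2})=H^{k+2}(F_{k+2})$ only requires the vanishings for $1\le i\le k+2$, which is precisely the first hypothesis. For \eqref{eq:dualexseq} the complex may be long, but $X^{[k+2]}$ is smooth of dimension $n(k+2)$, so $H^j$ of any sheaf vanishes for $j>n(k+2)$; hence the chain $H^1(K_1)\hookrightarrow H^2(K_2)\hookrightarrow\cdots$ reaches a zero group after finitely many steps, and only the vanishings $H^i(G_i)=0$ for $1\le i\le n(k+2)$ are used — all of them supplied by the second hypothesis, which gives vanishing for every $i>0$.

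I do not anticipate a real obstacle: the argument is essentially bookkeeping. The only point deserving care is the first step, namely confirming that the connecting sheaf morphism genuinely induces $m^{k+2}_{A,B}$ on global sections and not some twist of it — this is immediate from the explicit form of the Koszul-type maps in Lemma~\ref{lem:seqbundles} together with the canonical identification $H^0(X^{[k+2]},N_{k+2,B})\cong\wedge^{k+2}H^0(X,B)$ — and, to a lesser degree, keeping the index ranges straight so that the (possibly infinite) resolution \eqref{eq:dualexseq} is handled via the dimension bound rather than termination.
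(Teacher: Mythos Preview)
Your proposal is correct and follows exactly the paper's approach: both arguments take global sections in the rightmost map of the resolutions \eqref{eq:exseqM} and \eqref{eq:dualexseq} and chase cohomology, reducing surjectivity of $m^{k+2}_{A,B}$ to the vanishings $H^i(F_i)=0$ for $1\le i\le k+2$ or $H^i(G_i)=0$ for $i>0$. You simply spell out the chain of injections and the termination (via $F_i=0$ for $i>k+2$, respectively via the dimension bound on $X^{[k+2]}$) more explicitly than the paper's terse two-line version.
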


\begin{proof}
	The map $m^{k+2}_{A,B}$ is obtained from the exact sequences \eqref{eq:exseqM} and \eqref{eq:dualexseq} by taking global sections in the right-most nonzero map. Hence, taking cohomology, we see that if $H^i(X^{[k+2]},F_i)=0$ for all $1\leq i \leq k+2$ or if $H^i(X^{[k+2]},G_i)=0$ for all $i>0$, then $m^{k+2}_{A,B}$ is surjective. These vanishing conditions are precisely those in our statement.
\end{proof}

\subsection{An effective result for the curve case} 
Let $C$ be a smooth curve of genus $g$, and $A, B$ be two line bundles on $C$. 
We can prove a more precise version of Theorem \ref{thm:EKS}. The statement should be compared with \cite[Theorem 1.3]{NP24+}

\begin{theorem}\label{thm:precisethmA}
	Assume that $A$ is nonspecial and $(k+1)$-very ample and
	\[ h^1(C,B\otimes A^{-1})\leq r(A)-k-2, \]
	where $r(A):=h^0(C,A)-1$. Then the map $m^{k+2}_{A,B}$ is surjective.
\end{theorem}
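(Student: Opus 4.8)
The plan is to deduce Theorem~\ref{thm:precisethmA} by matching the hypotheses of the cohomological criterion in Lemma~\ref{lem:cohvanishing} with the effective vanishing statement of Proposition~\ref{prop:cohvanishingcurves}. Recall that $B$ is $(k+1)$-very ample here, so that the kernel bundle $M_{k+2,B}$, the resolutions \eqref{eq:exseqM}--\eqref{eq:dualexseq}, and the map $m^{k+2}_{A,B}$ are all defined. The single hypothesis of Proposition~\ref{prop:cohvanishingcurves} that is not literally part of the statement of Theorem~\ref{thm:precisethmA} is that $B$ be \emph{nonspecial}, so the first step will be to check that this is already forced by the numerical bound $h^1(C,B\otimes A^{-1})\le r(A)-k-2$.

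For this I would suppose toward a contradiction that $B$ is special, i.e.\ that $h^0(C,\omega_C\otimes B^{-1})>0$, and write $\omega_C\otimes B^{-1}\cong\sO_C(D)$ for an effective divisor $D$ on $C$. Then $\omega_C\otimes A\otimes B^{-1}\cong A(D)$, and since $A\subseteq A(D)$ Serre duality gives
\[ h^1(C,B\otimes A^{-1})=h^0(C,\omega_C\otimes A\otimes B^{-1})=h^0(C,A(D))\ \ge\ h^0(C,A)=r(A)+1. \]
Because $k\ge 0$, this contradicts $h^1(C,B\otimes A^{-1})\le r(A)-k-2$, so $B$ must be nonspecial.

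With this established, $A$ and $B$ are both nonspecial and $(k+1)$-very ample and satisfy $h^1(C,B\otimes A^{-1})\le r(A)-k-2$, so Proposition~\ref{prop:cohvanishingcurves} applies and yields
\[ H^i\bigl(C^{[k+2]},\,S^iM_{k+2,B}\otimes N_{k+2,A}\bigr)=0\qquad\text{for all }1\le i\le k+2. \]
The first of the two alternative hypotheses in Lemma~\ref{lem:cohvanishing} is then satisfied, which forces the multiplication map $m^{k+2}_{A,B}$ to be surjective; this is exactly the assertion of Theorem~\ref{thm:precisethmA}.

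The real content has therefore already been packaged elsewhere: the lifting isomorphism of Proposition~\ref{prop:lifting}, which replaces cohomology on $C^{[k+2]}$ by cohomology on the smaller $C^{[i]}$, together with the subsequent reduction (via repeated Leray spectral sequences along the addition maps and Green's vanishing theorem) carried out in the proof of Proposition~\ref{prop:cohvanishingcurves}, and the resolution argument packaged in Lemma~\ref{lem:cohvanishing}. Consequently I expect the only point in the present proof that requires any care to be the hypothesis-chasing of the second paragraph — recognizing that the lone bound on $h^1(C,B\otimes A^{-1})$ suffices to place us in the symmetric setting of Proposition~\ref{prop:cohvanishingcurves} — and I do not foresee any further obstacle.
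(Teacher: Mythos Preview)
Your overall strategy --- verify the missing hypotheses on $B$, invoke Proposition~\ref{prop:cohvanishingcurves}, then feed the resulting vanishings into Lemma~\ref{lem:cohvanishing} --- is exactly the paper's approach. But there is a gap in the hypothesis-chasing: you write ``Recall that $B$ is $(k+1)$-very ample here,'' yet this is \emph{not} among the hypotheses of Theorem~\ref{thm:precisethmA}. The statement only assumes that $A$ is nonspecial and $(k+1)$-very ample together with the bound on $h^1(C,B\otimes A^{-1})$; nothing at all is assumed about $B$. Both Lemma~\ref{lem:cohvanishing} (for the existence of $M_{k+2,B}$ and the resolutions \eqref{eq:exseqM}--\eqref{eq:dualexseq}) and Proposition~\ref{prop:cohvanishingcurves} genuinely require $B$ to be $(k+1)$-very ample, so this is not cosmetic.

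The fix is the same idea as your nonspeciality argument, just with a nonempty divisor: suppose $h^1(C,B(-\xi))\neq 0$ for some $\xi\in C^{[k+2]}$, so that $\omega_C\otimes B^{-1}(\xi)\cong\sO_C(D)$ with $D\ge 0$. Then $\omega_C\otimes A\otimes B^{-1}\cong A(D-\xi)$ and hence
\[
h^1(C,B\otimes A^{-1})=h^0(C,A(D-\xi))\ \ge\ h^0(C,A(-\xi))=h^0(C,A)-(k+2)=r(A)-k-1,
\]
where the last equality uses the $(k+1)$-very ampleness of $A$. This contradicts the bound $h^1(C,B\otimes A^{-1})\le r(A)-k-2$, so in fact $h^1(C,B(-\xi))=0$ for every $\xi\in C^{[k+2]}$; this simultaneously gives that $B$ is nonspecial (take $\xi$ trivial) and $(k+1)$-very ample. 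This is precisely what the paper does, and once inserted your argument goes through verbatim.
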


\begin{proof} 
	We first observe that under our hypotheses the line bundle $B$ is also nonspecial and $(k+1)$-very ample. This follows from \cite[Theorem 1.3]{NP24+}. However, the proof is short, so we repeat it here for completeness.  We need to show that $h^1(C,B(-\xi))=0$ for all $\xi\in C^{[k+2]}$. If this fails, then there is some $\xi \in C^{[k+2]}$ such that $H^0(X,\omega_C\otimes B^{-1}(\xi)) \ne 0$. Then \[
	h^1(C,B\otimes A^{-1}) = h^0(C,\omega_C\otimes A\otimes B^{-1}) \geq  h^0(C,A(-\xi)) = h^0(A)-k-2 > r(A)-k-2,
	\]
	which is a contradiction. 
	Now, thanks to Lemma \ref{lem:cohvanishing}, it is enough to check that 
	$$
	H^i(X^{[k+2]},S^iM_{k+2,B}\otimes N_{k+2,A}) = 0 \quad \text{ for } 1\leq i \leq k+2,
	$$
	but this is a consequence of Proposition \ref{prop:cohvanishingcurves}.
\end{proof} 

This yields numerical bounds on the degrees of $A,B$ such that the $(k+2) \times (k+2)$-minors of $\operatorname{Cat}(A,B)$ generate the ideal of the $k$-th secant variety $\Sigma_k(C, A \otimes B)$.

\begin{corollary}\label{cor:precisethmA}
	Assume that one of the following holds:
	\begin{enumerate}
		\item  $\deg(A)\geq \deg(B)\geq 2g+k+1$ and $\deg(A\otimes B)\geq 4g+2k+3$.
		\item  $\deg(A)=\deg(B)=2g+k+1$, and if $g>0$, then $A\ncong B$.
		\item  $A$ is general with $\deg(A)\geq g+2k+3$ and $\deg(B)=2g+k+1$.
	\end{enumerate}
	Then the ideal $I(\Sigma_k(C,A\otimes B))$ is generated by the $(k+2) \times (k+2)$-minors of $\operatorname{Cat}(A,B)$.
\end{corollary}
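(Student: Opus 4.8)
The plan is to deduce the statement from Theorem~\ref{thm:precisethmA}, which supplies surjectivity of $m^{k+2}_{A,B}$, combined with the fact that $I(\Sigma_k(C,L))$ is generated in degree $k+2$ whenever $\deg L\geq 2g+2k+2$ (Ein--Niu--Park, \cite[Theorem 1.2]{ENP}). Granting both inputs, the degree-$(k+2)$ piece of $I(\Sigma_k(C,A\otimes B))$ coincides with the image of $m^{k+2}_{A,B}$, i.e.\ with the span of the $(k+2)\times(k+2)$-minors of $\operatorname{Cat}(A,B)$, so these minors generate the whole ideal, exactly as in the discussion following \eqref{eq:omega}. The degree hypothesis is immediate in each case: $\deg L\geq 4g+2k+3$ in~(1), $\deg L=4g+2k+2$ in~(2), and $\deg L\geq(g+2k+3)+(2g+k+1)=3g+3k+4$ in~(3), all of which exceed $2g+2k+2$.

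Hence the real work is to verify, case by case, the hypotheses of Theorem~\ref{thm:precisethmA}: that $A$ is nonspecial and $(k+1)$-very ample and that $h^1(C,B\otimes A^{-1})\leq r(A)-k-2$. In cases~(1) and~(2) one has $\deg A\geq 2g+k+1$, which gives nonspeciality (so $r(A)=\deg A-g$) and, since then $\deg A-(k+2)\geq 2g-1$, also $(k+1)$-very ampleness. In case~(3) I would instead invoke the standard facts that a general line bundle $A$ of degree $d$ on $C$ is nonspecial once $d\geq g$ and is $(k+1)$-very ample once $d\geq g+2k+3$ (the latter a Brill--Noether-type statement, provable by a short dimension count on $\operatorname{Pic}(C)\times C^{[k+2]}$), and again $r(A)=\deg A-g$. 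In all cases Riemann--Roch turns $h^1(C,B\otimes A^{-1})\leq r(A)-k-2$ into the equivalent inequality $h^0(C,B\otimes A^{-1})\leq\deg B-2g-k-1$. In~(2), where $\deg B=2g+k+1$, this reads $h^0(C,B\otimes A^{-1})=0$, true because $B\otimes A^{-1}$ has degree $0$ and is nontrivial when $g>0$. In~(1), either $\deg A>\deg B$, so $h^0(C,B\otimes A^{-1})=0$ and the inequality reduces to $\deg B\geq 2g+k+1$; or $\deg A=\deg B$, in which case $\deg L\geq 4g+2k+3$ forces $\deg A\geq 2g+k+2$ and the crude bound $h^0(C,B\otimes A^{-1})\leq1$ suffices. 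In~(3), $B\otimes A^{-1}$ is a general line bundle of degree $\deg B-\deg A\leq g-k-2<g$, hence has no sections. So in each case Theorem~\ref{thm:precisethmA} applies.

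The one genuinely exceptional situation is case~(2) with $g=0$, that is $A\cong B\cong\sO_{\nP^1}(k+1)$ and $L=\sO_{\nP^1}(2k+2)$, where $r(A)-k-2=-1$ and Theorem~\ref{thm:precisethmA} gives no information. Here I would argue directly: $\operatorname{Cat}(A,B)$ is the $(k+2)\times(k+2)$ catalecticant (Hankel) matrix attached to the rational normal curve of degree $2k+2$, the secant variety $\Sigma_k$ is the hypersurface cut out by its determinant, and $I(\Sigma_k)$ is principal — the classical description of equations of secants of rational normal curves, attributed to Wakerling in \cite{EKS1988}. I do not expect a single decisive obstacle here; the delicate points are the Brill--Noether-type input in case~(3) and remembering to dispatch the genus-zero boundary case by hand.
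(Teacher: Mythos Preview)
Your proposal is correct and follows essentially the same route as the paper: reduce to Theorem~\ref{thm:precisethmA} plus \cite[Theorem~1.2]{ENP}, then verify the numerical condition case by case via the Riemann--Roch reformulation $h^0(C,B\otimes A^{-1})\leq \deg B-2g-k-1$. Your case splits in~(1) and~(2) match the paper's exactly, and in~(3) you supply a dimension-count justification for the $(k+1)$-very ampleness of a general $A$ that the paper simply asserts. The only visible difference is the boundary case $g=0$ in~(2): the paper observes that $C^{[k+2]}\cong\nP^{k+2}$, so $m^{k+2}_{A,B}$ is just a multiplication map of line bundles on projective space and is trivially surjective, whereas you invoke the classical description of $\Sigma_k$ of a rational normal curve as a determinantal hypersurface. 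Both work; the paper's is a shade cleaner since it stays within the framework already set up.
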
 

\begin{proof}
	In any case, $\deg (A \otimes B) \geq 2g+2k+2$, so  the ideal $I(\Sigma_k(C,A\otimes B))$ is generated in degree $k+2$ by \cite[Theorem 1.2]{ENP}. Thus $I(\Sigma_k(C,A\otimes B))$ is generated by the $(k+2) \times (k+2)$-minors of $\operatorname{Cat}(A,B)$ if and only if $m^{k+2}_{A,B}$ is surjective. 	
	Now, note that both $A$ and $B$ are nonspecial and $(k+1)$-very ample in all cases. For the surjectivity of $m^{k+2}_{A,B}$, we need to check the condition of Theorem \ref{thm:precisethmA}, which can be rewritten as
	\[ \deg(B) \geq h^0(C,B\otimes A^{-1}) +2g+k+1\]
by Riemann--Roch. In the case (1), we  have $h^0(C,B\otimes A^{-1})\leq 1$ by degree reasons. If $\deg(B) \geq 2g+k+2$, then we are done. If instead $\deg(B)=2g+k+1$, then it must be that $\deg(A)>\deg(B)$, so that $h^0(C,B\otimes A^{-1})=0$ and we are done. In the cases (2) for $g>0$ and (3), it is straightforward to see that $h^0(C,B\otimes A^{-1})=0$, so we are done. In the case (2), if $g=0$, then $C^{[k+2]}=\nP^{k+2}$, so that the surjectivity of the multiplication map $m_{A,B}^{k+2}$ is clear. 
\end{proof}

Finally, we give the proof of Theorem \ref{thm:EKS}.

\begin{proof}[Proof of Theorem \ref{thm:EKS}]
The first statement follows from the cases (1) and (2) of Corollary \ref{cor:precisethmA}. For the second statement, we first show that if $\deg(L)\geq 4g+2k+2$, then the ideal $I(\Sigma_k(C,L))$ is determinantally presented. There are two line bundles $A,B$ on $C$ such that $L=A \otimes B$ and $\deg(A) \geq \deg(B)\geq 2g+k+1$. When $g>0$ and $\deg(A)=\deg(B)=2g+k+1$, we may assume that $A \ncong B$ possibly replacing $A,B$ with $A\otimes \eta,B\otimes \eta^{-1}$ for a general line bundle $\eta$ of degree zero on $C$. Then we are done by  the cases (1) and (2) of Corollary \ref{cor:precisethmA}. Next, we show that if $\deg(L) \geq 3g+3k+4$, then the ideal $I(\Sigma_k(C,L))$ is determinantally presented. To this end, set $A := \sO_C(p_1+\cdots+p_a)$ for $a:=\deg(L)-(2g+k+1)$ general points $p_1, \ldots, p_a\in C$ and $B:=L\otimes A^{-1}$ so that $\deg(B) = 2g+k+1$. Note that $A$ is nonspecial and $(k+1)$-very ample since $\deg(A)=a \geq g+2k+3$. 
We have $h^0(C,B\otimes A^{-1}) = h^0(C,L(-2p_1-\cdots-2p_a))=0$, so the condition of Theorem \ref{thm:precisethmA} is satisfied. Thus $m^{k+2}_{A,B}$ is surjective, so we are done as the ideal $I(\Sigma_k(C,A\otimes B))$ is generated in degree $k+2$ by \cite[Theorem 1.2]{ENP}.
\end{proof} 

\begin{remark}
By Theorem \ref{thm:EKS}, we see that the multiplication map  (\ref{eq:multmapell}) for $\ell=2$, that is $H^0(C^{[k+2]}, N_{k+2,L})^{\otimes 2} \to H^0(C^{[k+2]}, N_{k+2,L}^{\otimes 2})$, is surjective when $L$ is a line bundle on $C$ with $\deg L \geq 2g+k+2$. While we skip a detailed discussion, the argument presented here can be employed to establish that  the map (\ref{eq:multmapell}) is surjective for every $\ell \geq 1$. In other words, one can prove that if $\deg L \geq 2g+k+2$, then the embedding $C^{[k+2]} \subseteq \nP (H^0(C^{[k+2]}, N_{k+2, L}))$ is projectively normal. This is an effective result of Theorem \ref{thm:projnormalityhilb} for $n=1$. A sharper result is shown by Sheridan \cite{Sheridan} for the case $k=0$. 
\end{remark}

\subsection{Higher-dimensional case} 
We give the second proof of Theorem  \ref{thm:SS} using a cohomological method. Assume that $X$ is a smooth projective variety of dimension $n$ and $n\leq 2$ or $k\leq 1$. Here we only prove the second statement of Theorem  \ref{thm:SS} that if $L$ is sufficiently ample, then there is a splitting $L=A \otimes B$ with sufficiently ample line bundles $A,B$ on $X$ such that the ideal $I(\Sigma_k(X, L))$ is generated by the $(k+2) \times (k+2)$-minors of $\operatorname{Cat}(A,B)$. As the ideal $I(\Sigma_k(X, L))$ is generated in degree $k+2$ by \cite[Theorem B]{CLPS25+}, we only have to prove that the multiplication map $m^{k+2}_{A,B}$ is surjective. 
If $n\leq 2$ or  $k=0$, then Proposition \ref{prop:cohvanishing} and Lemma \ref{lem:cohvanishing} show that there are sufficiently ample line bundles $A,B$ on $X$ with $L=A \otimes B$ such that $m^{k+2}_{A,B}$ is surjective. Unfortunately, this approach does not work when $n\geq 3$ and $k=1$. What is missing in this case is the cohomology vanishing conditions of Lemma \ref{lem:cohvanishing}. We do expect them to hold also in this case, but we do not have a proof at the moment. However, it turns out that we can improve Lemma \ref{lem:cohvanishing} replacing the vanishings on $X^{[k+2]}$ by the same vanishings  on $X^{[k+1]}$.

\begin{lemma}\label{lem:cohvanishingbetter}
Assume that $n\leq 2$ or $k\leq 1$.
If $A,B$ are two sufficiently ample line bundles on $X$ such that 
\[ H^i(X^{[k+1]},S^iE^{\vee}_{k+1,B}\otimes N_{k+1,A})=0 \qquad \text{ for } i>0, \]
then the map $m^{k+2}_{A,B}$ is surjective.
\end{lemma}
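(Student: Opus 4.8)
The plan is to reduce the surjectivity of $m^{k+2}_{A,B}$ on $X^{[k+2]}$ to a cohomological vanishing on the smaller Hilbert scheme $X^{[k+1]}$ by exploiting the nested Hilbert scheme $X^{[k+1,k+2]}$ together with the residual and forgetful morphisms $\res,\tau,\rho$. First I would observe that, since $n\leq 2$ or $k\leq 1$, the nested Hilbert scheme $X^{[k+1,k+2]}$ is smooth, so that \eqref{eq:directsummand} applies: $\sO_{X^{[k+2]}}$ is a direct summand of $\rho_*\sO_{X^{[k+1,k+2]}}(F_{k+1})$. As in Lemma \ref{lem:cohvanishing}, it suffices to prove the vanishings $H^i(X^{[k+2]}, G_i)=0$ for $i>0$, where $G_i=\wedge^{k+2+i}H^0(X,B)\otimes S^iE_{k+2,B}^{\vee}\otimes N_{k+2,A}$; equivalently, using the direct-summand property, it is enough to show that
\[
H^i\bigl(X^{[k+1,k+2]},\ \rho^*(S^iE_{k+2,B}^{\vee}\otimes N_{k+2,A})\otimes \sO(F_{k+1})\bigr)=0 \qquad \text{for all } i>0.
\]

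Next I would substitute the pullback formulas. From \eqref{eq:pullbackNT} one has $\rho^*N_{k+2,A}\cong(\res^*A\otimes\tau^*N_{k+1,A})(-F_{k+1})$, so the twist by $\sO(F_{k+1})$ cancels the $-F_{k+1}$ and we are left with $\res^*A\otimes\tau^*N_{k+1,A}\otimes\rho^*S^iE_{k+2,B}^{\vee}$. Dualizing \eqref{eq:seqpullbackE} gives a short exact sequence $0\to\tau^*E_{k+1,B}^{\vee}\to\rho^*E_{k+2,B}^{\vee}\to\res^*B^{-1}(F_{k+1})\to 0$, and taking symmetric powers produces a filtration of $\rho^*S^iE_{k+2,B}^{\vee}$ whose graded pieces are $\res^*B^{-j}(jF_{k+1})\otimes\tau^*S^{i-j}E_{k+1,B}^{\vee}$ for $0\leq j\leq i$. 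Plugging this in, it is enough to prove that
\[
H^i\bigl(X^{[k+1,k+2]},\ \res^*(A\otimes B^{-j})\otimes\tau^*(S^{i-j}E_{k+1,B}^{\vee}\otimes N_{k+1,A})\otimes\sO(jF_{k+1})\bigr)=0
\]
for all $i>0$ and $0\leq j\leq i$. Now the combined map $(\res,\tau)\colon X^{[k+1,k+2]}\to X\times X^{[k+1]}$ is the blow-up along the universal family $\mathcal{Z}_{k+1}$, with exceptional divisor $F_{k+1}$; since $\mathcal{Z}_{k+1}$ has codimension $n$ and $0\leq j\leq i\leq k+2$, a standard computation of the higher direct images of a blow-up along a regular embedding — combined with the projection formula — shows that $R^{>0}(\res,\tau)_*\bigl(\res^*(A\otimes B^{-j})\otimes\tau^*(\cdots)\otimes\sO(jF_{k+1})\bigr)$ is concentrated in a way that, after pushing down, leaves only the $H^i$ on $X\times X^{[k+1]}$ of $A\otimes B^{-j}\boxtimes(S^{i-j}E_{k+1,B}^{\vee}\otimes N_{k+1,A})$ together with possible contributions along $\mathcal{Z}_{k+1}\cong X^{[k,k+1]}$; the Künneth formula and the hypothesis that $A$ is sufficiently ample on $X$ (so $H^{>0}(X,A\otimes B^{-j})=0$) then reduce everything to the vanishing $H^i(X^{[k+1]},S^{i-j}E_{k+1,B}^{\vee}\otimes N_{k+1,A})=0$, which is implied by the assumed vanishing $H^i(X^{[k+1]},S^iE^{\vee}_{k+1,B}\otimes N_{k+1,A})=0$ for $i>0$ since sufficient ampleness lets us absorb the index shift (using $N_{k+1,A}\cong N_{k+1,\sO_X}\otimes h^*S_{k+1,A}$ and Lemma \ref{lem:suffamplelemma} (6)).

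The main obstacle I anticipate is the careful bookkeeping of the blow-up direct images: one needs to track precisely which graded pieces of the filtration contribute terms supported on the exceptional divisor $F_{k+1}$ (equivalently, on $\mathcal{Z}_{k+1}$ after pushforward), and check that those contributions also vanish — this will require either a Leray spectral sequence argument along $(\res,\tau)$ using $R^q(\res,\tau)_*\sO(jF_{k+1})$ for $0\leq j\leq n-1$, or a dévissage reducing the $\mathcal{Z}_{k+1}$-supported terms to analogous vanishings on $X^{[k,k+1]}$ that again follow from sufficient ampleness of $A$. The genuinely new input of the lemma is precisely this reduction from $X^{[k+2]}$ to $X^{[k+1]}$; once it is in place, Proposition \ref{prop:cohvanishing} supplies the required vanishing on $X^{[k+1]}$ when $k=1$ (where $k+1=2$), completing the second proof of Theorem \ref{thm:SS} in the remaining case $n\geq 3$, $k=1$.
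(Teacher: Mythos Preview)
Your approach has a genuine gap at the step where you claim that sufficient ampleness of $A$ gives $H^{>0}(X,A\otimes B^{-j})=0$. This is false: $B$ is \emph{also} sufficiently ample, so $A\otimes B^{-j}$ has no positivity whatsoever for $j\geq 1$ (take for instance $X$ an abelian variety and $A=B$, so that $H^1(X,A\otimes B^{-1})=H^1(X,\sO_X)\neq 0$). Already for $i=1$ and the graded piece $j=1$, your K\"unneth term $H^1(X,A\otimes B^{-1})\otimes H^0(X^{[k+1]},N_{k+1,A})$ does not vanish, and nothing in the hypothesis controls $H^b(X^{[k+1]},S^{i-j}E_{k+1,B}^{\vee}\otimes N_{k+1,A})$ for $b\neq i-j$. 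In effect you are attempting to prove the vanishing $H^i(X^{[k+2]},S^iE_{k+2,B}^{\vee}\otimes N_{k+2,A})=0$ itself, which is exactly the statement the paper says it \emph{cannot} establish for $n\geq 3$, $k=1$; the direct-summand trick only makes this harder, not easier.

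The paper's proof takes a genuinely different route: it never tries to kill the cohomology groups $H^i(X^{[k+2]},G_i)$. Instead it lifts the \emph{map} $\wedge^{k+2}H^0(X,B)\otimes N_{k+2,A}\to N_{k+2,A}\otimes N_{k+2,B}$ to $X^{[k+1,k+2]}$ via the direct-summand trick, and then pushes down along $(\res,\tau)$ to obtain a single map
\[
\Phi\colon \wedge^{k+2}H^0(X,B)\otimes (A\boxtimes N_{k+1,A}) \longrightarrow (L\boxtimes A_{k+1,L})\otimes \mathcal{I}_{\mathcal{Z}_{k+1}}
\]
on $X\times X^{[k+1]}$, using only $(\res,\tau)_*\sO(-F_{k+1})\cong \mathcal{I}_{\mathcal{Z}_{k+1}}$ and no filtration at all. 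The crucial point is that on the $X$-factor only $A$ and $L=A\otimes B$ appear, never $A\otimes B^{-j}$. The map $\Phi$ is then factored as $\psi\circ(\id_A\boxtimes\varphi)$, where $\varphi\colon \wedge^{k+2}H^0(X,B)\otimes N_{k+1,A}\to M_{k+1,B}\otimes A_{k+1,L}$ lives entirely on $X^{[k+1]}$ and is surjective on global sections precisely by the hypothesis (via the exact sequence \eqref{eq:dualses} on $X^{[k+1]}$), while $\psi$ is a multiplication map handled by Lemma~\ref{lem:suffamplelemma}~(5). The moral is that one should transport the surjectivity problem, not the vanishing problem, to the smaller Hilbert scheme.
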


\begin{proof}
Recall that $H^0(X, N_{k+2, B}) \cong \wedge^{k+2} H^0(X, B)$, and observe that $m^{k+2}_{A,B}$ is the global section map of 
\[\wedge^{k+2} H^0(X, B) \otimes N_{k+2,A} \longrightarrow N_{k+2,B} \otimes N_{k+2,A}. \]
Since $\sO_{X^{[k+2]}}$ is a direct summand of $\rho_*\sO_{X^{[k+1,k+2]}}(F_{k+1})$ by \eqref{eq:directsummand}, it is sufficient to prove that the map of sheaves
\[ \wedge^{k+2}H^0(X,B) \otimes \rho^*N_{k+2,A}(F_{k+1}) \longrightarrow (\rho^*N_{k+2,B}\otimes \rho^*N_{k+2,A})(F_{k+1}) \]
on $X^{[k+1,k+2]}$ is surjective on global sections. As $\rho^*N_{k+2,A}(F_{k+1}) \cong \res^*A \otimes \tau^*N_{k+1,A}$ by  \eqref{eq:pullbackNT}, this map is the same as
\[ \wedge^{k+2}H^0(X,B)\otimes \res^*A  \otimes \tau^*N_{k+1,A}\longrightarrow ( \res^*L \otimes \tau^*A_{k+1,L} )(-F_{k+1}). \]
Note that $(\res,\tau)_*\sO(-F_{k+1}) \cong \mathcal{I}_{\mathcal{Z}_{k+1}}$ since $(\res,\tau)$ is the blow-up of $X \times X^{[k+1]}$ along an irreducible variety $\mathcal{Z}_{k+1}$ with exceptional divisor $F_{k+1}$. 
Thus the previous map is surjective on global sections if and only if the map 
$$
\Phi\colon \wedge^{k+2}H^0(X,B)\otimes (A \boxtimes N_{k+1,A}) \longrightarrow (L \boxtimes A_{k+1,L}) \otimes \mathcal{I}_{\mathcal{Z}_{k+1}}
$$
is surjective on global section. The map $\Phi$ fits into a commutative diagram
$$
 \xymatrix{
  \wedge^{k+2}H^0(X,B)\otimes (A \boxtimes N_{k+1,A}) \ar@{=}[d] \ar[r]^-{\id_A \boxtimes \varphi} & A \boxtimes (M_{k+1,B} \otimes A_{k+1,L}) \ar[d]^-{\psi}\\
 \wedge^{k+2}H^0(X,B)\otimes (A \boxtimes N_{k+1,A}) \ar[r]_-{\Phi} & (L \boxtimes A_{k+1,L}) \otimes \mathcal{I}_{\mathcal{Z}_{k+1}}.
 }
$$
By Lemma \ref{lem:suffamplelemma} (2), we may assume that $B$ is $k$-very ample, so the kernel bundle $M_{k+1,B}$ is well-defined. Our task is then to show that the maps $\varphi$ and $\psi$ are surjective on global sections. 
Here, $\varphi$ is the last nonzero map of the exact sequence
$$
\cdots \longrightarrow F_2 \longrightarrow F_1 \longrightarrow \wedge^{k+2}H^0(X,B) \otimes N_{k+1,A}\overset{\varphi}{\longrightarrow} M_{k+1,B}\otimes A_{k+1,L} \longrightarrow 0
$$
induced from the short exact sequence
\[  0 \longrightarrow M_{k+1,B} \longrightarrow H^0(X,B) \otimes \sO_{X^{[k+1]}} \longrightarrow E_{k+1,B} \longrightarrow 0 \]
via Lemma \ref{lem:seqbundles}, where $F_i = \wedge^{k+2+i}H^0(X,B)\otimes S^iE_{k+1,B}^{\vee}\otimes N_{k+1,A}$ for $i\geq 1$. Thus $\varphi$ is surjective on global sections as soon as $H^i(X^{[k+1]},S^iE^{\vee}_{k+1,B}\otimes N_{k+1,A}) = 0$ for $i>0$, which is precisely our hypothesis. 
Next, if we denote by $\pr_{[X^{[k+1]}} \colon X \times X^{[k+1]} \to X^{[k+1]}$ the projection map, then 
$$
M_{k+1,B} \cong \pr_{X^{[k+1]},*} ((B \boxtimes \sO_{X^{[k+1]}}) \otimes \mathcal{I}_{\mathcal{Z}_{k+1}}).
$$ 
Thus the map $\psi$ is obtained from the map
$$
\sO_X \boxtimes M_{k+1,B} \longrightarrow (B \boxtimes \sO_{X^{[k+1]}}) \otimes \mathcal{I}_{\mathcal{Z}_{k+1}} 
$$
after tensoring with $A \boxtimes A_{k+1,L}$. Note that the map induced by $\psi$ on global sections can be identified with the multiplication map
	\[ H^0(X \times X^{[k+1]}, (B \boxtimes A_{k+1,L})\otimes \mathcal{I}_{\mathcal{Z}_{k+1}} ) \otimes H^0(X,A) \longrightarrow H^0(X \times X^{[k+1]},(L \boxtimes A_{k+1,L}) \otimes \mathcal{I}_{\mathcal{Z}_{k+1}}). \]
If we set $\mathcal{F}:=(\id_X \times h_{k+1})_*((\sO_X \boxtimes \sO_{X^{[k+1]}}(-2\delta_{k+1}))\otimes \mathcal{I}_{\mathcal{Z}_{k+1}})$, then this map is the same as the multiplication map
$$
	H^0(X \times X^{(k+1)}, \mathcal{F}\otimes (B \boxtimes S_{k+1,L})) \otimes H^0(X,A) \longrightarrow H^0(X \times X^{(k+1)},\mathcal{F}\otimes (L \boxtimes S_{k+1,L})).
$$
Since $S_{k+1,L}$ is sufficiently ample on $X^{(k+1)}$, this map is surjective by  Lemma \ref{lem:suffamplelemma} (5). Hence $\psi$ is surjective on global sections.
\end{proof}

Now, we can give another proof of Theorem \ref{thm:SS}.

\begin{proof}[Second Proof of Theorem \ref{thm:SS}]
As mentioned before, we only prove the second statement. Since $L$ is sufficiently ample, we may find sufficiently ample line bundles $A,B$ on $X$ with $L=A \otimes B$ such that $H^i(X^{[k+1]},S^iE_{k+1,B}^{\vee}\otimes N_{k+1,A})=0$ for $i>0$ thanks to Proposition \ref{prop:cohvanishing}. Then Lemma \ref{lem:cohvanishingbetter} shows that the map $m^{k+2}_{A,B}$ is surjective, and hence, the ideal $I(\Sigma_k(X,L))$ is determinantally presented.
\end{proof}

\section{Rank three quadratic equations  for projective schemes}\label{sec:quadrics43}

\noindent We now turn to the case of an arbitrary projective scheme $X$. Our aim is to prove Theorem \ref{thm:HLMP} from the introduction. Along the way, we recover main results of \cite{HLMP2021} and \cite{SS2011}. The key will be given by the surjectivity of the two multiplication maps
\begin{align*}
m^2_{A,B}&\colon \wedge^2 H^0(X,A) \otimes \wedge^2 H^0(X,B)  \longrightarrow I(X,L)_2 \\
s^2_{A,B}&\colon S^2H^0(X,A) \otimes \wedge^2 H^0(X,B) \longrightarrow \wedge^2 H^0(X,L)
\end{align*}
where $A,B$ are sufficiently ample line bundles on $X$ and $L:=A \otimes B$. As $L$ is also sufficiently ample, we may assume that $L$ is very ample by Lemma \ref{lem:suffamplelemma} (2).
The following technical result is inspired by Lemma \ref{lem:cohvanishingbetter}, and indeed we could prove it via Hilbert schemes of points if $X$ is smooth. However, since now $X$ is an arbitrary projective scheme, the Hilbert scheme of points on $X$ may be arbitrarily bad. Therefore, we take a more direct approach.

\begin{lemma}\label{lem:cohvanishingexplicit}
Let $X$ be a projective scheme, $A,B$ be two globally generated line bundles on $X$, and $L:=A\otimes B$. Assume that the following hold:
	\begin{enumerate} 
	\item $H^1(X,A) =H^1(X, L)=0$.
	\item $H^i(X,M_{A}\otimes B) = H^i(X,M_{B}\otimes A) = 0$ for $i=1,2$.
	\item $H^1(X,\wedge^2 M_B \otimes A) = 0 $.
	\item  $H^1(X,M_A^{\otimes 2}\otimes B) = H^1(X,M_A\otimes M_B\otimes L)=0$.
	\end{enumerate}
	  Then both maps $m^2_{A,B}$ and $s^2_{A,B}$ are surjective.
\end{lemma}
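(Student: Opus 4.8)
The plan is to realize both multiplication maps as the global-section maps of surjective morphisms of sheaves on $X$, obtained by a Koszul-type (Eagon–Northcott) analysis of the kernel bundles $M_A$ and $M_B$, and then to feed the listed cohomology vanishings into the long exact sequences. Concretely, since $A$ is globally generated we have $0\to M_A\to H^0(X,A)\otimes\sO_X\to A\to 0$, and taking second exterior powers (Lemma 2.1, sequence \eqref{eq:standardses}) gives a two-step resolution
\[
\wedge^2 H^0(X,A)\otimes \wedge^2 M_A^{\,?}\ \to\ H^0(X,A)\otimes M_A\otimes A^{?}\ \to\ \wedge^2 H^0(X,A)\otimes\sO_X\ \to\ \wedge^2 A=0,
\]
but more usefully I would run the exterior-power sequence for $M_A$ itself after twisting by $B$ to present $\wedge^2 M_A\otimes B$, and separately the sequence presenting $M_B\otimes A$. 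The point is that $\wedge^2 H^0(X,A)$ maps onto $H^0(X,\wedge^2 M_A\otimes A)$-type terms, and chasing these identifies $I(X,L)_2$ with the cokernel of a map whose surjectivity on global sections is governed by $H^1$ of the bundles in hypotheses (2)–(4).

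First I would set up $s^2_{A,B}$. Consider the exact sequence $0\to M_B\to H^0(X,B)\otimes\sO_X\to B\to 0$; tensoring by $A$ and taking the induced sequence on $\wedge^2$ via Lemma 2.1 yields
\[
0\ \to\ \wedge^2 M_B\otimes A\ \to\ M_B\otimes H^0(X,B)\otimes A\ \to\ \wedge^2 H^0(X,B)\otimes A\ \to\ \wedge^2 L\cdot(\text{via }B)\ \to\ 0,
\]
or rather, the standard resolution of $\wedge^2 L$ restricted appropriately; taking $H^0$ and using $H^1(X,\wedge^2 M_B\otimes A)=0$ from (3) together with $H^1(X,M_B\otimes A)=0$ from (2) shows that $H^0(X,B)\otimes H^0(X,A)$ — and after symmetrizing, $S^2H^0(X,A)\otimes\wedge^2 H^0(X,B)$ — surjects onto $\wedge^2 H^0(X,L)$, provided $H^1(X,L)=0$ from (1) is used to control the final term. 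The symmetrization (replacing $H^0(X,A)^{\otimes 2}$ by $S^2H^0(X,A)$) is harmless because the relevant map already factors through the symmetric part after composing with multiplication $H^0(X,A)^{\otimes 2}\to H^0(X,A^{\otimes 2})$, which is not quite what we want — so more carefully I would track the two-variable version $H^0(X,A)\otimes\wedge^2 H^0(X,B)\to \wedge^2 H^0(X,L)$ first and then note $s^2_{A,B}$ surjective follows since $S^2H^0(X,A)\to H^0(X,A)\otimes(\text{second copy})$ hits a spanning set.

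Next, for $m^2_{A,B}$, I would use that $I(X,L)_2=\ker\big(S^2H^0(X,L)\to H^0(X,L^{\otimes 2})\big)$ and resolve this via the kernel bundle $M_L$: there is an exact sequence $0\to \wedge^2 M_L\to \wedge^2 H^0(X,L)\otimes\sO_X\to M_L\otimes L\to 0$ (combine the definition of $M_L$ with Lemma 2.1), whence $I(X,L)_2\cong H^0(X,\wedge^2 M_L)$ once $H^1(X,M_L\otimes L)$-type terms vanish — which will follow from (4) after comparing $M_L$ with an extension built from $M_A$ and $M_B$. Indeed the multiplication $A\otimes B\to L$ gives a surjection $H^0(X,A)\otimes H^0(X,B)\to H^0(X,L)$ (using (1)), inducing a surjection of kernel bundles fitting in $0\to(\text{stuff in }M_A,M_B)\to M_A\boxtimes\text{-type}\to M_L\to 0$; taking $\wedge^2$ and applying the vanishings $H^1(X,M_A^{\otimes 2}\otimes B)=H^1(X,M_A\otimes M_B\otimes L)=0$ from (4) together with (2) lets me lift global sections of $\wedge^2 M_L$ to $\wedge^2 H^0(X,A)\otimes\wedge^2 H^0(X,B)$, which is exactly surjectivity of $m^2_{A,B}$.

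\textbf{Main obstacle.} The delicate point is the bookkeeping in the second half: expressing $M_L$ (or $\wedge^2 M_L$) in terms of $M_A$, $M_B$, $A$, $B$ and arranging the filtration so that precisely the four groups in hypothesis (4) — and no others — appear as obstruction spaces. One must be careful that the map $H^0(X,A)\otimes H^0(X,B)\to H^0(X,L)$ is surjective (needs $H^1(X,M_A\otimes B)=0$ from (2), which is why that vanishing is listed with $i=1$ as well as $i=2$) and that its kernel bundle sits in a clean two-step complex whose $\wedge^2$ is resolved by terms of the form $M_A^{\otimes 2}\otimes B$ and $M_A\otimes M_B\otimes L$. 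Once that filtration is pinned down, everything reduces mechanically to the listed vanishings plus $H^1(X,A)=H^1(X,L)=0$; I do not expect any genuinely new geometric input beyond Lemma 2.1 and the definition of kernel bundles. I would also remark at the end that the hypotheses are satisfied for $A,B$ sufficiently ample by Lemma 2.7 (parts (7) and the arguments behind (8)), so that Lemma 8.2 is non-vacuous and immediately applicable.
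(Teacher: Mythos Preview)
Your plan has a genuine gap in the treatment of $m^2_{A,B}$. The identification $I(X,L)_2\cong H^0(X,\wedge^2 M_L)$ is false: from the exact sequence
\[
0 \longrightarrow \wedge^2 M_L \longrightarrow \wedge^2 H^0(X,L)\otimes \sO_X \longrightarrow M_L\otimes L \longrightarrow 0
\]
the map $\wedge^2 H^0(X,L)\to H^0(X,M_L\otimes L)\subseteq H^0(X,L)^{\otimes 2}$ is just the inclusion of the antisymmetric part, hence injective, so $H^0(X,\wedge^2 M_L)=0$ always. What is true is that $H^0(X,M_L\otimes L)=I(X,L)_2\oplus \wedge^2 H^0(X,L)$ under the $\mathfrak{S}_2$-splitting of $H^0(X,L)^{\otimes 2}$, so the correct target for both maps at once is $H^0(X,M_L\otimes L)$, not $H^0(X,\wedge^2 M_L)$. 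Your filtration idea for $M_L$ in terms of $M_A,M_B$ can be made to work, but the bookkeeping is heavier than you suggest and does not reduce cleanly to exactly the four groups in hypothesis (4). Your handling of $s^2_{A,B}$ is also incomplete, as you yourself flag: the passage from the ``two-variable'' surjectivity to the symmetrized statement is not automatic.

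The paper's argument avoids both problems by treating $m^2_{A,B}$ and $s^2_{A,B}$ simultaneously. One considers the single map
\[
\Phi\colon \wedge^2 H^0(X,B)\otimes H^0(X,A)^{\otimes 2}\longrightarrow H^0(X,M_L\otimes L),
\]
which respects the decompositions $H^0(X,A)^{\otimes 2}=S^2H^0(X,A)\oplus \wedge^2 H^0(X,A)$ and $H^0(X,M_L\otimes L)=\wedge^2 H^0(X,L)\oplus I(X,L)_2$, so it suffices to show $\Phi$ is surjective. This is done by factoring $\Phi$ through $H^0(X,M_B\otimes L)\otimes H^0(X,A)$: condition (3) gives surjectivity of the first factor, and for the second one uses the identification $H^0(X,M_B\otimes L)=H^0(X,M_L\otimes B)$ (both equal $H^0(X^2,(B\boxtimes L)\otimes\mathcal{I}_{\Delta})$, Lemma~\ref{lem:M_L}) to reduce to $H^1(X,M_A\otimes M_L\otimes B)=0$. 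That last vanishing is then proved by passing to $X^2$ via Lemma~\ref{lem:M_L} and filtering with the sequence $0\to M_B\otimes A\to H^0(B)\otimes A\to L\to 0$ boxed against $M_A\otimes B$; this is where conditions (1), (2), (4) enter. The $X^2$ trick is the step your outline is missing, and it is what makes the obstruction groups line up precisely with the stated hypotheses.
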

\begin{proof}
	Consider first the multiplication map
	$$
	\Phi\colon \wedge^2 H^0(X,B) \otimes H^0(X,A)^{\otimes 2} \longrightarrow H^0(X,M_L\otimes L) \subseteq H^0(X,L)^{\otimes 2}.
	$$
	As $\operatorname{char}(\kk)\ne 2$, we have $H^0(X,A)^{\otimes 2} = \wedge^2 H^0(X,A) \oplus S^2H^0(X,A)$ and $H^0(X,M_L\otimes L) = I(X,L)_2 \oplus \wedge^2 H^0(X,L)$. Furthermore, the map $\Phi$ respects this decomposition. Hence we only need to prove that $\Phi$ is surjective onto $H^0(X,M_L\otimes L)$. We may factor $\Phi$ as
	\[ \wedge^2 H^0(X,B) \otimes H^0(X,A)^{\otimes 2} \xrightarrow{\varphi\otimes \id_{H^0(A)}} H^0(X,M_B\otimes L)  \otimes H^0(X,A) \xrightarrow{~\psi~}H^0(X,M_L\otimes L).  \]
 The condition (3) shows that $\varphi\otimes \id_{H^0(A)}$ is surjective. It suffices to check that $\psi$ is surjective. Since $H^0(X,M_B\otimes L) = H^0(X,M_L\otimes B)$, the surjectivity of $\psi$ follows from $H^1(X,M_A\otimes M_L\otimes B)=0$.  
By the condition (1) and Lemma \ref{lem:M_L}, it is enough to show that $H^1(X^2,((M_A\otimes B)\boxtimes L )\otimes \mathcal{I}_{\Delta_{1,1}})=0$. Consider the short exact sequence 
	\[
	0 \longrightarrow (M_A \otimes B) \boxtimes (M_B \otimes A)  \longrightarrow (M_A \otimes B) \boxtimes (A \otimes H^0(B))  \longrightarrow  ( (M_A \otimes B) \boxtimes L)  \longrightarrow  0.
	\]	
After tensoring by $\mathcal{I}_{\Delta_{1,1}}$, we see that the desired cohomology vanishing follows from
	\[
	H^1(X^2, ((M_A \otimes B) \boxtimes A) \otimes \mathcal{I}_{\Delta_{1,1}})=0~~\text{ and }~~H^2(X^2, ((M_A \otimes B) \boxtimes (M_B \otimes A)) \otimes \mathcal{I}_{\Delta_{1,1}})=0.
	\]
By Lemma \ref{lem:M_L}, the first vanishing  holds by the conditions (1) and (4). Consider the short exact sequence $0 \to \mathcal{I}_{\Delta_{1,1}} \to \sO_{X^2} \to \sO_X \to 0$. The second vanishing can be deduced from 
$$
H^1(X, M_A \otimes M_B \otimes L) = 0~~\text{ and }~~H^2(X^2, (M_A \otimes B) \boxtimes (M_B \otimes A))=0,
$$
which hold by  the conditions (2) and (4). 
\end{proof}

We can now give alternative proofs of \cite[Theorem 1.1 and Theorem 1.3]{SS2011}.

\begin{corollary}\label{cor:cohvanishingexplicit}
$(1)$ Let $X$ be a projective scheme, and $A,B$ be sufficiently ample line bundles on $X$.Then both maps $m^2_{A,B}$ and $s^2_{A,B}$ are surjective. In particular, the ideal $I(X,L)$ is generated by the $2\times 2$ minors of $\operatorname{Cat}(A,B)$.\\[3pt]
$(2)$ Let $X$ be a smooth projective variety of dimension $n$, and $A := \omega_X\otimes H^{\otimes j_1}\otimes M_1, B := \omega_X\otimes H^{\otimes j_2}\otimes M_2$, where $H$ is a very ample line bundle and $M_1,M_2$ are nef line bundles on $X$. If $j_1,j_2\geq n+2$ or $j_1,j_2\geq n+1$ and $(X,H)\ne (\nP^n,\sO_{\nP^n}(1))$, then both maps $m^2_{A,B}$ and $s^2_{A,B}$ are surjective. In particular, the ideal $I(X,L)$ is generated by the $2\times 2$ minors of $\operatorname{Cat}(A,B)$, where $L:=A \otimes B$.
\end{corollary}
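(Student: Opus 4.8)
The plan is to derive both parts from Lemma~\ref{lem:cohvanishingexplicit}. In each case I would exhibit a pair $A,B$ --- sufficiently ample in case (1), of the prescribed adjoint type in case (2) --- satisfying the four cohomological hypotheses (1)--(4) of that lemma, which at once yields the surjectivity of $m^2_{A,B}$ and $s^2_{A,B}$. The image of $m^2_{A,B}$ is by construction the linear span of the $2\times 2$ minors of $\operatorname{Cat}(A,B)$, and $I(X,L)$ is generated by quadrics (Lemma~\ref{lem:suffamplelemma}~(8) in case (1); by the regularity estimates described below in case (2)), so surjectivity of $m^2_{A,B}$ forces $I(X,L)$ to be generated by these minors. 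Hence everything reduces to verifying (1)--(4).

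\textbf{Case (1).} This is routine. After replacing $A,B$ by sufficiently ample line bundles so that they are globally generated and $M_A,M_B$ are defined (Lemma~\ref{lem:suffamplelemma}~(2)), condition (1) is Fujita--Serre vanishing, while conditions (2), (3) and (4) are instances of Lemma~\ref{lem:suffamplelemma}~(7): in (2) the vanishing for $i=1,2$ is subsumed by that for all $i>0$; in (3) one uses that $\wedge^2 M_B$ is a direct summand of $M_B^{\otimes 2}$ since $\operatorname{char}(\kk)\neq 2$; in (4) the two groups are $H^1$ of $M_A\otimes M_A\otimes B$ and of $M_A\otimes M_B\otimes L$, again covered by Lemma~\ref{lem:suffamplelemma}~(7) as $A,B,L$ are all sufficiently ample.

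\textbf{Case (2).} Here the same four conditions must be checked with the explicit numerical bounds, and the tool I would use is Lemma~\ref{lem:M_L}, which rewrites each group appearing in (2)--(4) as cohomology on $X^2$ or $X^3$ of an external product of the adjoint line bundles $A,B,L$ twisted by a product of ideal sheaves of pairwise diagonals. Peeling the diagonals off one at a time through the sequences $0\to\mathcal{I}_{\Delta_{i,j}}\to\sO\to\sO_{\Delta_{i,j}}\to 0$ and using Künneth, the problem splits into two types of input. The first is vanishing of the higher cohomology on $X$ of the adjoint line bundles that occur --- $A$, $B$, $L=A\otimes B$, $A^{\otimes 2}\otimes B$, $A\otimes B\otimes L$, and their kin --- which holds by Kodaira vanishing, since the hypothesis $j_1,j_2\geq n+1$ makes each of them $\omega_X$ tensored with an ample line bundle (one uses that $\omega_X\otimes H^{\otimes(n+1)}$ is nef, and is ample precisely when $(X,H)\neq(\nP^n,\sO_{\nP^n}(1))$). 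The second is surjectivity of multiplication maps of the shape $H^0(X,F)\otimes H^0(X,M_G\otimes F')\to H^0(X,M_G\otimes F\otimes F')$; these I would obtain from Castelnuovo--Mumford regularity with respect to $H$, the point being that $j_1,j_2\geq n+2$ --- or $j_1,j_2\geq n+1$ together with $(X,H)\neq(\nP^n,\sO_{\nP^n}(1))$ --- makes $A$, $B$, and the relevant twists, $0$-regular with respect to $H$, whence the standard regularity estimates give the desired surjectivities (and the quadric generation of $I(X,L)$). The exceptional pair $(\nP^n,\sO_{\nP^n}(1))$ must be excluded because there $\omega_X\otimes H^{\otimes(n+1)}$ is trivial, so for $j_1=n+1$ the bundle $A$ can be trivial, $\wedge^2 H^0(X,A)=0$, and $m^2_{A,B}$ cannot be surjective.

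\textbf{Expected main obstacle.} Case (1) is formal. The genuine work is the effective case (2), and within it the bookkeeping for conditions (3) and (4): these involve the second kernel-bundle powers $\wedge^2 M_B$, $M_A^{\otimes 2}$ and $M_A\otimes M_B$, hence triple products $X^3$ carrying two diagonals, and one must verify that every multiplication-map surjectivity produced by the reduction is in fact guaranteed by the regularity coming from $j_1,j_2\geq n+2$ (respectively $\geq n+1$ with the $\nP^n$ exception). Arranging for the numerical thresholds to come out exactly as stated, rather than a step or two larger, is the delicate part.
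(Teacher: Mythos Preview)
Your proposal is correct and for part~(1) matches the paper's proof exactly: verify the hypotheses of Lemma~\ref{lem:cohvanishingexplicit} via Fujita--Serre vanishing and Lemma~\ref{lem:suffamplelemma}~(7), then invoke Lemma~\ref{lem:suffamplelemma}~(8) for quadric generation.

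For part~(2) the strategy agrees but the execution differs in presentation. The paper does not carry out the diagonal-peeling and regularity argument you outline; instead it simply cites \cite[Theorem~2.1 and Proposition~3.1]{EL1993} for all the vanishings except $H^1(X,M_A\otimes M_B\otimes L)=0$, and for that last one either refers to the methods of \cite{EL1993} or (in a footnote) passes to $X^3$, blows up the union $\Delta_{1,3}\cup\Delta_{2,3}$, and applies Kawamata--Viehweg vanishing directly. Your approach via Lemma~\ref{lem:M_L}, successive short exact sequences for the diagonals, and Castelnuovo--Mumford regularity is essentially the internal machinery of \cite{EL1993} spelled out, so it would certainly succeed; it is just more labor than the paper's black-box citation. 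The blow-up argument in the footnote is a slicker alternative for the triple-kernel term that avoids the multiplication-map bookkeeping you flag as the main obstacle. Your remark explaining why the pair $(\nP^n,\sO_{\nP^n}(1))$ must be excluded is a nice addition not made explicit in the paper.
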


\begin{proof}
$(1)$ As $A,B$ are sufficiently ample, the conditions of Lemma \ref{lem:cohvanishingexplicit} holds by Fujita--Serre vanishing and Lemma  \ref{lem:suffamplelemma} (7).  This proves that the maps $m^2_{A,B},s^2_{A,B}$ are surjective. To conclude, it is enough to show that the ideal $I(X,L)$ is generated in degree two, but this is true if $L=A\otimes B$ is sufficiently ample because of Lemma \ref{lem:suffamplelemma}(8).

\medskip

\noindent $(2)$ We check the conditions of Lemma \ref{lem:cohvanishingexplicit}. The condition (1) follows from Kodaira vanishing, while all other vanishings apart from $H^1(X,M_A\otimes M_B\otimes L)=0$ follow from \cite[Theorem 2.1 and Proposition 3.1]{EL1993}. The remaining one can be proven as in \cite{EL1993}.\footnote{Alternatively, one can prove $H^1(X,M_A\otimes M_B\otimes L)=0$ as follows. By Lemma \ref{lem:M_L}, it is enough to show that 
$H^1(X^3, (A \boxtimes B \boxtimes L) \otimes \mathcal{I}_{\Delta_{1,3}} \otimes \mathcal{I}_{\Delta_{2,3}})=0$. By \cite[Lemma 1.2]{ELY}, $ \mathcal{I}_{\Delta_{1,3}} \otimes \mathcal{I}_{\Delta_{2,3}} =  \mathcal{I}_{\Delta_{1,3}} \cdot \mathcal{I}_{\Delta_{2,3}}=\mathcal{I}_{\Delta_{1,3} \cup \Delta_{2,3}}$. Let $b \colon Y \to X^3$ be the blow-up along  $\Delta_{1,3} \cup \Delta_{2,3}$ with exceptional divisor $E$. We need to check that
$H^1(Y, b^*(A \boxtimes B \boxtimes L)(-E))=0$. Note that $Y$ is smooth and $b^*(H \boxtimes H \boxtimes H^{\otimes 2})(-E)$ is globally generated. As $\omega_Y = b^*(\omega_X^{\boxtimes 3})(-(n-1)E)$, the desired cohomology vanishing follows from Kawamata--Viehweg vanishing theorem.}
\end{proof}

From now on, we focus on rank three quadratic equations for projective schemes. 
The following is a special case of \cite[Theorem 1.1]{HLMP2021}, but we give a quick proof using representation theory, which works in positive characteristic different from 2 and 3. 

\begin{proposition}\label{prop:rank3veronese}
The ideal  $I(\nP^n,\sO_{\nP^n}(2))$ is generated by quadrics of rank $3$. 
\end{proposition}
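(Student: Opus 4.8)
The plan is to run a short representation-theoretic argument based on the irreducibility of the Schur module cut out by the Veronese. Write $V := H^0(\nP^n, \sO_{\nP^n}(1))$, so the second Veronese realises $\nP^n = \nP(V)$ inside $\nP(S^2V)$ and the homogeneous ideal $I := I(\nP^n, \sO_{\nP^n}(2))$ has degree-two component $I_2 = \ker\big(S^2(S^2V) \to S^4V\big)$, the kernel of the multiplication map. First I would invoke two classical facts: that $I$ is generated in degree two (the Veronese is ideal-theoretically cut out by quadrics), and that in characteristic zero the plethysm decomposition $S^2(S^2V) \cong S^4V \oplus \mathbb{S}_{(2,2)}V$ identifies $I_2$ with the irreducible $\GL(V)$-module $\mathbb{S}_{(2,2)}V$. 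When $n = 0$ this module is zero and there is nothing to prove, so I would assume $n \ge 1$ henceforth.

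The key point is that $\GL(V)$ acts on $S^2(S^2V)$, the space of quadratic forms on $\nP(S^2V)$, through invertible linear substitutions of the coordinates, and therefore preserves the rank of a quadric; since the Veronese variety is $\GL(V)$-invariant, so is $I_2$, and hence the set $R := \{\, Q \in I_2 : \rank Q \le 3 \,\}$ is $\GL(V)$-stable. Its $\kk$-linear span is then a $\GL(V)$-submodule of the irreducible module $I_2$, so it is either $0$ or all of $I_2$. To exclude the first possibility I would exhibit one explicit nonzero member of $R$: fixing two independent linear forms $x_0, x_1$ on $\nP^n$ and writing $z_{ij}$ for the coordinate on $\nP(S^2V)$ dual to $x_ix_j$, the minor $q := z_{00}z_{11} - z_{01}^2$ lies in $I_2$, and since it involves only the three variables $z_{00}, z_{11}, z_{01}$ with associated symmetric matrix of nonzero determinant, it has rank exactly three. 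Hence the span of $R$ is all of $I_2$.

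It then remains to sharpen the conclusion from rank at most three to rank exactly three. Any nonzero quadric of rank at most two over an algebraically closed field is a product of two linear forms, so its zero locus lies in a union of two hyperplanes, which cannot contain the irreducible, linearly nondegenerate Veronese variety; thus every nonzero element of $R$ has rank precisely three, and so $I_2$ is spanned by quadrics of rank three. Since $I$ is generated by $I_2$, the same holds for $I$. I do not anticipate a genuine obstacle here: the argument is entirely formal once the plethysm and the irreducibility of $\mathbb{S}_{(2,2)}V$ are available. The only points requiring care are keeping the conventions for $\nP(V)$ and $S^2V$ (as opposed to its dual) consistent and, if one wants the characteristic-free statement mentioned in the introduction, checking that the decomposition of $S^2(S^2V)$ — and hence the proof — survives in characteristic different from $2$ and $3$.
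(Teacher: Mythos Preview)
Your proposal is correct and follows essentially the same approach as the paper: both identify $I_2$ with the irreducible $\GL(V)$-module $\mathbb{S}_{(2,2)}V$ via the plethysm $S^2(S^2V)\cong S^4V\oplus \mathbb{S}_{(2,2)}V$, exhibit the explicit rank-three minor $z_{00}z_{11}-z_{01}^2$, and conclude by irreducibility that its $\GL(V)$-orbit spans $I_2$. The paper packages the family of such quadrics as a $\GL(V)$-equivariant map $\operatorname{Gr}(2,V)\to \nP(I_2^{\vee})$, but this is only a cosmetic difference from your direct argument via the $\GL(V)$-stable set of rank $\le 3$ quadrics.
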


\begin{proof}
It is well known that the  ideal $I(\nP^n,\sO_{\nP^n}(2))$ is generated by quadrics. It is enough to show that $I(\nP^n,\sO_{\nP^n(2)})_2$ is spanned by quadrics of rank $3$. We will use the language of representation theory. Let $\nP^n = \nP(V)$ for a vector space $V$ of dimension $n+1$. Then there is an exact sequence
	\[0 \longrightarrow  I(\nP(V),\sO_{\nP(V)}(2))_2 \longrightarrow S^2(S^2V) \longrightarrow S^4V \longrightarrow 0  \] 
	of $\operatorname{GL}(V)$-representations. We have an irreducible representation decomposition $S^2(S^2V) \cong S^{(2,2)}V \oplus S^{4}V$ as representations of $\operatorname{GL}(V)$. Thus $I(\nP(V),\sO_{\nP(V)}(2))_2 \cong S^{(2,2)}V$.  Now, we recall the classical relation between quadrics of rank $3$ and  two-dimensional subspace $W\subseteq V$. If $\sigma_0,\sigma_1$ is a basis of $W$, define $x_0:=\sigma_0^2,x_1 := \sigma_1\sigma_2,x_2
:=\sigma_2^2 \in H^0(\nP(V),\sO_{\nP(V)}(2))$. It is straightforward to see that
	\[ q := \det\begin{pmatrix} \sigma_0^2 & \sigma_{0}\sigma_1 \\ \sigma_0\sigma_1 & \sigma_1^2 \end{pmatrix} = x_0x_2-x_1^2 \]
	is a quadric of rank exactly three in $I(\nP(V),\sO_{\nP(V)}(2))_2$. Up to a scalar, this quadric is independent of the basis $\sigma_0,\sigma_1$, so that this defines a map
	\[ q\colon \operatorname{Gr}(2,V) \longrightarrow \nP(I(\nP(V),\sO_{\nP(V)}(2))_2^{\vee}) \] 
	whose image is contained in the space of quadrics of rank at most three. All constructions are $\operatorname{GL}(V)$-equivariant, so the image of this map must span a nonempty $\operatorname{GL}(V)$-invariant subspace of the image. Since $I(\nP(V),\sO_{\nP(V)}(2))_2$ is an irreducible representation of $\operatorname{GL}(V)$, the only such subspace is the whole space.
\end{proof} 

\begin{remark}
	It turns out that  the map $q \colon \operatorname{Gr}(2,V) \rightarrow \nP(I(\nP(V),\sO_{\nP(V)}(2))_2^{\vee})$ in the proof of Proposition \ref{prop:rank3veronese} is the embedding induced by the complete linear system $H^0(\operatorname{Gr}(2,V),\sO_{\operatorname{Gr}(2,V)}(2))$. 
	Indeed, one can see explicitly that the map $q$ is defined by quadrics in the Pl\"ucker coordinates of $\operatorname{Gr}(2,V)$, and since we showed in the proof of Proposition \ref{prop:rank3veronese} that the image is not contained in any hyperplane, it must be the map induced by a sub-linear system of $H^0(\operatorname{Gr}(2,V),\sO_{\operatorname{Gr}(2,V)}(2))$. But this is actually the complete linear system since $H^0(\operatorname{Gr}(2,V),\sO_{\operatorname{Gr}(2,V)}(2)) \cong S^{(2,2)}V \cong I(\nP(V),\sO_{\nP(V)}(2))_2$. As $\sO_{\operatorname{Gr}(2,V)}(2)$ is very ample, we conclude that the map $q$ must be an embedding. This was also proven in \cite[Corollary 1.4]{Park24} by a different method. 
\end{remark}

As a consequence of Proposition \ref{prop:rank3veronese}, we get the following (cf. \cite[Theorem 1.3]{HLMP2021}).

\begin{corollary}\label{cor:squarerank3}
Let $X$ be a projective scheme, and $H$ be a very ample line bundle on $X$ such that $X \subseteq \nP (H^0(X, H))$ is arithmetically normal and the ideal $I(X, H)$ is generated by quadrics. Then the ideal $I(X, H^{\otimes 2})$ is generated by quadrics of rank $3$. 
\end{corollary}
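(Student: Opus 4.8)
The plan is to realize the embedding of $X$ by $H^{\otimes 2}$ as a linear section of the second Veronese variety, so that $I(X,H^{\otimes 2})$ becomes the image of the ideal of that Veronese variety, which is generated by rank-three quadrics by Proposition \ref{prop:rank3veronese}. Set $V:=H^0(X,H)$, so $X\subseteq \nP(V)$, and by arithmetic normality the restriction map $q\colon S^2V=H^0(\nP(V),\sO_{\nP(V)}(2))\to H^0(X,H^{\otimes 2})=:W$ is surjective with kernel exactly $I(X,H)_2\subseteq S^2V$. Write $v_2\colon \nP(V)\hookrightarrow \nP(S^2V)$ for the second Veronese embedding and $Y:=v_2(\nP(V))$ for its image, so that $I(Y)=I(\nP(V),\sO_{\nP(V)}(2))\subseteq \Sym(S^2V)$. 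The composite $X\hookrightarrow \nP(V)\xrightarrow{v_2}\nP(S^2V)$ is the morphism attached to the linear system $W\subseteq H^0(X,H^{\otimes 2})$, which is complete again by arithmetic normality; hence it factors through the linear subspace $\nP(W)=V(I(X,H)_2)\subseteq \nP(S^2V)$ cut out by $\ker q$, and its image there is $X$ in its $H^{\otimes 2}$-embedding. Thus $v_2(X)\subseteq Y\cap \nP(W)$, and since the homogeneous ideal of a subscheme contained in a linear subspace $\nP(W)\subseteq \nP(S^2V)$ is the image of its ideal in the ambient space, we get
\[ I(X,H^{\otimes 2})=\text{image of } I\bigl(v_2(X)\subseteq \nP(S^2V)\bigr)\ \text{under}\ \Sym(S^2V)\twoheadrightarrow \Sym(W)=\Sym(S^2V)/(I(X,H)_2). \]

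Next I would compute $I(v_2(X)\subseteq \nP(S^2V))$ in terms of $I(Y)$ and $I(X,H)_2$. Since $v_2$ identifies $Y$ with $\nP(V)$ and the coordinate ring $\Sym(S^2V)/I(Y)$ with the Veronese subring $\bigoplus_{\ell\geq 0}S^{2\ell}V$, the ideal $I(v_2(X))/I(Y)$ is identified with $\bigoplus_{\ell\geq 0}I(X,H)_{2\ell}$. The hypothesis that $I(X,H)$ is generated by quadrics gives $I(X,H)_{2\ell}=S^{2\ell-2}V\cdot I(X,H)_2$, and $S^{2\ell-2}V$ is the degree-$(\ell-1)$ piece of the Veronese subring; hence $I(v_2(X))/I(Y)$ is generated in degree one by $I(X,H)_2$. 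Lifting, and using that $Y$ is nondegenerate so that $I(Y)$ has no linear part and $I(X,H)_2$ sits inside $S^2V$ as genuine linear forms of $\Sym(S^2V)$, we obtain $I(v_2(X)\subseteq \nP(S^2V))=(I(Y),\,I(X,H)_2)$. Under the quotient $\Sym(S^2V)\to \Sym(W)$ the linear forms $I(X,H)_2=\ker q$ map to zero, so $I(X,H^{\otimes 2})$ is exactly the image of $I(Y)$; in particular it is generated in degree two. Finally, Proposition \ref{prop:rank3veronese} says $I(Y)_2$ is spanned by quadrics of rank three, and the image in $S^2W$ of a rank-three quadric has rank at most three, so $I(X,H^{\otimes 2})$ is generated by quadrics of rank $\leq 3$.

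I do not expect a serious obstacle here: the only place that needs care is the bookkeeping in the first two paragraphs — verifying that the $H^{\otimes 2}$-embedding genuinely factors as a linear section of the Veronese, and that the two ideals match up under the quotient by $\ker q$ — and this is conceptually routine. The one genuinely delicate point is the distinction between rank $\leq 3$ and rank exactly $3$; forcing rank exactly three is not possible in full generality (for instance for $X$ a pair of reduced points $I(X,H^{\otimes 2})$ is generated by a rank-two quadric), but whenever $I(X,H^{\otimes 2})$ contains no quadric of rank $\leq 2$ — e.g. when $X$ is irreducible and reduced, so that a reducible quadric vanishing on $X$ already vanishes on a hyperplane through $X$ — the spanning set of rank-$\leq 3$ quadrics can be taken of rank exactly three, matching the statement.
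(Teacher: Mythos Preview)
Your proposal is correct and follows essentially the same approach as the paper: realize the $H^{\otimes 2}$-embedding of $X$ as a linear section of the second Veronese variety and then invoke Proposition~\ref{prop:rank3veronese}; the paper states this in one line, while you carefully verify that $I(v_2(X))=(I(Y),\,I(X,H)_2)$ and that its image in $\Sym(W)$ is $I(X,H^{\otimes 2})$. Your closing remark on rank~$\leq 3$ versus rank exactly~$3$ is a valid caveat that the paper does not make explicit.
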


\begin{proof}
We follow the arguments of \cite[Proof of Theorem 1.3]{HLMP2021}. Set $\nP^r := \nP (H^0(X, H))$.
Note that $X \subseteq \nP (H^0(X, H^{\otimes 2}))$ is a linear section of $v_2(\nP^r) \subseteq \nP (H^0(\nP^r, \sO_{\nP^r}(2)))$. Since the ideal $I(\nP^r, \sO_{\nP^r}(2))$ is generated by quadrics of rank $3$ by Proposition \ref{prop:rank3veronese}, it follows that the ideal $I(X, H^{\otimes 2})$ is also generated by quadrics of rank $3$. 
\end{proof}

\begin{remark}
We give an elementary proof of Corollary \ref{cor:squarerank3} under a slightly different assumption. Suppose that $A=B=H$ is a very ample line bundle on a projective scheme $X$ satisfying the conditions of Lemma \ref{lem:cohvanishingexplicit} so that $m_{H,H}^2$ is surjective. These conditions also imply that $X \subseteq \nP (H^0(X, H))$ is arithmetically normal and the ideal $I(X, H)$ is generated by quadrics. We now claim that  the ideal $I(X, H^{\otimes 2})$ is generated by quadrics of rank $3$. We denote by $m_{ij}$ the $(i,j)$-entry of $\operatorname{Cat}(H,H)$ (note that $\operatorname{Cat}(H,H)$ is symmetric so that $m_{ij}=m_{ji}$). As $I(X, H^{\otimes 2})$ is generated by $m_{ij}m_{k\ell} - m_{i \ell} m_{kj}$, it is enough to show that $m_{ij}m_{k\ell} - m_{i \ell} m_{kj}$ is a linear combination of quadrics of rank $3$ in  $I(X, H^{\otimes 2})$.
We have three cases: (1) $i=j$ and $k=\ell$, (2) $i=j$ and $k \neq \ell$ (or $i \neq j$ and $k=\ell$), (3) $i \neq j$ and $k \neq \ell$. In the case (1), we have
$$
m_{ii}m_{kk}-m_{ik}m_{ki} = m_{ii}m_{kk} - m_{ik}^2,
$$
which is already a quadric of rank $3$. In the case (2), we have\\[-20pt]

\begin{footnotesize}
$$
m_{ii}m_{k\ell} - m_{i\ell}m_{ki} =\frac{1}{2} \left[ (m_{ii}m_{kk} - m_{ik}^2) +(m_{ii}m_{\ell \ell}- m_{i\ell}^2) + \big( (m_{ik} - m_{i\ell})^2+ m_{ii}(2m_{k\ell} - m_{kk} - m_{\ell \ell}) \big) \right],
$$
\end{footnotesize}

\noindent which is a linear combination of quadrics of rank $3$ in $I(X, H^{\otimes 2})$. In the case (3), one can also explicitly write $m_{ij}m_{k\ell} - m_{i \ell} m_{kj}$ as a linear combination of quadrics of rank $3$ in $I(X, H^{\otimes 2})$. An explicit expression can be found in \cite[Proof of Theorem 3.1]{HLMP2021}. 
\end{remark}	

\begin{lemma}\label{lem:quadricsrank}
Let $X$ be a projective scheme, $A$ be a globally generated line bundle on $X$, and $B$ be a very ample line bundle on $X$ such that $I(X,B)_2$ is spanned by quadrics of rank at most $r \geq 1$. Set $L:=A \otimes B$.  If  the multiplication map
	\[ S^2H^0(X,A) \otimes I(X,B)_2 \longrightarrow I(X,L)_2\]
	is surjective, then  $I(X,L)_2$ is also spanned by quadrics of rank at most $r$.   
\end{lemma}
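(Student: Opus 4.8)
The plan is to reduce everything, by bilinearity, to a single product of the square of a section of $A$ with a quadric of rank at most $r$, and then to observe that such a product is again a quadric of rank at most $r$ lying in $I(X,L)_2$.

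First I would fix a quadric $Q\in I(X,L)_2$ and use the surjectivity hypothesis to write $Q=\sum_k s_k\cdot q_k$ with $s_k\in S^2H^0(X,A)$ and $q_k\in I(X,B)_2$. Since $\operatorname{char}(\kk)\neq 2$, the space $S^2H^0(X,A)$ is spanned by squares of sections, because $a_1a_2=\tfrac12\big((a_1+a_2)^2-a_1^2-a_2^2\big)$; and by the spanning hypothesis on $I(X,B)_2$, each $q_k$ is a linear combination of quadrics of rank at most $r$, that is, of sums of at most $r$ squares of linear forms. Expanding these expressions and collecting terms, it suffices to show: for every $a\in H^0(X,A)$ and every quadric $q\in I(X,B)_2$ of rank at most $r$, the product $a^2\cdot q$ is a quadric of rank at most $r$ that lies in $I(X,L)_2$.

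For this, write $q=\sum_{j=1}^{r'}c_j^2$ with $r'\leq r$ and $c_j\in H^0(X,B)$. Under the natural multiplication map $S^2H^0(X,A)\otimes S^2H^0(X,B)\to S^2H^0(X,L)$ --- which is induced by the multiplication map on global sections followed by symmetrization --- one has $a^2\cdot c_j^2=(ac_j)^2$, so that
\[ a^2\cdot q=\sum_{j=1}^{r'}(ac_j)^2 \]
is a sum of at most $r$ squares of linear forms on $\nP(H^0(X,L))$, hence a quadric of rank at most $r$. That $a^2\cdot q$ lies in $I(X,L)_2$ is exactly the assertion that the multiplication map in the statement has target $I(X,L)_2$; equivalently, $q$ maps to zero in $H^0(X,B^{\otimes 2})$, hence $a^2q$ maps to zero in $H^0(X,L^{\otimes 2})$. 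Combining, $Q$ is a linear combination of rank-$\leq r$ quadrics inside $I(X,L)_2$, as desired.

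I do not expect a genuine obstacle: the statement becomes essentially formal once the reduction above is set up. The only points requiring a line of care are the identity $a^2\cdot c^2=(ac)^2$ for the canonical map $S^2V\otimes S^2W\to S^2(V\otimes W)$, which is immediate from unwinding the definition of that map, and the observation that --- since we only need to span $I(X,L)_2$ --- we may freely replace the general factors $s_k$ and $q_k$ by a single square $a^2$ and a single quadric of rank $\leq r$. The hypothesis $\operatorname{char}(\kk)\neq 2$, automatic in the present setting, is used solely to ensure that $S^2H^0(X,A)$ is spanned by squares.
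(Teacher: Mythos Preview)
Your proof is correct and follows essentially the same approach as the paper: reduce by bilinearity to the case of $a^2\otimes q$ with $q=\sum_{j=1}^{r'} c_j^2$ of rank $r'\le r$, and observe that the image is $\sum_j (ac_j)^2$, again of rank at most $r$. The paper's argument is slightly terser but identical in substance.
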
 

\begin{proof}
A rank $r$ quadric $q$ in $I(X, A)_2$ can be written as $q = \beta_1^2+\cdots+\beta_r^2$ for some  $\beta_i\in H^0(X,B)$. For $\alpha^2 \in S^2 H^0(X, A)$ with $\alpha \in H^0(X, A)$, the image of $\alpha^2 \otimes q$ under the map in the lemma is a rank $r$ quadric $(\alpha \cdot \beta_1)^2 + \cdots + (\alpha \cdot \beta_r)^2$. Since $S^2 H^0(X, A)$ is spanned by $\alpha^2$ and $I(X, L)_2$ is spanned by the image of $\alpha^2 \otimes q$, the lemma follows. 
\end{proof}

Now, we are ready to prove Theorem \ref{thm:HLMP}.
	
\begin{proof}[Proof of Theorem \ref{thm:HLMP}]
Let $X$ be a projective scheme, and $A,B$ be two line bundles on $X$. We have a commutative diagram
	\[
	\xymatrixcolsep{0.8in}
	\xymatrix{
		S^2H^0(X,A) \otimes \wedge^2 H^0(X,B) \otimes \wedge^2 H^0(X,B) \ar[r]^-{s^{2}_{A,B} \otimes \id_{\wedge^2 H^0(B)}} \ar[d]_-{\id_{S^2H^0(A)} \otimes m^2_{B,B}} & \wedge^2H^0(X,A\otimes B) \otimes \wedge^2 H^0(X,B) \ar[d]^-{m^2_{A\otimes B,B}} \\
		S^2H^0(X,A)\otimes I(X,B^{\otimes 2})_2  \ar[r] & I(X,A\otimes B^{\otimes 2})_2.
	}
	\]
	Assume that the following conditions hold:
	\begin{itemize}
		\item[$(a)$]  $s^2_{A,B}$ and $m^2_{A\otimes B,B}$ are surjective. 
		\item[$(b)$]  $X \subseteq \nP(H^0(X,B))$ is arithmetically normal, and $I(X, B)$ is generated by quadrics.
		\item[$(c)$] $I(X, A\otimes B^{\otimes 2})$ is generated by quadrics.
	\end{itemize}
	The condition $(b)$ and Corollary \ref{cor:squarerank3} show that the ideal $I(X,B^{\otimes 2})$ is generated by quadrics of rank $3$. Then the condition $(a)$  and Lemma \ref{lem:quadricsrank} imply that the space $I(X,A\otimes B^{\otimes 2})_2$ is spanned by quadrics of rank $3$, and thanks to the condition $(c)$, these quadrics generate the whole ideal $I(X,A\otimes B^{\otimes 2})$. Hence we only need to verify the conditions $(a), (b), (c)$. 
	
\medskip	
	
\noindent (First Part) If $A,B$ are sufficiently ample, then the condition $(a)$ is guaranteed by Corollary \ref{cor:cohvanishingexplicit}, and conditions $(b)$ and $(c)$ are guaranteed by  Lemma \ref{lem:suffamplelemma} (8). 
	
\medskip

\noindent (Second Part) Assume that $X$ is a smooth projective variety of dimension $n$, fix a very ample line bundle $H$ and a nef line bundle $M$ on $X$, and set $A:= \omega_X\otimes H^{\otimes j_1}\otimes M$ and $B:=\omega_X \otimes H^{\otimes j_2}$ for $j_1=j_2=n+2$. Then the condition $(a)$ holds because of Corollary \ref{cor:cohvanishingexplicit} and conditions $(b)$ and $(c)$ hold because of \cite[Theorem 2.1]{EL1993}.  The previous reasoning shows that the ideal $I(X,L)$ is generated by quadrics of rank $3$, where $L:=A\otimes B^{\otimes 2}$.
\end{proof}

Finally, we recover \cite[Theorem 1.1]{HLMP2021}. 

\begin{corollary}\label{cor:idealVeronese}
The ideal $I(\nP^n, \sO_{\nP^n}(d))$ is generated by quadrics of rank 3 for $d \geq 2$. 
\end{corollary}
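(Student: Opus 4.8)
The plan is to obtain Corollary \ref{cor:idealVeronese} as a direct specialization of Theorem \ref{thm:HLMP} together with Proposition \ref{prop:rank3veronese}. The case $d=2$ is exactly Proposition \ref{prop:rank3veronese}, so the only thing left is to deduce the cases $d\geq 3$ from the effective second statement of Theorem \ref{thm:HLMP}.

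For $d\geq 3$, I would apply the second part of Theorem \ref{thm:HLMP} to $X=\nP^n$ with the very ample line bundle $H=\sO_{\nP^n}(1)$ and the nef line bundle $M=\sO_{\nP^n}(d-3)$ (nef since $d-3\geq 0$). Because $\omega_{\nP^n}\cong\sO_{\nP^n}(-n-1)$, the line bundle prescribed by the theorem is
\[ \omega_{\nP^n}^{\otimes 3}\otimes H^{\otimes(3n+6)}\otimes M \;\cong\; \sO_{\nP^n}\big(-3(n+1)+(3n+6)+(d-3)\big)\;=\;\sO_{\nP^n}(d), \]
so Theorem \ref{thm:HLMP} shows that $I(\nP^n,\sO_{\nP^n}(d))$ is generated by quadrics of rank $3$. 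Combined with the case $d=2$, the ideal $I(\nP^n,\sO_{\nP^n}(d))$ is generated by quadrics of rank $3$ for every $d\geq 2$, which is the assertion.

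The main point is that there is essentially no obstacle here beyond the line-bundle bookkeeping above: all the real content sits in Theorem \ref{thm:HLMP} (and in Proposition \ref{prop:rank3veronese} for the base case). If one instead wanted a proof avoiding the effective bound of Theorem \ref{thm:HLMP}, one could argue by induction on $d$, splitting into the even case $d=2e$ with $e\geq 2$ — where Corollary \ref{cor:squarerank3}, applied with $H=\sO_{\nP^n}(e)$ (using the classical facts that the $e$-uple Veronese is projectively normal and has ideal generated by quadrics), yields the result — and the odd case $d=2e+1$, where Lemma \ref{lem:quadricsrank} with $A=\sO_{\nP^n}(1)$ and $B=\sO_{\nP^n}(2e)$ reduces everything to the surjectivity of the multiplication map $S^2H^0(\nP^n,\sO_{\nP^n}(1))\otimes I(\nP^n,\sO_{\nP^n}(2e))_2\to I(\nP^n,\sO_{\nP^n}(2e+1))_2$, together with the observation that, $v_d(\nP^n)$ being linearly nondegenerate with prime ideal, no quadric in $I(\nP^n,\sO_{\nP^n}(d))$ has rank $\leq 2$, so the rank-$\leq 3$ spanning quadrics produced are automatically of rank exactly $3$. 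That surjectivity would be the only genuinely nontrivial point of the alternative route, and I would verify it either by decomposing both sides into $\GL_{n+1}$-representations and applying Pieri's rule to the plethysm $S^2(S^{e}V)$, or via the binomial/toric description of the Veronese ideal (reducing an arbitrary quadratic binomial relation $z_\alpha z_\beta-z_\gamma z_\delta$ with $\alpha\beta=\gamma\delta$ to a telescoping sum of relations pulled up from $\sO_{\nP^n}(2e)$).
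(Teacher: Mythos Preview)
Your proof is correct and follows exactly the paper's own argument: the paper's proof reads ``It is immediate from Proposition~\ref{prop:rank3veronese} for $d=2$ and Theorem~\ref{thm:HLMP} for $d \geq 3$,'' and you have simply made the line-bundle bookkeeping for the second part of Theorem~\ref{thm:HLMP} explicit. The alternative inductive route you sketch is not needed (and the paper does not pursue it), but your main argument matches the paper's.
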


\begin{proof}
It is immediate from Proposition \ref{prop:rank3veronese} for $d=2$ and Theorem \ref{thm:HLMP} for $d \geq 3$.
\end{proof}

\begin{remark}\label{rem:positivechar}
We argue that Theorem \ref{thm:HLMP} holds for projective schemes $X$ over an algebraically closed field $\kk$ with $\operatorname{char}(\kk)\ne 2$. This fully confirms the Han--Lee--Moon--Park conjecture \cite[Conjecture 6.1]{HLMP2021}. Indeed, all results and arguments in this section are valid as long as $\operatorname{char}(\kk) \neq 2,3$ except for Corollary \ref{cor:cohvanishingexplicit} (2) and the second part of Theorem \ref{thm:HLMP}. If $\operatorname{char}(\kk)  = 3$, then \cite[Theorem 1.2]{HLMP2021} shows that Proposition \ref{prop:rank3veronese} does not hold  (in this case, $I(\nP(V), \sO_{\nP(V)}(2))_2$ is no longer an irreducible representation of $\operatorname{GL}(V)$, as was explained to us by Claudiu Raicu). However, it was recently shown in \cite{VerChar3}  that the ideal $I(\nP^n, \sO_{\nP^n}(3))$ is generated by quadrics of rank 3 in characteristic 3. As a consequence, $I(X, H^{\otimes 3})$ is generated by quadrics of rank 3 whenever $H$ is a sufficiently ample line bundle on $X$. As in the proof of Theorem \ref{thm:HLMP}, one can verify that the map
$$
S^2 H^0(X, A) \otimes I(X, B^{\otimes 3})_2 \longrightarrow I(X, A \otimes B^{\otimes 3})_2
$$
is surjective for sufficiently ample line bundles $A,B$ on $X$. This implies that  the first statement of Theorem \ref{thm:HLMP} holds in characteristic 3 as well. Furthermore, if $X = \nP^n$ and  $\operatorname{char}(\kk) \neq 2$, then Corollary \ref{cor:cohvanishingexplicit} (2) and Corollary \ref{cor:idealVeronese} ($d \geq 3$ in characteristic 3) are also true as one can verify all required cohomology vanishing on $\nP^n$.
\end{remark}

\bibliographystyle{ams}

\end{document}